\newcommand \D{{\mathbb{D}}}
\newcommand \Oo{\mathcal{O}}
\newcommand \ov{\overline}
\newcommand \A{\mathcal{A}}
\newcommand \N{\mathbb{N}}
\newcommand \Z{\mathbb{Z}}
\newcommand \Sa{\mathcal{S}}
\newcommand \C{\mathbb{C}}
\newcommand \R{\mathbb{R}}
\newcommand \HH{\mathcal{H}}
\newcommand \K{\mathcal{K}}
\newcommand \di{\mathrm{dim}}
\newcommand \Om{\Omega}
\newcommand \om{\omega}
\newcommand \eps{\epsilon}
\newcommand \intK{\overset{\circ}{K}}
\numberwithin{equation}{section}
\newtheorem{theorem}{Theorem} 
\newtheorem{lemma} [theorem]{Lemma} 
\newtheorem{remark} [theorem]{Remark} 
\newtheorem{proposition} [theorem]{Proposition} 
\theoremstyle{definition} 
\newtheorem{definition}[theorem] {Definition}
\DeclareMathOperator{\acosh}{arccosh}
\title{Computation of harmonic functions on higher genus surfaces}
\author{Micka\"{e}l Nahon\thanks{ Univ. Grenoble Alpes, CNRS, Grenoble INP, LJK, 38000 Grenoble, France.}, Édouard Oudet\thanks{ Univ. Grenoble Alpes, CNRS, Grenoble INP, LJK, 38000 Grenoble, France.}}
\date{}
\begin{document}

\maketitle%

\begin{abstract}
We extend a classical approximation result of harmonic functions in planar domains due to Bernstein and Walsch to the setting of harmonic functions in Riemann surfaces. This result gives an exact characterization of the rate at which a harmonic function in a subdomain of a compact Riemann surface may be approached by globally defined harmonic functions with prescribed poles. We illustrate the effectiveness and the impact of the method solving general boundary value Laplace problems in subdomains of the surface; we lay the groundwork for this numerical method in Riemann surfaces represented by a gluing of hyperbolic polygons. In particular, we give a general approximation procedure that computes this basis efficiently with arbitrary precision.
\end{abstract}

\renewcommand{\thefootnote}{\fnsymbol{footnote}} 
\footnotetext{\emph{Keywords}: Harmonic functions, Laplace equation, Higher genus, Abel-Jacobi, Bernstein-Walsch}     
\footnotetext{\emph{2020 Mathematic Subject Classification} 30F15, 31A25, 31C12, 35C10, 65N15, 65N80}     
\renewcommand{\thefootnote}{\arabic{footnote}} 
\tableofcontents


\section{Introduction}

Harmonic functions are a ubiquitous class of functions that appear in potential theory, fluid dynamics, heat conduction, and more generally in solutions of various optimization problems. We are concerned with the following question: given a compact  Riemannian  surface $K$ with boundary $\partial K$, and a function $g:\partial K\to\R$, how can we approximate precisely and efficiently the harmonic extension
\begin{equation}\label{eq_harmoniqueplan}
u: K\to\R\text{ such that }\begin{cases}\Delta u=0&\text{ in } K\\ u=g & \text{ in }\partial K\end{cases}\ ?
\end{equation}
A classical approach providing an accuracy beyond finite element methods is the method of particular solutions. This method has seen significant developments over the past decade, both in terms of its stability and generality (see for instance \cite{BT05, strohmaier2013algorithm}):

For $K$ a smooth compact subset of $\C$, we denote by $(D_j)_{j=0,\hdots,c}$ the connected component of $\widehat{\C}\setminus K$ (where $\widehat{\C}=\C\cup\{\infty\}$ is the Riemann sphere), with $\infty\in D_0$ by convention, and let $w_j\in D_j$ a choice of pole (with $w_0=\infty$ by convention). Runge's theorem states that any harmonic function on $K$ may be approached as $n\to +\infty$ by functions of the form
\begin{equation}\label{eq_logarithmicconjugation}
a+\Re\sum_{k=1}^{n}a_{0,k}z^k+\sum_{j=1}^{c}a_{j,0} \log|z-w_j|+\Re\sum_{j=1}^{c}\sum_{k=1}^{n}a_{j,k}(z-w_j)^{-k}
\end{equation}
for some real coefficients $a\in\R$, $(a_{j,0})_{1\leq j\leq c}$ and complex coefficients $(a_{j,k})_{0\leq j\leq c,1\leq k\leq n}$. By matching the boundary data (for instance through a least square method for a sampling of the boundary) this gives an efficient method of approximation (this is discussed in \cite{T18} for the approximation of Green functions). What makes this method particularly effective is that for general analytic boundary data one may expect a spectral speed of convergence. More precisely, there is a sequence of functions $(u_n)_{n\in\N}$ of the form \eqref{eq_logarithmicconjugation} that approaches the solution $u$ of \eqref{eq_harmoniqueplan} with
\[\limsup_{n\to\infty}\Vert u-u_n\Vert_{L^\infty(K)}^{1/n}=q\]
where $q(< 1)$ depends on the extension properties of the solution $u$.

In the case where $c=0$ (i.e. $\C\setminus K$ is connected and $u_n$ is a harmonic polynomial of degree at most $n$), the full characterization of $q$ is given by  a generalization of the Bernstein-Walsch theorem (see \cite[Ch. VII]{W35} or the more modern reference for potential theory in the plane \cite[Th 6.3.1]{R95}) to harmonic function: for a \textit{regular enough} set $K$, $q$ is the smallest value for which $u$ extends harmonically to \[\left\{z\in\C:\mathcal{G}_{K,\infty}(z)<\log(1/q)\right\}\]
where $\mathcal{G}_{K,\infty}$ is the potential function of the set $K$ with a logarithmic pole at infinity, as defined in equations \eqref{eq_green} (with $X=\C\cup\{\infty\}$, $w=\infty$ in this case). This result was proved in \cite{walsh1937maximal} when $\partial K$ is a finite disjoint union of Jordan curves, and in \cite[Prop. 3]{NguyenDjebbar} under weaker regularity hypothesis.

The Bernstein-Walsch theorem has been generalized in several directions. Generalizations to holomorphic functions in several complex variables may be found in \cite[§10]{S62}. In \cite[Th 1.2]{BL07}, it is proved that if $K\subset\R^d$ is a compact set such that $\R^d\setminus K$ is connected and verifies some weak regularity hypothesis (i.e. or every point of the boundary being accessible by analytic arcs), then the Bernstein-Walsch theorem for harmonic functions still holds. See also  \cite[Th. 2]{A93} for an extension to John domains, and \cite[Th. B]{Lemniscate} for multiply connected domains in the plane.

Coming back to harmonic functions in two dimensions, this approach has been adapted (theoretically and numerically) to domains with less regularity, for instance in the series of papers \cite{GT19_pnas,GT19} in domains with angular points, where an appropriate positioning of the poles near the corners give $\mathcal{O}(e^{-c\sqrt{n}})$ approximation error. This was also applied to two-dimensional Stokes equation in \cite{BT22}. See also the more recent work \cite{T24}, that explores the rate of convergence in smooth non-convex domains through a careful positioning of the poles.

This method of particular solution was generalized in \cite{KOO23} to the case where $K$ is a subset of a torus, where the authors constructed a similar basis of solutions as in equation \eqref{eq_logarithmicconjugation} using the Weierstrass elliptic functions, which may be computed efficiently with high accuracy; although the authors demonstrate numerically the same spectral speed of convergence as in subsets of the plane, there were no theoretical results to support this.

The goal of this paper is twofold:
\begin{itemize}
\item[1) ] We generalize this method of particular solutions to any compact surface written as a gluing of polygons: suppose that $K$ is a compact subset of some compact surface (without boundary) $X$, we provide a method to compute efficiently with arbitrary precision a basis of harmonic functions on $X$ (with some prescribed poles in $X\setminus K$) that approximates any harmonic function on $K$. Since this is already known when $X$ is the Riemann sphere or a torus, we only consider the case where $X$ is an oriented closed surface of genus $g\geq 2$ ; note that the non-oriented case reduces to the oriented case by taking the oriented double cover.

In terms of numerical efficiency, our goal is that for a fixed surface $X$, we may build once and for all the building blocks to compute directly a basis of harmonic function for any location of the poles: as we will see, our method requires the computation of $g$ holomorphic functions that act as the coordinates of the Abel-Jacobi map.
\item[2) ] We prove a complete analysis of the rate at which this basis approximates a harmonic function defined on a sufficiently regular compact subset $K$ of the surface $X$, depending on its maximal domain of extension outside $K$. 
\end{itemize}

\subsection{Main result}

The construction of the basis is as follows:
\begin{itemize}
\item[1) ] We construct the Abel-Jacobi map $X\to\C^{g}/(\Z^g+\tau\Z^g)$, which is a holomorphic immersion of the surface in the complex torus $\C^g/(\Z^g+\tau\Z^g)$ (where $\tau$ is a certain symmetric matrix with positive imaginary part). While there seem to be no explicit way of constructing these functions (at least through a procedure that lends itself to fast evaluation), we give an elementary least-square method that provides us with a polynomial approximation of the Abel-Jacobi map on each hexagon of the decomposition. This step is independent of the location of the poles, meaning that for a given surface we only need to compute this function once to obtain the basis associated to any pole. This part is heavily dependent on the choice of representation of $X$, whereas the functions we built below are obtained as composition with the Abel-Jacobi map, meaning it is independent of the representation of $X$ so long as we can evaluate the Abel-Jacobi map.
\item[2) ] We construct the Green function associated to any two poles $v,w$, meaning a harmonic function $\log|\widehat{\sigma}_{v,w}|:X\setminus\{v,w\}\to \R$ such that

\[\log|\widehat{\sigma}_{v,w}(z)|=\log|d_X(z,v)|+\Oo_{z\to v}(1),\ \log|\widehat{\sigma}_{v,w}(z)|=-\log |d_X(z,w)|+\Oo_{z\to w}(1).\]
Here $d_X$ is the geodesic distance induced by the metric.
\item[3) ]Finally, for a given generic pole $w$ (in the sense that it is not a Weierstrass point, as is explained in subsection \ref{subsec_intro_weiestrass}), we construct a sequence of functions
\[(\widehat{\wp}_{n}(z,w))_{1\leq n\leq g},\ (\widecheck{\wp}_{n}(z,w))_{g+1\leq n\leq  2g+1},\ (\tilde{\wp}_{n}(z,w))_{n\geq 2g+2}\]
that may respectively be found in the subsections \ref{subsec_phat}, \ref{subsec_pbar}, \ref{subsec_ptilde}, such that for fixed $w$, the $n$-th function of the sequence above is harmonic with respect to $z$ with a unique pole at $w$ of order $n$. More precisely, up to a multiplicative constant, they have the following asymptotic form near $w$:
\[\underbrace{\frac{1}{(z-w)^n}+\mathcal{O}_{z\to w}\left(\frac{1}{(z-w)^{n-1}}\right)}_{\text{meromorphic}}+\underbrace{\mathcal{O}_{z\to w}(1)}_{\text{harmonic}}.\]
When $X$ is the Riemann sphere  $\widehat{\C}$, this is the role played by
\[z\mapsto \frac{1}{(z-w)^{n}}\]
which is meromorphic. Observe however that there is an obstruction to this property already in genus $1$ when $X=\C/(\Z+\tau \Z)$ for some $\tau\in\C\setminus\R$. Indeed, a residue theorem on a fundamental domain (i.e. a parallelogram $0,1,1+\tau,\tau$) shows that there is no meromorphic function with a unique pole of order $1$, and more obstructions appear in higher genus, as is summarized in lemma \ref{lemma_wei}.
\item[4) ]The case where $w$ is a Weierstrass point - something which is only possible for a finite number of points of the surface, see subsection \ref{subsec_intro_weiestrass} - needs only a small tweaking in the construction of the functions to work: the basis is instead
\[(\widehat{\wp}_n)_{1\leq n\leq 2g-1},\ (\widecheck{\wp}_n)_{2g\leq n\leq 4g-1},\ (\tilde{\wp})_{n\geq 4g},\]
where the functions are constructed similarly. We refer to the subsection \ref{subsec_weiestrass} for the necessary modifications.
\end{itemize}

For any finite set $ \mathcal{P}\subset X$, $n\geq 1$, we denote by $\HH_n(X\setminus\mathcal{P})$ the set of harmonic function $h:X\setminus \mathcal{P}\to \R$ such that 
\begin{equation}\label{eq_defHH}
h(z)=\mathcal{O}_{z\to w}\left( d_X(z,w)^{-n}\right)
\end{equation}
for each $w\in \mathcal{P}$.

\begin{proposition}\label{mr_finitepoles}
Let $X$ be an oriented compact surface of genus $g\geq 2$. Let $w_1,\hdots,w_N$ be non-Weierstrass points of $X$, then $\HH_n(X\setminus\{w_1,\hdots,w_N\})$ admits the following basis functions: the constant function, the functions
\[\log\left|\widehat{\sigma}_{w_j,w_{j+1}}\right|,\ j=1,\hdots,N-1\]
and the real and imaginary parts of
\[ (\widehat{\wp}_k(\cdot,w_j))_{1\leq k\leq g\wedge n},\ (\widecheck{\wp}_k(\cdot,w_j))_{g+1\leq k\leq (2g+1)\wedge n},\ (\tilde{\wp}_k(\cdot,w_j))_{2g+2\leq k\leq n},\ j=1,\hdots,N.\]
\end{proposition}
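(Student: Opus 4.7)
The plan is to check membership, linear independence, and spanning separately.

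\textbf{Membership.} The constant is harmonic on $X$; each $\log|\widehat{\sigma}_{w_j,w_{j+1}}|$ is harmonic on $X\setminus\{w_j,w_{j+1}\}$ with logarithmic, hence $\mathcal{O}(d_X^{-n})$, singularities; and each of $\widehat{\wp}_k(\cdot,w_j)$, $\widecheck{\wp}_k(\cdot,w_j)$, $\tilde{\wp}_k(\cdot,w_j)$ is by construction harmonic on $X\setminus\{w_j\}$ with a unique pole of order $k \leq n$ at $w_j$. Hence all listed functions and their real and imaginary parts lie in $\HH_n(X\setminus\{w_1,\ldots,w_N\})$.

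\textbf{Linear independence.} Look at the Laurent-type expansion at $w_j$ of an assumed vanishing linear combination. Only the functions attached to $w_j$ contribute to the singular part there. By construction the coefficient of $(z-w_j)^{-k}$ in the $k$-th $\wp$-function is nonzero, so its real and imaginary parts span the real plane $\{\Re(a(z-w_j)^{-k}) : a \in \C\}$. A descending induction on $k$ from $n$ to $1$ then forces all $\wp$-coefficients at $w_j$ to vanish. Afterwards the log residue at $w_j$ reads $\mu_j - \mu_{j-1}$ (with $\mu_0 = \mu_N = 0$), so iteratively all $\mu_j$ vanish; the constant coefficient must then be zero as well.

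\textbf{Spanning.} Let $h \in \HH_n(X\setminus\{w_1,\ldots,w_N\})$. Fourier decomposition in a punctured neighborhood of $w_j$ gives
\[
h(z) = \Re\!\left(\sum_{k=1}^n \frac{a_{j,k}}{(z-w_j)^k}\right) + c_j \log|z-w_j| + h_{j,\mathrm{reg}}(z),
\]
with $h_{j,\mathrm{reg}}$ bounded near $w_j$. The decisive ingredient is the residue identity $\sum_{j=1}^N c_j = 0$: applying Stokes' theorem to the closed $1$-form $\ast dh$ on $X$ minus small disks $D_j$ around the $w_j$, the global integral vanishes since $d\!\ast\! dh = \Delta h\,dA = 0$, and on each $\partial D_j$ only the logarithmic part contributes $2\pi c_j$ (the meromorphic principal part is locally the real part of a meromorphic function, whose conjugate harmonic function is single-valued on the punctured disk and therefore integrates to zero around $\partial D_j$). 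This identity makes the triangular system for the $\mu_j$ consistent, so $h_1 := h - \sum_{j=1}^{N-1} \mu_j \log|\widehat{\sigma}_{w_j,w_{j+1}}|$ has no logarithmic singularity. Now at each $w_j$ independently (by the unique-pole property of the $\wp$-functions), eliminate the meromorphic principal parts by descending induction on $k$: at each order, subtract the appropriate real combination of the real and imaginary parts of the corresponding $\widehat{\wp}_k(\cdot,w_j)$, $\widecheck{\wp}_k(\cdot,w_j)$ or $\tilde{\wp}_k(\cdot,w_j)$ to kill the order-$k$ coefficient, which only affects lower orders at $w_j$ and nothing elsewhere. The remainder is harmonic on the whole compact $X$, hence constant, and is absorbed by the constant basis function.

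\textbf{Main obstacle.} The core technical point is the residue identity $\sum_j c_j = 0$ together with the verification that the high-order meromorphic principal parts contribute nothing to the boundary integrals $\int_{\partial D_j}\!\ast\, dh$; once that is in hand, the proof reduces to a descending triangular elimination, structurally analogous to the classical Runge-type argument behind the planar expansion~\eqref{eq_logarithmicconjugation}.
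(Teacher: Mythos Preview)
Your proof is correct and follows the same underlying strategy as the paper's: expand $h$ near each pole, then perform a triangular elimination of the singular parts using the leading-order behaviour of the $\wp$-functions and the $\log|\widehat{\sigma}|$-functions, leaving a globally bounded harmonic function which is therefore constant.

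Your write-up is actually more complete than the paper's in two respects. First, the paper only proves spanning; it never checks linear independence, which you handle cleanly by the same descending induction on the order of the singularity. Second, the paper simply asserts the existence of coefficients $b_{j,0}$ for the $N-1$ logarithmic terms without explaining why $N-1$ functions suffice to kill $N$ logarithmic residues; you correctly isolate the residue identity $\sum_j c_j=0$ (via Stokes on $\ast dh$, noting that the meromorphic principal parts have single-valued harmonic conjugates and hence contribute nothing) as the missing ingredient that makes the triangular system consistent. This is exactly the point the paper elides.

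One cosmetic remark: in your linear-independence step, you should also observe that the functions attached to poles $w_i$ with $i\neq j$ are smooth at $w_j$ (which you use implicitly when you say ``only the functions attached to $w_j$ contribute to the singular part there''); this is clear from the construction but worth stating once.
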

In the case where some  points $(w_j)_{j=1,\hdots,N}$ are Weierstrass points, the result still holds with the modified basis.

To state the next theorem, we introduce the potential function of a compact set $K\varsubsetneq X$. Let $E\subset X$ be a subset of $X$, we say $E$ is \textit{polar} when $E$ has zero capacity i.e. there is a sequence of functions in $\mathcal{C}^{\infty}_c(X\setminus\ov{E},\R)$ that converges to the constant function $1$ in the strong $H^1(X,\R)$ topology. A property verified everywhere outside a polar set is said to hold \textit{nearly everywhere}. When a compact set $K$ is not polar, then for every $w\in X\setminus K$ there exists a unique potential function
$$\mathcal{G}_{K,w}:X\to \R_+\cup\{+\infty\}$$
such that for every $w\in X\setminus K$, $z\in X\setminus \{w\}\mapsto \mathcal{G}_{K,w}(z)$ is subharmonic (i.e. upper semicontinuous with nonnegative Laplacian in the distributional sense) and we have
\begin{equation}\label{eq_green}
\begin{split}
\Delta \mathcal{G}_{K,w}&=0\text{ in }X\setminus (K\cup\{w\})\\
\mathcal{G}_{K,w}(z)&=-\log d_X(z,w)+\mathcal{O}_{z\to w}(1)\\
\mathcal{G}_{K,w}(z)&\underset{z\to z_0}{\longrightarrow}0\text{ for nearly every }z_0\in K 
\end{split}
\end{equation}
When $\mathcal{P}$ is a finite subset of $X\setminus K$, we will define
$$\mathcal{G}_{K,\mathcal{P}}(z):=\sum_{w\in\mathcal{P}}\mathcal{G}_{K,w}(z).$$
See \cite[sec 4.4]{R95} for more detail on the existence and uniqueness of potential functions, which still applies in this setting. In particular when the boundary of $K$ is smooth, then $\mathcal{G}_{K,w}$ is continuous and identically zero on $K$. For less regular sets, we introduce the notion of \textit{(non-)thinness}: a set $E\subset X$ is said to be non-thin at $z_0\in X$ when for any subharmonic function $h$ defined in a neighbourhood $U$ of $z_0$, we have
$$h(z_0)\leq \limsup_{z\to z_0,\ z\in U\cap E\setminus \{z_0\}}h(z).$$
A compact set $K$ is non-thin at every point of $K$ under very weak geometrical conditions, for instance when no connected component of $K$ is reduced to a point (see \cite[Th 4.2.2]{R95}). 

If a compact set $K$ is non-thin at a point $z_0\in K$, and $w\in X\setminus K$, then $\mathcal{G}_{K,w}$ is continuous at $z_0$ with value $0$. This is an equivalence when $K$ is connected, and several equivalent characterizations of thinness may be found in \cite[Th 4.2.4, Th. 4.4.9]{R95}. 



Our main result \ref{mr_bernsteinWalsch} below deals with the approximation of harmonic function on compact subsets $K\subset X$ by harmonic functions with singularities in a finite set $\mathcal{P}\subset X\setminus K$. We define our hypothesis on the triplet $(X,K,\mathcal{P})$ as follows:
\begin{definition}\label{def_hypothesis}
Given a compact surface $X$, a compact subset $K\subset X$ that is neither empty or $X$, and a finite set $\mathcal{P}\subset X\setminus K$, we say $(X,K,\mathcal{P})$ verifies the hypothesis
\begin{itemize}[label=\textbullet]
\item $\mathcal{R}^{\mathrm{weak}}$ if $X\setminus K$ has a finite number of connected components, $\mathcal{P}$ intersects every connected component of $X\setminus K$, and every point of $K$ is non-thin in $K$.
\item $\mathcal{R}^{\mathrm{strong}}$ if $X\setminus K$ has a finite number of connected components, $\mathcal{P}$ intersects every connected component of $X\setminus K$, and every point of $K$ is non-thin in the \textbf{interior} of $K$.
\end{itemize}
\end{definition}

As the name suggests, the hypothesis $\mathcal{R}^{\mathrm{strong}}$ implies $\mathcal{R}^{\mathrm{weak}}$, and the opposite is not true, as may be seen from considering any connected compact set $K$ with empty interior (such that $X\setminus K$ has a finite number of connected components) that is not reduced to a point. 


\begin{theorem}\label{mr_bernsteinWalsch}
Let $X$ be a compact surface, let $K$ be a compact subset of $X$, $\mathcal{P}$ a finite subset of $X\setminus K$ such that $(X,K,\mathcal{P})$ verify the hypothesis $\mathcal{R}^{\mathrm{weak}}$. Let $h:K\to \R$, then
\begin{itemize}
\item[(a) ]If $h$ extends harmonically to $\left\{\mathcal{G}_{K,\mathcal{P}}<t\right\}$ for some $t>0$, then $$\limsup_{n\to\infty}\inf_{h_n\in\HH_n\left(X\setminus\mathcal{P}\right)}\Vert h-h_n\Vert_{L^\infty(K)}^{1/n}\leq e^{-t}.$$
\item[(b) ]If $(X,K,\mathcal{P})$ verifies the hypothesis $\mathcal{R}^{\mathrm{strong}}$ and $$\limsup_{n\to\infty}\inf_{h_n\in\HH_n\left(X\setminus\mathcal{P}\right)}\Vert h-h_n\Vert_{L^\infty(K)}^{1/n}\leq e^{-t}$$
for some $t>0$, then $h$ extends harmonically to $\left\{\mathcal{G}_{K,\mathcal{P}}<t\right\}$.
\end{itemize}
\end{theorem}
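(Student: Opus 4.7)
My plan is to derive both parts from a single Bernstein--Walsch-type estimate on the building blocks of $\HH_n(X\setminus\mathcal{P})$. The first step will be to establish the following key inequality: there is a constant $C=C(X,K,\mathcal{P})$, independent of $n$, such that
\begin{equation}\tag{BW}
|h_n(z)|\leq C\,\|h_n\|_{L^\infty(K)}\,e^{n\mathcal{G}_{K,\mathcal{P}}(z)}
\end{equation}
for every $h_n\in\HH_n(X\setminus\mathcal{P})$ and every $z\in X\setminus\mathcal{P}$. To prove (BW) I will exploit the explicit structure provided by Proposition \ref{mr_finitepoles}: after fixing a base-point $z_0\in X\setminus(K\cup\mathcal{P})$ and requiring normalization there, every $h_n$ decomposes uniquely as $h_n=\Re f_n+\sum_{j=1}^{N-1}c_j\log|\widehat\sigma_{w_j,w_{j+1}}|$, where $f_n$ is a single-valued meromorphic function on $X$ with poles of order $\leq n$ at $\mathcal{P}$ (vanishing at $z_0$) and the real constants $c_j$ are bounded linearly by $\|h_n\|_{L^\infty(K)}$ (by linear independence of the log-basis modulo meromorphic remainders). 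Classical Bernstein--Walsch for the holomorphic $f_n$, combined with the pointwise bound $|\log|\widehat\sigma_{w_j,w_{j+1}}(z)||\leq \mathcal{G}_{K,\mathcal{P}}(z)+O(1)$, then yields (BW). The weak non-thinness hypothesis enters critically here to ensure $\mathcal{G}_{K,\mathcal{P}}\equiv 0$ on $K$, so that the maximum principle can be applied without boundary obstructions.

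\textbf{Part (a).} Suppose $h$ extends harmonically to $\Omega_t:=\{\mathcal{G}_{K,\mathcal{P}}<t\}$. Fix $s\in(0,t)$; by Sard's theorem I can choose $s'\in(s,t)$ such that $\Gamma_{s'}:=\{\mathcal{G}_{K,\mathcal{P}}=s'\}$ is a finite union of smooth Jordan curves enclosing a compact neighborhood of $K$, on which $h$ is harmonic. I then expand $h$ in a Laurent-type series against the basis from Proposition \ref{mr_finitepoles}, $h=\sum_n c_n e_n$, with each $e_n$ normalized in $\HH_n$. The coefficients $c_n$ are extracted as integrals of $h$ against a dual family supported on $\Gamma_{s'}$, and (BW) applied to these duals (themselves of $\HH_n$-type with respect to a companion pole set in $\{\mathcal{G}_{K,\mathcal{P}}>s'\}$) gives $|c_n|\,\|e_n\|_{L^\infty(K)}\leq C\|h\|_{L^\infty(\Gamma_{s'})}e^{-ns'}$. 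Truncating at level $n$ yields $h_n\in\HH_n$ with $\|h-h_n\|_{L^\infty(K)}=O(e^{-ns'})$; letting $s'\nearrow t$ closes part (a).

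\textbf{Part (b).} Now assume the $\mathcal{R}^{\text{strong}}$ hypothesis together with the approximation rate bounded by $e^{-t}$. For any $s<t$, choose $h_n\in\HH_n$ with $\|h-h_n\|_{L^\infty(K)}\leq e^{-ns}$ for $n$ large, so that $\|h_{n+1}-h_n\|_{L^\infty(K)}\leq 2e^{-ns}$. Applying (BW) to $h_{n+1}-h_n\in\HH_{n+1}$ gives
\[
|h_{n+1}(z)-h_n(z)|\leq 2C\,e^{-ns}\,e^{(n+1)\mathcal{G}_{K,\mathcal{P}}(z)}.
\]
On any compact $L\Subset\Omega_s$ this bound decays geometrically in $n$, so $h_n$ converges uniformly on $L$ to a harmonic function $\tilde h$ on $\Omega_s\setminus\mathcal{P}=\Omega_s$ (as $\mathcal{G}_{K,\mathcal{P}}=+\infty$ on $\mathcal{P}$). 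Uniform convergence $h_n\to h$ on $K$, continuity of $\tilde h$ on $\Omega_s$, and the inclusion $K\subset\overline{K^\circ}$ (guaranteed by the strong hypothesis) together force $\tilde h|_K=h$. Varying $s<t$ produces the extension to $\Omega_t$.

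\textbf{Main obstacle.} The hard part will be establishing (BW) uniformly in $n$: on a surface of genus $g\geq 2$, a real harmonic function generically has a multi-valued harmonic conjugate, so the complexification $h_n+ih_n^*$ that underlies the planar version of the theorem is not single-valued on $X\setminus\mathcal{P}$. Disentangling the periods—that is, isolating the contributions $\log|\widehat\sigma_{w_j,w_{j+1}}|$, which carry all the monodromy but whose pole orders are independent of $n$, from the single-valued meromorphic remainder, where classical Bernstein--Walsch applies—is the critical technical step, and it is precisely the decomposition provided by Proposition \ref{mr_finitepoles} that makes it tractable.
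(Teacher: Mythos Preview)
Your central inequality (BW) is false as stated, and the paper explicitly gives the counterexample at the start of subsection \ref{subsec_holotoharmo}: take $X=\widehat{\C}$, $K=[-1,1]$, $\mathcal{P}=\{\infty\}$, and $h_n(z)=\lambda\,\Im(z)$. This lies in $\HH_1$, vanishes on $K$, but is unbounded off $K$ as $\lambda\to\infty$; no bound of the form $|h_n(z)|\leq C\|h_n\|_{L^\infty(K)}e^{n\mathcal{G}_{K,\mathcal{P}}(z)}$ can hold. The obstruction is precisely that the restriction map $\HH_n\to L^\infty(K)$ need not be injective when $K$ has empty interior. The paper salvages a weakened version (Lemma \ref{lem_harmonicgrowth}, with an extra $\delta$ in the exponent) only under $\mathcal{R}^{\mathrm{strong}}$, by passing to the meromorphic function $\partial_z h/\om$ and using an interior approximation $K_{-\eps}\uparrow K$. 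Your part (b) is close in spirit to the paper's argument and would go through once (BW) is replaced by this lemma, but your part (a) relies on (BW) under only $\mathcal{R}^{\mathrm{weak}}$, where no such estimate is available; this is a genuine gap, not a technicality.

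A second gap is your decomposition $h_n=\Re f_n+\sum_j c_j\log|\widehat{\sigma}_{w_j,w_{j+1}}|$ with $f_n$ single-valued meromorphic on $X$. In genus $g\geq 1$ this is incomplete: after subtracting the logarithmic terms (which kill the residues of $\partial_z h_n$), the $1$-form $\partial_z(h_n-\sum_j c_j\log|\widehat{\sigma}|)$ still has, in general, nonzero periods along the $2g$ homology cycles of $X$, so it is not exact and $f_n$ does not exist. The paper handles this by introducing a further correction $h^{\mathrm{per}}$ built from the $2g$ functions $\widehat{\wp}_m(\cdot,w_1)$ and $\ov{\widehat{\wp}_m(\cdot,w_1)}$ for $m\in G(w_1)$, and proves a nondegeneracy claim for the period pairing to show these span all possible period vectors. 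The paper then proves part (a) not via (BW) at all, but by first establishing a holomorphic Bernstein--Walsch theorem (Theorem \ref{th_bersnteinwalsch_holo}) through an explicit Hermite-type remainder formula built from Abel's theorem, and applying it to the holomorphic piece $f$ in $h=h^{\mathrm{res}}+h^{\mathrm{per}}+\Re(f)$.
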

\begin{remark}\label{rem_rstrong}
The hypothesis $\mathcal{R}^{strong}$ holds under weak geometric hypothesis; for instance it is verified under any of the following properties:
\begin{itemize}[label=\textbullet]
\item Every point of $K$ is analytically accessible from $\intK$ i.e. for any $p\in K$ there exists some real analytic function $c:(-1,1)\to X$ such that $c(0)=p$, $c((0,1))\subset\intK$. This is reminiscent of Plesniak's theorem (see \cite[Cor. 3.2]{plesniak1984criterion}, or \cite[Th. 4.3]{BL07}) for harmonic function in any dimension.
\item $K$ is the closure of $\intK$ and $X\setminus \partial K$ has a finite number of connected components.
\end{itemize}
This is discussed at the end of subsection \ref{subsec_holotoharmo}. 
\end{remark}

This spectral speed of approximation is observed when numerically solving the harmonic extension problem (see figure \ref{fig_spec}).

It is sufficient to prove this result for oriented surfaces, and we will suppose in all that follows that $X$ is oriented. While most of the paper is focused on higher genus surfaces, we include the genus $g=1$ in the result and in this case the basis of harmonic function may be replaced with the (analogous) basis from \cite[Th 1.2]{KOO23}. We will use the notations we introduce for genus $g\geq 2$ and will point out the difference between genus $1$ and $2$ in the proof.

\subsection{Organisation of the paper}

In the second section we introduce several well-known concepts in compact Riemann surfaces: the notion of canonical basis of homology, the space of $1$-form and the Abel-Jacobi map, the function $\Theta$ and its application to the construction of quasi-periodic meromorphic functions. Finally, we remind several consequences of the Riemann-Roch theorem and some fact on Weierstrass points.

In the third section, we give a method to effectively construct the Abel-Jacobi map from polynomial approximation, and then the different elements of the basis of harmonic functions for fixed poles.

In the fourth section we prove the main results \ref{mr_finitepoles} and \ref{mr_bernsteinWalsch}, starting from an approximation result for holomorphic functions (see theorem \ref{th_bersnteinwalsch_holo}).

In the fifth section we detail the computational methods used to compute effectively the previously defined basis of harmonic functions with arbitrary precision, and we apply this in the fifth section to the harmonic extension problem in a surface.

In the last section, we give the technical details of the proof of convergence of the least-square method in the approximation of Abel-Jacobi map.

\section{Some facts and notations on compact Riemann surfaces of higher genus}

In this section, we fix the construction of the surface $X$ and remind several classical results on Riemann surfaces, particularly on the space of $1$-forms on a surface, on the Abel-Jacobi map and on theta functions that will be used later to define our basis of harmonic functions. In this section, $X$ is an oriented compact surface of genus $g\geq 2$.

\subsection{Gluing procedure and Fenchel-Nielsen coordinates}\label{sec_gluing}

We first fix a construction of a compact oriented surface $X$ of genus $g\geq 2$. Consider the disk model $\D=\{z\in\C:|z|<1\}$ with the hyperbolic metric $\frac{4}{(1-|z|^2)^2}dz\ov{dz}$. We remind that the matrix group 
\[PSU(1,1)=\left\{\begin{pmatrix}\alpha & \beta \\ \ov{\beta} & \ov{\alpha}\end{pmatrix}\in M_2(\C):|\alpha|^2-|\beta|^2=1\right\}/\{\pm I_2\},\]
describes all direct hyperbolic isometries of $\D$ through the action 
\[\begin{pmatrix}\alpha & \beta \\ \ov{\beta} & \ov{\alpha}\end{pmatrix}\cdot z=\frac{\alpha z+\beta}{\ov{\beta}z+\ov{\alpha}}.\]  
Consider a finite number of hyperbolic polygons in the unit disk $\D=\{z\in\C:|z|<1\}$  (meaning closed simply connected sets such that their boundaries are a union of hyperbolic geodesic i.e. circular arcs and lines that meet $\partial\D$ orthogonally) denoted 
\[(H_p)_{p=1,\dots,m}.\]
We call $\gamma_{p,1},\hdots,\gamma_{p,c(p)}$ the successive sides of $H_p$. We then consider a gluing rule for these polygons: each side $\gamma_{p,i}$ is glued to a unique side $\gamma_{q,j}$ (we write $(p,i)\to (q,j)$, and this is a symmetric relation) in the sense that there exists a disk automorphism  $g_{p,i}\in PSU(1,1)$ such that $g_{p,i}(\gamma_{p,i})=\gamma_{q,j}$ and the sets $g_{p,i}(H_p)$ and $H_q$ have disjoint interior. This implies in particular that glued sides have the same hyperbolic length, and a necessary and sufficient condition for this gluing rule to define a smooth surface is that around each vertex, the sum of the angular opening of all the vertices it is identified with is $2\pi$: we will suppose this condition is verified.

$X$ is then defined as the formal union of the polygons $(H_p)_{p=1,\hdots,m}$ with the identifications induced by the gluing rule. Moreover, we may see the embeddings $H_p\to X$ ($p=1,2,\hdots m$) as a system of charts that define the surface $X$, since these embeddings may be extended to a small neighbourhood of the polygons (so that the range of the charts overlap).

A particular case of this decomposition is when each $H_p$ is a right-angled hexagon. We remind that any three lengths $(l_1,l_2,l_3)\in\R_{>0}$ define (up to isometry) a unique right-angled hexagon where the first, third and fifth sides have respective hyperbolic lengths $l_1/2,l_2/2,l_3/2$.

While this may seem restrictive, any compact oriented surface of genus $g\geq 2$ is conformally equivalent to a hyperbolic surface obtained as the gluing of $4g-4$ right-angled hexagons with the following gluing rules: each hexagon associated to $(l_1,l_2,l_3)$ is paired with a mirrored hexagon (of lengths $(l_1,l_3,l_2)$) by gluing the alternate side (and obtaining a pair of pants i.e. a hyperbolic compact surface with geodesic boundary that is homeomorphic to a sphere with three disks removed).

The $2g-2$ pants are then associated by gluing boundary geodesics of same length, with a possible rotation in the gluing of the geodesics: this leaves a total of $6g-6$ independent parameters with $3g-3$ lengths (denoted $l_i>0$) and $3g-3$ angles (denoted $t_i\in\R/\Z$), that constitute the Fenchel-Nielsen coordinates of the surface. This construction is illustrated in the genus $2$ case in figure \ref{fig_gluing}.

\begin{figure}[!htb]
    \centering
    \begin{tabular}[t]{ccc}
        \begin{tabular}{c}
            \hfill
            \begin{subfigure}[t]{0.3\textwidth}
                \centering
                \includegraphics[width=\linewidth]{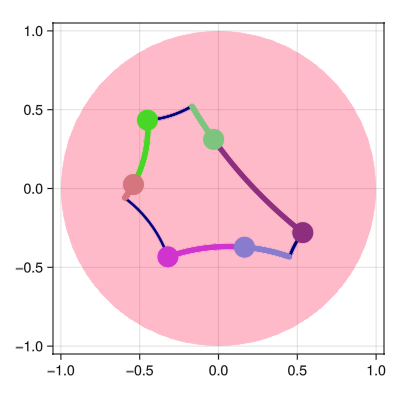}
            \end{subfigure}\\
            \hfill
            \begin{subfigure}[t]{0.3\textwidth}
                \centering
                \includegraphics[width=\linewidth]{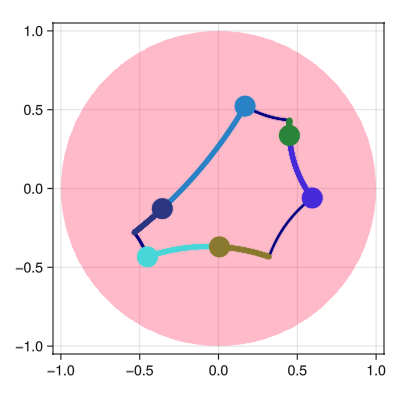}
            \end{subfigure}
        \end{tabular}
        &
        \begin{tabular}{c}
            \hspace{-1cm}
            \begin{subfigure}[t]{0.3\textwidth}
                \centering
                \includegraphics[width=\linewidth]{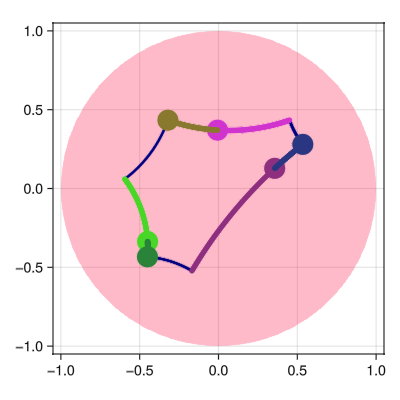}
            \end{subfigure}\\
            \hspace{-1cm}
            \begin{subfigure}[t]{0.3\textwidth}
                \centering
                \includegraphics[width=\linewidth]{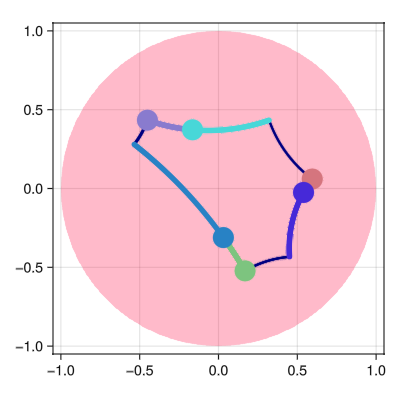}
            \end{subfigure}
        \end{tabular}
    \end{tabular}
    \caption{Gluing conditions associated to Fenchel-Nielsen coordinates: every geodesic is divided in four pieces. In this example
     $(l_1, t_1; l_2, t_2; l_3, t_3) = (\frac32, \frac{1}{10}; 2, \frac{2}{10};\frac52, \frac18)$.}
     \label{fig_gluing}
\end{figure}

\subsection{Canonical basis of homology}

Let $\Gamma$ be the subgroup of $PSU(1,1)$ that is generated from the compositions
\[g_{p_n,i_n}\circ g_{p_{n-1},i_{n-1}}\circ\hdots\circ g_{p_1,i_1}\]
where
\begin{itemize}[label=\textbullet]
\item $p_1=1$.
\item For all $k=1,\hdots,n-1$ there exists $j\in\{1,\hdots,c(p_{k+1})\}$ such that $(p_k,i_k)\to(p_{k+1},j)$.
\item There exists $j\in\{1,\hdots,c(1)\}$ such that $(p_n,i_n)\to (1,j)$.
\end{itemize}

$X$ may then be identified with the quotient $\D/\Gamma$ through the mapping $z\in H_1\mapsto z\text{ mod }\Gamma$, which extends uniquely to a conformal bijection $X\to\D/\Gamma$. This induces a universal covering map $\imath:\D\to X$.

We fix in all that follows a canonical basis of homology of $X$, meaning a set of $2g$ closed curves $a_i:\R/\Z\to X$ and $b_i:\R/\Z\to X$ for $i=1,2,\hdots,g$, such that up to homotopy $(a_i,a_j),(b_i,b_j),(a_i,b_j)$ do not intersect when $i\neq j$, and $(a_i,b_i)$ intersect once with positive orientation.

For each $a_i$ (resp $b_i$), we let $\tilde{a_i}:\R\to\D$ (resp $\tilde{b_i}:\R\to\D$) that maps to $a_i$ (resp $b_i$) through the covering map $\iota:\D\to X$, and there exists a unique element $A_i\in\Gamma$ (resp $B_i\in\Gamma$) such that
\[A_i(\tilde{a_i}(0))=\tilde{a_i}(1),\ B_i(\tilde{b_i}(0))=\tilde{b_i}(1).\]
Up to reordering the curves appropriately then $A_1,B_1,A_2,B_2,\hdots,A_g,B_g$ is a generating family of $\Gamma$ such that
\[A_1B_1A_1^{-1}B_1^{-1}A_2B_2A_2^{-1}B_2^{-1}\hdots A_gB_gA_{g}^{-1}B_{g}^{-1}=I_2,\]
and this relation is sufficient to give a presentation of $\Gamma$. We refer to \cite[I.2.5]{FKH92} for the construction of a canonical $4g$-gon from a general polygonal gluing, which gives this presentation.



\subsection{$1$-forms and Abel-Jacobi maps}\label{sec_abel}
For a subset $E$ of $\C$ we denote by $\Oo(E)$ (resp $M\Oo(E)$) the set of holomorphic (resp meromorphic) functions defined on an open neighbourhood of $E$.

We denote by $\Om^k(X)$ (resp $\Om^k(\D/\Gamma)$) the space of holomorphic $k$-forms on $X$ (resp $\D/\Gamma$), and by $M\Om^k(X)$ (resp $M\Om^k(\D/\Gamma)$) the space of meromorphic $k$-forms on $X$ (resp $\D/\Gamma$).

To be more precise, these are defined as follows: $\Om^k(X)$ is the subset of holomorphic functions 
\[f=(f_{|1},\hdots,f_{|m})\in\Oo(H_1)\times \hdots\times\Oo(H_m)\]
such that for every gluing $(p,i)\to (q,j)$ we have
\[f_{|p}(z)=(g_{p,i}'(z))^kf_{|q}\circ g_{p,i}(z)\text{ for }z\in \gamma_{p,i}.\]
Similarly, $\Om^k(\D/\Gamma)$ is the subset of holomorphic functions $f\in\Oo(\D)$ such that 
\begin{equation}\label{eq_period}
\forall z\in\D,\ \forall \gamma\in\Gamma,\ f(z)=\gamma'(z)^kf(\gamma(z)).
\end{equation}
With our two constructions, the bijection between the spaces $\Om^k(X)$ and $\Om^k(\D/\Gamma)$ is as follows: for any $(f_{|1},\hdots,f_{|m})\in\Om^k(X)$, each $f_{|p}$ extends holomorphically to some function $\tilde{f_{|p}}$ defined on the disk $\D$, and the map
\[\begin{cases}
\Om^k(X)\to \Om^k(\D/\Gamma)\\
(f_{|1},\hdots,f_{|m})\mapsto \tilde{f_{|1}}
\end{cases}\]
is a bijection.
We remind that $\Om^1(\D/\Gamma)$ is a finite dimensional space of dimension $g$, and that we may fix a (uniquely defined) basis $(\om_1,\om_2,\hdots,\om_g)$ of $\Om^1(\D/\Gamma)$ such that for any $j,k\in\{1,\hdots,g\}$:
\begin{equation}\label{eq_canonical_1form}
\int_{0}^{A_j(0)}\om_k=\delta_{j,k}=\begin{cases}1\text{ if }j=k\\0\text{ if }j\neq k\\ \end{cases}.
\end{equation}
In this way we may define the period matrix $\tau\in \text{Sym}_g(\R)$:
\[\tau_{j,k}=\int_{0}^{B_j(0)}\om_k.\]
$\tau$ is a symmetric complex-valued matrix, and $\Im(\tau)$ is positive definite. 
We now define the \textbf{Abel-Jacobi} coordinate maps $u_j:\D\to\C$ by
\[u_j(z)=\int_{0}^{z}\om_j.\]
$u_j$ is well-defined on $\D$ but is not $\Gamma$-periodic: instead it verifies the partial periodicity relations
\[u_j(A_k(z))=u_j(z)+\delta_{j,k},\ u_j(B_k(z))=u_j(z)+\tau_{j,k}.\]
The Abel-Jacobi map based at the point $0\in\D_1$ is then
\[u:\begin{cases}\D\to\C^g\\ z\mapsto (u_1(z),u_2(z),\hdots,u_g(z))\end{cases}\]
It verifies the periodicity relations
\begin{equation}\label{aj_period}
u(A_j(z))=u_j(z)+e_j,\ u(B_j(z))=u(z)+\tau e_j
\end{equation}
for all $z$. In particular, if $z'=\gamma(z)$ for some $\gamma\in\Gamma$, then
\[u(z')-u(z)\in \Z^g+\tau \Z^g,\]
meaning that $u$ defines a map from the surface $X$ to the $2g$-dimensional torus $\C^{2g}/(\Z^g+\tau \Z^g)$ denoted $\ov{u}$. We also introduce the notation
\[u^k:\begin{cases}\D^k\to \C\\ (z_1,\hdots,z_k)\mapsto\sum_{j=1}^{k}u(z_j)\end{cases},\]
and $\ov{u}^k$ the quotient map from $X$ to $\C^g/(\Z^g+\tau\Z^g)$. We denote $\mathcal{C}\subset \C^g$ the critical values of $u^{g}$: it is a set with (complex) dimension at most $g-2$, and it is characterized as the set of points $\sum_{j=1}^{g}u(z_j)$ where
\[\det\left(\left(\om_j(z_k)\right)_{1\leq j,k\leq g}\right)=0.\]


\subsection{Theta functions and quasi-periodic functions with prescribed zeroes}

Let us now define the function
\[\Theta(Z):=\sum_{n\in\Z^g}e^{2\pi i \left(\frac{1}{2}n\cdot \tau n+n\cdot Z\right)},\ \forall Z\in\C^g.\]
In many references $\Theta$ is given by the same formula as a function of $Z$ and $\tau$, but since the matrix $\tau$ is fixed here we leave it out of the arguments. The function $\Theta$ verifies the following periodicity relations, for any $Z\in\C^g,\ n\in\Z^g$:
\begin{equation}\label{eq_period_theta}
\Theta(Z+n)=\Theta(Z),\ \Theta(Z+\tau n)=\Theta(Z)e^{-i\pi\left(n\cdot \tau n+2n\cdot Z\right)},\ \Theta(-Z)=\Theta(Z).
\end{equation}

Let us define the Riemann constant $\K\in\C^g$ by its coordinates
\begin{equation}\label{eq_defRiemann}
\K_j=\frac{1}{2}\tau_{jj}-\sum_{k=1}^{g}\int_{0}^{A_k(0)}u_j(z)u_k'(z)dz.
\end{equation}

\begin{theorem}
$\{Z\in\C^g:\Theta(Z)=0\}=\K+u^{g-1}(\D^{g-1})$. Moreover, for any $\xi\in\C^g\setminus (\K+\mathcal{C})$ written as
\[\xi=\K+\sum_{j=1}^{g}u(w_j),\]
the function
\[z\mapsto \Theta(u(z)-\xi)\]
vanishes with order $1$ at every element of $\{\gamma(w_j),\gamma\in\Gamma,j=1,\hdots,g\}$, and is non-zero elsewhere.
\end{theorem}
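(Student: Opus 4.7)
My plan is to follow the classical proof of the Riemann vanishing theorem: lift to the universal cover, count zeros and compute their Abel--Jacobi image by contour integration on a fundamental $4g$-gon, and combine with Abel's theorem and Riemann--Roch. The central object is $F(z):=\Theta(u(z)-\xi)$, viewed as a holomorphic function on $\D$.

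First, combining the theta transformation laws \eqref{eq_period_theta} with the periodicity \eqref{aj_period}, one finds $F\circ A_k=F$ while $F\circ B_k=F\cdot\exp\bigl(-i\pi(\tau_{kk}+2(u_k(z)-\xi_k))\bigr)$. Thus the zero divisor of $F$ is $\Gamma$-invariant and descends to a divisor $D_\xi$ on $X$. I would then fix a fundamental $4g$-gon $\Pi\subset\D$ bounded by the canonical cycle $a_1b_1a_1^{-1}b_1^{-1}\cdots a_gb_ga_g^{-1}b_g^{-1}$, translated slightly so that $F$ has no zeros on $\partial\Pi$. A short preliminary argument, using that $\{\Theta=0\}$ is a proper analytic hypersurface of $\C^g$ together with Jacobi inversion, shows that $\xi\notin\K+\mathcal{C}$ prevents the curve $u(\D)-\xi$ from being contained in $\{\Theta=0\}$, hence $F\not\equiv 0$.

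Second, I would evaluate two contour integrals over $\partial\Pi$. The argument principle applied to $\frac{1}{2\pi i}\oint_{\partial\Pi}d\log F$: the $A_k$-invariance makes each $a_k$-pair cancel, while the $B_k$-multiplier of $F$ leaves a net term $2\pi i\int_{a_k}du_k=2\pi i$ per $k$ by \eqref{eq_canonical_1form}, giving $\deg D_\xi=g$. A parallel computation with $\frac{1}{2\pi i}\oint_{\partial\Pi}u_j(z)\,d\log F(z)$, where an integration by parts on each $b_k$-pair produces exactly the combination $\tfrac{1}{2}\tau_{jj}-\sum_{k=1}^g\int_0^{A_k(0)}u_j(z)u_k'(z)\,dz$ defining $\K_j$ in \eqref{eq_defRiemann}, yields
\[\sum_{p\in D_\xi}u(p)\equiv \xi-\K\pmod{\Z^g+\tau\Z^g}.\]

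Third, I would deduce both statements. For the second assertion, the hypothesis $\xi=\K+\sum_{j=1}^g u(w_j)\notin\K+\mathcal{C}$ means the degree-$g$ divisor $\sum w_j$ is non-special; the step-2 identity combined with Abel's theorem gives $D_\xi\sim w_1+\cdots+w_g$, and Riemann--Roch forces the associated linear system to be $1$-dimensional, so $D_\xi=w_1+\cdots+w_g$ with each $w_j$ a simple zero. For the inclusion $\K+u^{g-1}(\D^{g-1})\subset\{\Theta=0\}$, write $e=\K+\sum_{j=1}^{g-1}u(w_j)$, pick $w_g$ generic so that $\xi:=e+u(w_g)\notin\K+\mathcal{C}$, apply the second assertion and evaluate $\Theta(u(w_g)-\xi)=\Theta(-e)=\Theta(e)=0$. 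For the reverse inclusion, given $\Theta(e)=0$, apply the second assertion to $\xi:=u(w_0)-e$ for a generic $w_0$ ensuring $\xi\notin\K+\mathcal{C}$: then $F(w_0)=\Theta(-e)=0$, so $w_0\in D_\xi$, and the $g-1$ remaining zeros satisfy $\sum u(p_j)\equiv -e-\K$, placing $-e$ in $\K+u^{g-1}(\D^{g-1})$; the symmetry $-\K-u^{g-1}(\D^{g-1})\equiv\K+u^{g-1}(\D^{g-1})$ modulo the lattice (a consequence of Riemann's theorem on the canonical class) transfers this to $e$. The main technical obstacle throughout is the bookkeeping of boundary terms in the second contour integral: the definition \eqref{eq_defRiemann} of $\K$ has been set up precisely to absorb the mismatch between $a_k$- and $b_k$-side contributions, and matching it exactly is the crux of the proof.
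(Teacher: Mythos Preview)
The paper does not give its own proof of this theorem: immediately after the statement it writes ``This is contained in \cite[VI.3.1,VI.3.2]{FKH92}'' and moves on. There is therefore nothing in the paper to compare your proposal against. Your sketch is precisely the classical argument one finds in Farkas--Kra: the two contour integrals over the boundary of a fundamental $4g$-gon (argument principle for $\deg D_\xi=g$, weighted by $u_j$ for the Abel--Jacobi image), followed by Abel's theorem and Riemann--Roch to pin down $D_\xi$ when the divisor is non-special.

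One small point deserves more care than you give it. Your justification that $F\not\equiv 0$ when $\xi\notin\K+\mathcal{C}$ is slightly circular as written: you invoke ``Jacobi inversion'' together with the hypersurface property, but the precise link between $\mathcal{C}$ (critical values of $u^g$) and the locus where $z\mapsto\Theta(u(z)-\xi)$ vanishes identically is part of the package being proved. The clean way is to first run the contour-integral computation for \emph{generic} $\xi$ (where $F\not\equiv 0$ is immediate since $\{\Theta=0\}$ is a proper hypersurface and $u$ is non-constant), deduce the identity $\sum u(p)\equiv\xi-\K$ on a dense open set, and only afterward identify the exceptional locus with $\K+\mathcal{C}$ via the characterization of special divisors. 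The same remark applies to your reverse-inclusion step, where ``generic $w_0$'' must in particular avoid the (a priori unknown) set where $F\equiv 0$. These are standard bookkeeping issues in the classical proof, not genuine gaps.
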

This is contained in \cite[VI.3.1,VI.3.2]{FKH92}. Note that the condition on $\xi$ imposes that the $(w_j)s$ must be distinct (modulo $\Gamma$). The periodicity conditions verified by this function are, for any $j$,
\begin{equation}\label{eq_periodicity}
\Theta(u(A_j(z))-\xi)=\Theta(u(z)-\xi),\ \Theta(u(B_j(z))-\xi)=\Theta(u(z)-\xi)e^{2\pi i\left(\xi_j-u_j(z)-\frac{1}{2}\tau_{j,j}\right)}.
\end{equation}

\subsection{Weierstrass points}\label{subsec_intro_weiestrass}

Let $D$ be a divisor on $X$, meaning an element of $X^{\Z}$ represented as formal sum $D=\sum_{p\in X}n_p [p]$ where $n_p\in\Z$ is zero for all but a finite number of points $p\in X$. We denote by

\[\ell(D)=\di\left\{f\in M\Om^0(X):f=0\text{ or }(f)+D\geq 0\right\},\]
where $(f)=\sum_{p\in X}\text{ord}(f,p)[p]$ is the divisor associated to $f$. Let $K$ be the divisor of some holomorphic $1$-form, we remind that by the Riemann-Roch theorem:
\[\ell(D)-\ell(K-D)=\deg(D)-g+1,\]
where $\deg\left(\sum_{p\in X}n_p[p]\right)=\sum_{p\in X}n_p$. We summarize the results we will use as follows:
\begin{proposition}
Let $p\in X$, then the sequence $n\in \N\mapsto \ell(n[p])$ verifies
\[\ell(0[p])=1,\ \ell((n+1)[p])-\ell(n[p])\in\{0,1\},\ \ell(n[p])=n-g+1\text{ for any }n\geq 2g-1.\]
Moreover, there is a finite set of points $W\subset X$ called \textbf{Weierstrass points} such that for any $p\in X\setminus W$ we have
\[\ell(n[p])=1+(n-g)_+.\]
\end{proposition}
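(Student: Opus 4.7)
My plan is to treat the three bulleted statements about $\ell(n[p])$ first, then use Riemann-Roch to reduce the Weierstrass-point finiteness to a single codimension-one condition on $p$, and finally produce a non-trivial Wronskian-type holomorphic section whose zero set equals $W$.

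For the sequence $n\mapsto \ell(n[p])$: the identity $\ell(0[p])=1$ just records that the only globally holomorphic functions on a compact Riemann surface are constants, by the maximum modulus principle. For the jump bound, I would fix a local coordinate $z$ centered at $p$ and consider the linear functional $L((n+1)[p])\to\C$ sending $f$ to the coefficient of $z^{-(n+1)}$ in its Laurent expansion at $p$; its kernel is exactly $L(n[p])$, so $\ell((n+1)[p])-\ell(n[p])\in\{0,1\}$. For the stabilization I apply Riemann-Roch to $D=n[p]$: when $n\geq 2g-1$ we have $\deg(K-n[p])=2g-2-n<0$, so $\ell(K-n[p])=0$ (a nonzero meromorphic $1$-form has non-negative-degree divisor), giving $\ell(n[p])=n-g+1$.

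Next, a simple counting argument identifies the generic behaviour. Between $n=0$ and $n=2g-1$ the function $\ell(n[p])$ increases by a total of $g-1$ in $2g-1$ steps, each of size $0$ or $1$; hence there are exactly $g-1$ unit jumps and $g$ zero jumps (``gaps'') in the range $n\in\{1,\dots,2g-1\}$. The formula $\ell(n[p])=1+(n-g)_+$ is equivalent to the statement that all unit jumps occur at $n\in\{g+1,\dots,2g-1\}$, which is in turn equivalent to the single condition $\ell(g[p])=1$. I will therefore define $W:=\{p\in X:\ell(g[p])\geq 2\}$ and show it is finite.

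By Riemann-Roch with $D=g[p]$, the condition $\ell(g[p])\geq 2$ is equivalent to $\ell(K-g[p])\geq 1$, i.e.\ to the existence of a non-zero $\omega\in\Omega^1(X)$ with $\mathrm{ord}_p(\omega)\geq g$. The plan here is to build a global non-zero ``Wronskian'' section whose vanishing locus is exactly $W$: pick a basis $\omega_1,\dots,\omega_g$ of $\Omega^1(X)$ and, in each chart with local coordinate $z$, write $\omega_i=f_i(z)\,dz$; the local function
\[
\mathcal{W}(z)=\det\bigl(\partial_z^{j-1}f_i(z)\bigr)_{1\leq i,j\leq g}
\]
vanishes at $p$ precisely when the $\omega_i$'s cannot separate the jets up to order $g-1$, i.e.\ when there exists a non-zero 1-form vanishing to order $\geq g$ at $p$. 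A direct computation with the chain rule shows that under a change of coordinate $z=\phi(w)$ the local Wronskian transforms by a non-vanishing factor (a power of $\phi'$), so the zero set of $\mathcal{W}$ is a well-defined closed subset of $X$; equivalently $\mathcal{W}$ descends to a global holomorphic section of the line bundle $K^{\otimes g(g+1)/2}$. Since $\mathcal{W}$ is not identically zero (the $\omega_i$ form a basis, hence their jets at a generic point are $\C$-linearly independent), the compactness of $X$ forces its zero set—and therefore $W$—to be finite.

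The main technical point, and thus the part I expect to demand the most care, is the coordinate-change computation showing that $\mathcal{W}$ really defines a global section of a fixed line bundle (rather than just a collection of local determinants) and that it is not identically zero; once that is in place, finiteness of $W$ is automatic from compactness of $X$.
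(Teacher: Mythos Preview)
Your proof is correct and follows the standard route. Note, however, that the paper does not actually prove this proposition: it is presented as a summary of well-known facts (with a reference to Farkas--Kra for the cardinality bounds on $W$), and no proof environment follows the statement. That said, immediately after the proposition the paper records precisely the Wronskian characterization you use, namely that $p$ is a Weierstrass point if and only if $\det\bigl((\omega_j^{(k-1)}(p))_{1\leq j,k\leq g}\bigr)=0$, with the remark that this determinant is defined up to a non-zero multiplicative factor on $\mathbb D/\Gamma$---which is exactly your coordinate-change observation. So your argument is the expected one and aligns with what the paper takes for granted.
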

$p$ is a Weierstrass point if and only if
\[\det\left(\left(\om_j^{(k-1)}(p)\right)_{1\leq j,k\leq g}\right)= 0,\]
where this quantity is defined up to a (non-zero) multiplicative factor in $\D/\Gamma$.

By \cite[Cor III.5.11]{FKH92}, the cardinal of $W$ is in the interval $[2g+2,g^3-g]$, in particular when $g=2$ then $W$ has exactly $6$ element.

Applying the Riemann-Roch theorem to $D=n[p]$, this may be reformulated as follows: $\ell(K-n[p])$ is the dimension of the space of $1$-forms $\om\in\Om^1(X)$ that admits a zero of order at least $n$ at $p$. So for any $p\in X$,
\[\ell(K-(2g-1)[p])=0,\]
meaning that any $1$-form that has a zero of order larger than $2g-1$ is zero everywhere,
and if $p\in X\setminus W$ then
\[\ell(K-g[p])=0,\]
meaning that any $1$-form that has a zero of order at least $g$ at a non-Weierstrass point is zero everywhere. As a consequence:

\begin{lemma}\label{lemma_wei}
Let $z\in X$ (in some polygon $H_p$), the linear map 
$$\om\in \Om^1(X)\mapsto \left(\om_{|p}(z),\om_{|p}'(z),\hdots,\om_{|p}^{(2g-2)}(z)\right)\in\C^{2g-1}$$
is injective, and if $z\in X\setminus W$ then the map 
$$\om\in\Om^1(X)\mapsto \left(\om_{|p}(z),\om_{|p}'(z),\hdots,\om_{|p}^{(g-1)}(z)\right)$$
is bijective.
\end{lemma}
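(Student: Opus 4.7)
The plan is to reduce both claims to a dimension count via the Riemann--Roch theorem. The crucial fact is that $\ell(K-N[z])$ is exactly the dimension of the space of holomorphic $1$-forms on $X$ vanishing to order at least $N$ at $z$, where $K$ denotes the canonical divisor (of degree $2g-2$). So I would recognize the kernel of each of the two jet maps in the statement as this space, and then show $\ell(K-N[z])=0$ for $N=2g-1$ unconditionally, and for $N=g$ when $z$ is not a Weierstrass point.

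The first step is to verify that vanishing of the chart-dependent jet $(\om_{|p}(z),\hdots,\om_{|p}^{(N-1)}(z))$ really captures the intrinsic divisor condition $(\om)\geq N[z]$. In the chart $H_p$, this is immediate since the jet being zero just means $\om_{|p}$ has a zero of order $\geq N$ at $z$. For chart-independence, differentiating the transformation rule $\om_{|p}(\zeta)=g_{p,i}'(\zeta)\om_{|q}(g_{p,i}(\zeta))$ and applying Fa\`a di Bruno shows that the jet of $\om_{|p}$ at $z$ is obtained from the jet of $\om_{|q}$ at $g_{p,i}(z)$ by an upper-triangular linear map whose diagonal entries $g_{p,i}'(z)^{k+1}$ are non-zero; hence the jet vanishing condition is intrinsic to $\om$.

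With that in hand, for the first claim I would take $N=2g-1$: since $\deg(K-(2g-1)[z])=-1<0$, no nonzero meromorphic function $f$ can satisfy $(f)+K-(2g-1)[z]\geq 0$ (the degrees disagree), hence $\ell(K-(2g-1)[z])=0$ and the jet map is injective. For the second claim, assuming $z\in X\setminus W$, the proposition recalled just above gives $\ell(g[z])=1$, and Riemann--Roch applied to $D=g[z]$ yields
\[\ell(K-g[z])=\ell(g[z])-(\deg(g[z])-g+1)=1-1=0,\]
so the jet map is injective from $\Om^1(X)$, a space of complex dimension $g$, into $\C^g$, and is therefore bijective. The main (and essentially only) non-trivial step is the chart-independence addressed in the second paragraph; once that is in place everything else is a direct packaging of the proposition and Riemann--Roch as recalled in the excerpt.
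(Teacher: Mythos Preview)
Your proposal is correct and matches the paper's approach exactly: the paper presents the lemma as an immediate consequence of the two facts $\ell(K-(2g-1)[p])=0$ and, for $p\notin W$, $\ell(K-g[p])=0$, both derived from Riemann--Roch and the proposition on $\ell(n[p])$ just before the lemma. Your explicit verification of chart-independence of the jet vanishing condition is a detail the paper leaves implicit, but otherwise the arguments are identical.
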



\section{Construction of a basis} \label{sec_basis} 

We split this construction in several steps: first there is the ``pre-treatment'' that is  independent of the location of the pole, where we compute a polynomial that approximates with high accuracy the Abel-Jacobi coordinate maps $u$. This allows us to compute the Riemann constant $\K$ defined in equation \eqref{eq_defRiemann} as well.

Next we build, for two poles $v,w$, the harmonic function with opposing logarithmic poles at $v,w$. Then for a fixed pole $w$  we build the complex harmonic function with a single pole of order $n$ at $w$: we start by constructing the pole of order $1$ using the composition of the logarithmic derivative of the $\Theta$ function with a suitable translation of the Abel-Jacobi map. We then compute formally the successive differential of this function along the location $w$ of the pole.

Once we've built sufficiently many meromorphic function with a pole at $w$, then the rest of the basis may be built by exponentiation of these functions: depending on whether the pole $w$ is a Weierstrass point or not, we need to build by successive differentiation either $2g+1$ or $4g-1$ functions before all the rest may be built by exponentiation.

We will explain in detail the case where the pole $w$ is \textbf{not} a Weierstrass point. We then explain afterward how to adapt it to this case.

\subsection{Computation of $1$-forms}\label{subsec_aj}

This subsection relies heavily on the representation of $X$ as a gluing of $m$ hyperbolic polygons $(H_p)_{p=1,\hdots,m}$, and we use the notations of section \ref{sec_gluing}. We will suppose additionally that the first polygon $H_1$ contains the center of the disk $0$, and that $0$ is not a Weierstrass point. 

For any $\om\in\Om^1(X)$, let us define the approximation space

\[\A^{N,\om}=\left\{P=(P_{|1},\hdots,P_{|m})\in \C_{N}[z]^m:(P_{|1}(0),P_{|1}'(0),\hdots,P_{|1}^{(g-1)}(0))=(\om_{|1}(0),\om_{|1}'(0),\hdots,\om_{|1}^{(g-1)}(0))\right\}.\]

Let $S(N)=a N$ for some $a\in \N_{\geq 1}$ that will be taken large enough, and for any side $\gamma_{p,i}$ we denote by $\Sa_{p,i}$ a sampling of $\gamma_{p,i}$ by $S(N)$ regularly spaced points. 

For any $P=(P_{|1},\hdots,P_{|m})\in\C_N[z]^{m}$, we let
\[E^N(P)=\sum_{(p,i)\to (q,j)}\frac{1}{S(N)}\sum_{z\in\Sa_{p,i}}\left|P_{|p}(z)-g_{p,i}'(z)P_{|q}(g_{p,i}(z))\right|^2.\]
Finally, for any compact set $K\subset\C$ we let $\mathcal{G}_{ K,w}$ be its potential function defined as in equation \eqref{eq_green} (where the ambient manifold is $\widehat{\C}$ in this case).

The main result of this section is the following:
\begin{theorem}\label{the_conv}
Let $X$ be a surface defined as above, such that $0\in H_1$ is not a Weierstrass point. Let $\rho\in ]0,1[$ be such that 
\[\rho>\exp\left(-\min_{p=1,\hdots,m}\inf_{z\in\partial \D_1}\mathcal{G}_{ H_p,\infty}(z)\right),\]
Then there exists a large enough integer $a>0$ and a constant $C>0$ such that for any $\om\in\Om^1(X)$, any $N\in\N_{\geq g}$, we denote by $Q^{N,\om}$ a minimizer of $E^N$ in $\A^{N,\om}$ and we have: 
\[\sum_{p=1}^{m}\Vert Q_{|p}^{N,\om}-\om_{|p}\Vert_{L^\infty(H_p)}\leq C\rho^N\sum_{j=0}^{g-1}|\om_{|1}^{(j)}(0)|.\]
\end{theorem}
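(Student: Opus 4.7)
The plan follows the Bernstein--Walsh paradigm: construct an admissible candidate $P^\ast\in\A^{N,\om}$ with exponentially small discrete gluing energy, transfer the bound to the least-squares minimizer via optimality, and then invert the discrete energy to recover an $L^\infty$ estimate on $R:=Q^{N,\om}-P^\ast$.

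\textbf{Building the candidate.} Since each $\om_{|p}$ lifts to a function holomorphic on all of $\D\supset H_p$, classical Bernstein--Walsh applied on $H_p$ produces polynomials $\tilde P_{|p}\in\C_N[z]$ with $\|\tilde P_{|p}-\om_{|p}\|_{L^\infty(H_p)}\le C\rho^N\|\om_{|p}\|_{L^\infty(K)}$ for any level set $K=\{\mathcal G_{H_p,\infty}\le T'\}\subset\D$ with $-\log\rho<T'$ below the stated threshold. Since $\Om^1(X)$ is $g$-dimensional and Lemma \ref{lemma_wei} identifies it with $\C^g$ via the $g$-jet at the non-Weierstrass point $0$, we have $\|\om_{|p}\|_{L^\infty(K)}\le C'\sum_{j=0}^{g-1}|\om_{|1}^{(j)}(0)|$. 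A polynomial correction of degree less than $g$ added to $\tilde P_{|1}$ enforces the prescribed derivatives at $0$; its coefficients are controlled by Cauchy estimates on a fixed disk around $0\in H_1$, contributing only $O(\rho^N\sum_j|\om_{|1}^{(j)}(0)|)$. Call the resulting tuple $P^\ast\in\A^{N,\om}$. On each edge, using that $\om$ satisfies the gluing exactly,
\[
P^\ast_{|p}-g'_{p,i}P^\ast_{|q}\circ g_{p,i}=(P^\ast_{|p}-\om_{|p})-g'_{p,i}(P^\ast_{|q}-\om_{|q})\circ g_{p,i},
\]
of order $\rho^N\sum_j|\om_{|1}^{(j)}(0)|$ pointwise, so $E^N(P^\ast)\le C\rho^{2N}\bigl(\sum_j|\om_{|1}^{(j)}(0)|\bigr)^2$. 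Minimality of $Q^{N,\om}$ and a parallelogram-type estimate give $E^N(R)\le 4E^N(P^\ast)$ with $R=Q^{N,\om}-P^\ast\in\A^{N,0}$.

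\textbf{Inverse inequality (main obstacle).} It remains to establish a uniform coercivity bound
\[
\sum_p\|R_{|p}\|_{L^\infty(H_p)}\le CN^c\sqrt{E^N(R)}\qquad\forall R\in\A^{N,0},
\]
with $C,c>0$ independent of $N$ once $a$ is above a fixed threshold. The polynomial loss $N^c$ is harmless because the hypothesis on $\rho$ is strict: working with $\rho'<\rho$ still above the threshold absorbs $N^c$. I would prove this in three stages. First, a Marcinkiewicz--Zygmund-type inequality applied to the rational function $z\mapsto R_{|p}(z)-g'_{p,i}(z)R_{|q}(g_{p,i}(z))$ on the analytic arc $\gamma_{p,i}$ (whose only pole $g_{p,i}^{-1}(\infty)$ lies strictly outside $\ov{\D}$) replaces the discrete sum in $E^N$ by the continuous $L^2$ norm on the edge, with constants independent of $N$ for $a$ large enough. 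Second, view $R$ as a ``piecewise 1-form'' on $X$ whose distributional $\bar\partial$ is a measure supported on the 1-skeleton with density equal to the gluing defects; using Hodge theory on the closed Riemann surface $X$, decompose $R=R_h+R_s$ with $R_h\in\Om^1(X)$ the holomorphic projection and $R_s$ controlled in $L^\infty$ on each hexagon by the $L^2$ norm of the defects, at the cost of a polynomial loss in $N$ via Sobolev embeddings and inverse estimates for polynomials of degree $N$. Third, since $R\in\A^{N,0}$ has vanishing $g$-jet at $0$ and Cauchy estimates on a fixed disk around $0\in H_1$ bound the $g$-jet of $R_s$ at $0$ by the $L^2$ norm of the defects, the $g$-jet of $R_h$ at $0$ is small; Lemma \ref{lemma_wei} then yields $\|R_h\|_{L^\infty(X)}\lesssim N^c\sqrt{E^N(R)}$, and combining with the bound on $R_s$ gives the inverse inequality.

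The genuinely hard part is the second stage: a quantitative inversion of $\bar\partial$ on the closed surface with distributional source on the 1-skeleton, propagating the resulting $L^2$ bound to $L^\infty$ on each hexagon with only a polynomial loss in $N$. This is where the surface-level analysis (Hodge theory, elliptic regularity) genuinely enters and drives the proof past a single-polygon Bernstein--Walsh estimate. Combining the three stages with the triangle inequality and the candidate bound produces $\sum_p\|Q^{N,\om}_{|p}-\om_{|p}\|_{L^\infty(H_p)}\le CN^c\rho^N\sum_{j=0}^{g-1}|\om_{|1}^{(j)}(0)|$, and replacing $\rho$ with a slightly smaller $\rho'$ still above the threshold absorbs the $N^c$ factor, yielding the theorem.
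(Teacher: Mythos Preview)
Your overall architecture---build an admissible competitor with exponentially small energy, then prove a coercivity/inverse inequality for the energy on $\A^{N,0}$---matches the paper exactly, and your competitor construction is essentially the paper's Lemma~\ref{lem_est_comparaison}. The divergence is entirely in how the inverse inequality is obtained.

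The paper does \emph{not} attempt a quantitative $\bar\partial$-inversion. Instead it proceeds in three elementary steps: (i) Lagrange interpolation on each edge (Lemma~\ref{lem_est_sample}) passes from the discrete samples to an $L^\infty$ bound on the edge \emph{trimmed away from the vertices}, with loss $N^4e^{NC/a}$; (ii) a Bernstein--Walsh/potential-theoretic argument (Lemma~\ref{lem_est_bernstein}), using the $\tfrac12$-H\"older regularity of the Green function of an arc, propagates this to a full open neighborhood of each edge, with loss $e^{N\beta/\sqrt a}$; (iii) a soft compactness argument (Lemma~\ref{lem_est_riemannroch}): if the neighborhood gluing defects and the $g$-jet at $0$ both vanish along a sequence with unit $L^\infty$ norm, Montel's theorem on the enlarged hexagons produces a limit in $\Om^1(X)$ with vanishing $g$-jet at the non-Weierstrass point $0$, a contradiction. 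The resulting constant $c_a$ is non-explicit, but since $a$ is fixed once chosen large enough to absorb the losses $e^{NC/a}e^{N\beta/\sqrt a}$ into the slack between $\rho$ and the threshold, this suffices.

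Your Stage~2 is a genuinely different route, and as written it carries a real gap. The decomposition $R=R_h+R_s$ makes $R_s$ a piecewise object (polynomial minus a global holomorphic $1$-form), so polynomial inverse estimates do not apply to it; and $L^\infty$ control on solutions of $\bar\partial u=\mu$ with $\mu$ a measure supported on curves is not available from standard elliptic regularity or Sobolev embedding---one typically lands in $W^{-\epsilon,p}$-type spaces, far from $L^\infty$. You correctly flag this as the hard part, but it is not clear it can be completed with only polynomial loss in $N$. The paper avoids the issue entirely: step~(ii) thickens the edge control to \emph{open} sets, after which normal families replace any quantitative inversion. What your approach would buy, if it could be made to work, is an explicit constant in place of the compactness constant $c_a$; what the paper's approach buys is a much shorter and fully rigorous argument.
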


We prove in fact a more precise conclusion: there is a constant $C_a>0$ (depending on the geometry of the hexagons $(H_p)_{p=1,\hdots,m}$, which is supposed to be fixed, and on $a$) such that for any $Q\in \A^{N,\om}$:
\[\sum_{p=1}^{m}\Vert Q_{|p}-\om_{|p}\Vert_{L^\infty(H_p)}^2\leq C_a N^4e^{\frac{C}{\sqrt{a}}N}E^N(Q).\]
Then the result follows from the fact that $\inf_{\A^{N,\om}}\sqrt{E^N}=\Oo\left(\rho^{N}\sum_{j=0}^{g-1}|\om_{|1}^{(j)}(0)|\right)$ for any such $\rho$, and taking $a$ large enough: nowhere do we use the fact that $Q^{N,\om}$ is a minimizer.

For now, we relegate the proof of this theorem to the last section. Let us give some remarks on the optimality and the limitations of this result.

\begin{itemize}
\item[1) ]We may give a more explicit upper bound of $\rho$ as follows: any $\rho$ such that the open disk $\D_{\rho}$ contains all the closed polygons $(H_p)_{p}$ verifies the hypothesis of the theorem. Indeed, by maximum principle we would have $\mathcal{G}_{ H_p,\infty}(z)\geq \log(z/\rho)_+$, so $\inf_{z\in\partial \D}\mathcal{G}_{ H_p}\geq \log(1/\rho)$ (and the inequality would be strict by strong maximum principle).
\item[2) ]The speed of convergence given here is optimal, in the sense that any $\rho$ lower than the bound given in the theorem would lead to a contradiction. This is discussed at the end of the section \ref{sec_leastsquareconv}.
\item[3) ]Each $Q^{N,\om}_{|p}$ converges beyond the polygon $H_p$: in fact we have 
\[\Vert Q^{N,\om}_{|p}-\om\Vert_{L^\infty(H_p\sqcup\{G_{\widehat{\C}\setminus H_p,\infty}<t\})}=\mathcal{O}_{N\to\infty}\left((e^{t}\rho)^N\right)\]
for any $\rho$ as in the statement and any $t$. However, this does not mean that $Q^{N,\om}_{|p}$ converges to $\om_{|p}$ in all of the unit disk $\D_1$. 
\item[4) ]Our proof of the theorem does not give an explicit constant $C$, because one of the bounds (see lemma \ref{lem_est_riemannroch}) includes an argument by contradiction. It is unclear whether this could be made explicit easily.
\item[5) ]$0\in H_1$ may happen to be a Weierstrass point. This happens for instance when $X$ is obtained by gluing the opposite sides of a single regular octogon centered at $0$ with angles $\frac{\pi}{4}$ (this is the Bolza surface): in this case the Weierstrass points are exactly the origin, the vertices and the middle of the edges (which after identifications correspond to a total of $6$ different points). In particular there is a basis of $1$-forms $\alpha,\beta\in \Om^1(X)$ such that
\[\alpha(z)=1+\Oo_{z\to 0}(z),\ \beta(z)=z^2+\Oo_{z\to 0}(z^3).\]
In practice this may be detected by the fact that for some $l=1,\hdots,g$, the quantity
\[\inf\left\{E^N(P),\ P\in \C_N[z]^m:\forall j=1,\hdots,g,\ P_{|1}^{(j-1)}(0)=\delta_{j,l}\right\}\]
does not converge to $0$. To solve this issue we proceed by perturbation: we choose some random point $w_{rand}\in H_1$ and impose successive derivative conditions at $w_{rand}$ instead of $0$. We may also change $H_1$ into $\varphi(H_1)$ for some small perturbation of the identity $\varphi\in PSU(1,1)$, which amounts to changing $\gamma_{1,i}$ into $\varphi(\gamma_{1,i})$ and composing the transition functions $g_{p,i}$ accordingly.
\end{itemize}

Overall this least square method allows us to compute a good approximation of the canonical basis of $1$-forms (verifying \eqref{eq_canonical_1form}). Moreover, polynomial expression can be explicitly integrated, so this also give us an approximate period matrix $\tau$, Abel-Jacobi map $u$ and Riemann constant $\K$. The details of these computations are explained in sections \ref{sec_comp} and \ref{sec_comp_period}.

In the next subsections we give a construction of the harmonic basis with explicit formula given by compositions of the Abel-Jacobi map $u$ and $\Theta$ functions (and its derivatives), the period matrix $\tau$ and the Riemann constant $\K$: in practice these will be computed using our approximations.

\subsection{Logarithmic singularities}\label{subsec_log}
In this subsection and all the following we work on the surface $\D/\Gamma$, instead of the (equivalent) surface $X$ obtained by gluing, for the convenience of notations. How these two points of view reconcile in practice is explained in section \ref{sec_num}.

Let $v,w$ be two distinct points and

\[\xi\in (\K+u^{g-1}(\D^{g-1}))\setminus \left[(\mathcal{C}+\K-u(v))\cup (\mathcal{C}+\K-u(w))\right],\]
where $\mathcal{C}$ is the set defined in subsection \ref{subsec_intro_weiestrass} as the image of the critical points of $u^g$.

In other words $\xi$ is chosen as

\[\xi=\K+\sum_{j=1}^{g-1}u(w_j)\]
where $w_1,\hdots,w_{g-1}$ are generic: since $\mathcal{C}$ has dimension at most $g-2$, generic points will give a suitable $\xi$. We let
\begin{equation}\label{Bjperiod_sigma}
\sigma_{v,w}(z)=\frac{\Theta(u(z)-u(v)-\xi)}{\Theta(u(z)-u(w)-\xi)}.
\end{equation}

Then $\sigma_{v,w}$ vanishes exactly with order $1$ at $\Gamma\cdot v$, and has one pole (modulo $\Gamma$) of order $1$ at $\Gamma\cdot w$. Moreover, $\sigma_{v,w}$ verifies $\sigma_{v,w}(A_j(z))=\sigma_{v,w}(z)$ and
\[\sigma_{v,w}(B_j(z))=\sigma_{v,w}(z)e^{2\pi i\left(u_j(v)-u_j(w)\right)}.\]
Let $(c_k)_k\in\C^g$ to be fixed later, and
\[\widehat{\sigma}_{v,w}(z)=\sigma_{v,w}(z)e^{2\pi i \sum_{k=1}^{g}c_k \Im(u_k(z))}.\]
Then $\widehat{\sigma}_{w_1,w_2}$ is $A_j$-periodic since $\sigma_{v,w}$ is too and $\Im(u_k(A_j z))=\Im(u_k(z)+\delta_{jk})=\Im(u_k(z))$. Next,
\begin{align*}
\frac{\widehat{\sigma}_{v,w}(B_jz)}{\widehat{\sigma}_{v,w}(z)}&=\exp\left(2\pi i\left[u_j(v)-u_j(w)+\sum_{k=1}^{g}c_k \Im(\tau)_{jk}\right]\right).
\end{align*}
Since $\Im(\tau)$ is invertible, we may choose the coefficients $(c_j)_{j}$ such that $\widehat{\sigma}_{v,w}$ is $\Gamma$-periodic: it  is sufficient to check that for  every $j$ we have
\[\sum_{k=1}^{g}c_k\Im(\tau)_{jk}=u_j(w)-u_j(v),\]
which amounts to choosing
\begin{equation}\label{def_sigma}
\widehat{\sigma}_{v,w}(z)=\sigma_{v,w}(z)\exp\left(-2\pi i\sum_{1\leq k,l\leq g}(\Im(\tau)^{-1})_{k,l}\Im(u_k(z))(u_l(v)-u_l(w))\right).
\end{equation}
 The function 
\begin{equation}\label{eq_logsigma}
\log|\widehat{\sigma}_{v,w}(z)|=\log|\sigma_{v,w}(z)|+2\pi\sum_{1\leq k,l\leq g}(\Im(\tau)^{-1})_{k,l}\Im(u_k(z))\Im(u_l(v)-u_l(w))
\end{equation}
is thus a periodic harmonic function with singularities equal to $\log(|z-v|)+\mathcal{O}_{z\to v}(1)$ near $v$, and to $-\log(|z-w|)+\mathcal{O}_{v\to w}(1)$ near $w$.

Note that the function $\widehat{\sigma}_{v,w}(z)$ might look like it depends on the choice of the generic point $\xi\in u^{g-1}(\D^{g-1})$. However, changing $\xi$ only changes $\widehat{\sigma}_{v,w}(z)$ by a non-zero constant factor. 

\subsection{Pole of order $1$}\label{subsec_p1}

We use the previously defined functions, where $w$ is the location of the pole and $v$ is seen as a variable. Let
\begin{align*}
\wp_1(z,w)&=2\partial_v|_{v=w}\log|\sigma_{v,w}(z)|,\\
\widehat{\wp}_1(z,w)&=2\partial_v|_{v=w}\log|\widehat{\sigma}_{v,w}(z)|,\\
\end{align*}
where $\partial_v|_{v=w}$ is the conformal Wirtinger derivative along the variable $v$, taken at the value $v=w$. In other words
\begin{align}\label{eq_orderone}
\wp_1(z,w)&=\Theta(u(z)-u(w)-\xi)^{-1}\partial_v|_{v=w}\Theta(u(z)-u(w)-\xi)\\
&=-\frac{u'(w)\cdot\nabla\Theta(u(z)-u(w)-\xi)}{\Theta(u(z)-u(w)-\xi)}\\
\widehat{\wp}_1(z,w)&=\wp_1(z,w)-2\pi i \sum_{1\leq k,l\leq g}(\Im(\tau)^{-1})_{k,l}\Im(u_k(z))u_l'(w).
\end{align}

The function $\widehat{\wp}_1(z,w)$ is still $\Gamma$-periodic along the variable $z$: by applying the Wirtinger derivative in $v$ at $v=w$ to the periodicity relation
\[\forall\gamma\in\Gamma,\ \log|\widehat{\sigma}_{v,w}(z)|=\log|\widehat{\sigma}_{v,w}(\gamma(z))|\]
we obtain
\[\forall\gamma\in\Gamma,\ \widehat{\wp}_1(z,w)=\widehat{\wp}_1(\gamma(z),w).\]

\subsection{Poles of order $2$ to $g$}\label{subsec_phat}

We define by induction
\begin{align*}
\wp_{n}(z,w)&=\partial_w^{(n-1)}\wp_1(z,w)\\
\widehat{\wp}_{n},(z,w)&=\partial_w^{(n-1)}\widehat{\wp}_1(z,w).
\end{align*}

More precisely,
\begin{equation}\label{eq_wpnfullexpression}
\widehat{\wp}_n(z,w)=\wp_n(z,w)-2\pi i\sum_{1\leq k,l\leq g}(\Im(\tau)^{-1})_{k,l}\Im(u_k(z))u_l^{(n)}(w).
\end{equation}

As before, $\widehat{\wp}_n(\gamma(z),w)=\widehat{\wp}_n(z,w)$ for any $\gamma\in\Gamma$. While the computation of $u_l^{(n)}(w)$ is straightforward using its polynomial approximation, the function $\wp_n$ will be computed by an analytic finite difference method explained in section \ref{sec_num}. 

Note that the function $\wp_n(z,w)$ by itself is holomorphic with respect to $z$ but only verifies the periodicity relation
\begin{equation}\label{eq_periodwpn}
\wp_n(A_j(z),w)=\wp_n(z,w),\ \wp_n(B_j(z),w)=\wp_n(z,w)+2\pi i u_j^{(m)}(w)
\end{equation}


\subsection{Poles of order $g+1$ to $2g+1$}\label{subsec_pbar}

The fact that $w$ is not a Weierstrass point appears in this construction. We first define $(\wp_n)_{g+1\leq n\leq 2g+1}$ and $\widehat{\wp}_n(z,w)$ in the same way as earlier, and we let $C(w)\in M_{g\times g}(\C)$ to be defined as
\[C(w)_{k,n}=u_k^{(n)}(w),\ 1\leq k\leq g,\ 1\leq n\leq g\]
such that for any $n\in\{1,\hdots,g\}$:
\[\widehat{\wp}_n(z,w)=\wp_n(z,w)-2\pi i\sum_{k=1}^{g}\left(\Im(\tau)^{-1}C(w)\right)_{k,n}\Im(u_k(z)).\]
Since $w$ is not a Weierstrass point, then $C(w)$ is invertible (as a consequence of lemma \ref{lemma_wei}).

For any $n=g+1,\hdots,2g+1$, we then let
\[d_n(w)=C(w)^{-1}u^{(n)}(w)\in\C^g\]
be the unique vector that verifies for any $k=1,\hdots,g$:
\[u_k^{(n)}(w)=\sum_{p=1}^{g}d_{n,p}(w)u^{(p)}_k(w).\]
Letting
\[\widecheck{\wp}_n(z,w)=\wp_n(z,w)-\sum_{p=1}^{g}d_{n,p}(w)\wp_p(z,w),\]
this is a meromorphic function with a pole of order $n$ at $z=w$, and it is $\Gamma$-periodic. Indeed, it is equal by construction to

\[\widehat{\wp}_n(z,w)-\sum_{p=1}^{g}d_{n,p}(w)\widehat{\wp}_p(z,w)\]
which is periodic.

\subsection{Poles of order $2g+2$ and more}\label{subsec_ptilde}

We now build a periodic meromorphic function with a pole of order $n$ at $w$ for $n\geq 2g+2$. For any such $n$ there exists $m\in\N^*$, $q\in\{g+1,g+2,\hdots,2g+1\}$ such that
\[n=(g+1)m+q.\]
We then define
\[\tilde{\wp}_{n}(z,w)= \widecheck{\wp}_q(z,w)\widecheck{\wp}_{g+1}(z,w)^m.\]
Note that we \textit{could} follow the previous construction: the functions $\widecheck{\wp}_n$ and $\widetilde{\wp}_n$ differ (up to a nonzero multiplicative constant) by a combination of $\widecheck{\wp}_m$ (or $\tilde{\wp}_m$) for $m<n$. The same can be said between $\widecheck{\wp}_n$ and $\widehat{\wp}_n$; the construction of $\tilde{\wp}_n$ is only for computational purposes.

\subsection{Adaptation to Weierstrass points}\label{subsec_weiestrass}

Let us now sketch how each of the previous results apply to Weierstrass points. First, as in section \ref{subsec_aj} we compute (using the least square method from theorem \ref{the_conv}) an approximation of a basis of $\Om^1(X)$ and of the Abel-Jacobi map.

For the construction of the basis of harmonic functions with prescribed poles, for a general Weierstrass point the matrix $C(w)$ defined in subsection \ref{subsec_pbar} is not invertible in general: we need to compute more of the functions $(\wp_n)_n$ to get a full-rank matrix, as follows:

\begin{itemize}
\item[1) ]We define $\log|\widehat{\sigma}_{v,w}|$ as previously.
\item[2) ]We define similarly $\wp_1(z,w)$, $\widehat{\wp}_1(z,w)$.
\item[3) ]By differentiation in $w$ we build the functions $\widehat{\wp}_{n}$ as previously, this time for the values $n=2,3,\hdots,2g-1$.
\item[4) ]We define the non-square matrix $\tilde{C}(w)\in M_{g\times (2g-1)}(\C)$ by:
\[\tilde{C}(w)_{k,n}=u_k^{(n)}(w),\ 1\leq k\leq g,\ 1\leq n\leq 2g-1.\]
By lemma \ref{lemma_wei}, $\tilde{C}(w)$ has rank $g$, so we find $d_n(w)\in\C^{2g-1}$ (which may be defined uniquely, up to restricting $\tilde{C}(w)$ to the right subspace of $\C^{2g-1})$t such that
\[\tilde{C}(w)d_n(w)=c^{(n-1)}(w),\]
and we may then define $\widecheck{\wp}_n(z,w)$ as previously, for $n=2g,2g+1,\hdots,4g-1$.
\item[5) ]For $n\geq 4g$, we make the same Euclidean division $n=2g m+q$ where $2g\leq q<4g$, and we let
\[\tilde{\wp}_{n}(z,w)= \widecheck{\wp}_q(z,w)\widecheck{\wp}_{2g}(z,w)^m.\]
\end{itemize}


\section{Proof of the main approximation results}\label{sec_proofmain}

Here we give the proof of the two main results \ref{mr_finitepoles} and \ref{mr_bernsteinWalsch}. 

While the first result is a simple consequence of our construction and of the maximum principle, the second one is more involved: we have to adapt to general surfaces the classical proof of Bernstein-Walsch theorem in $\C$ (see \cite[Ch. VII]{W35} or \cite[Th 6.3.1]{R95} for a more recent reference, to which we will refer for intermediate results) to prove the approximation theorem on holomorphic functions (see theorem \ref{th_bersnteinwalsch_holo}). Then transmit this result to harmonic functions, by relying on an interior approximation of the set $K$.

\begin{proof}[Proof of \ref{mr_finitepoles}]

Let $h$ be a function as in the statement and let $w_j$ be one of the poles. We may then develop $h$ near this point (here we have fixed a chart that induced by the gluing of polygons)

\[h(z)=a_{j,0}\log(|z-w_j|)+\sum_{k=1}^{n}\Re\left[a_{j,k}(z-w_j)^{-k}\right]+\mathcal{O}_{z\to w_j}(1),\]
where $a_{j,0}$ is real and $a_{j,k}$ is complex. We remind that this is obtained by integrating the Laurent series decomposition of $\partial_{z}h(z)$, which is meromorphic with a pole at $w_j$. By construction, there exists a constant $c\neq 0$ such that
\[\widehat{\wp}_k(z,w_j)=c(-1)^k (k-1)!(z-w_j)^{-k}+\Oo_{z\to w_j}\left((z-w_j)^{-(k-1)}\right).\]
So there exists $b_{j,0},b_{j,1},b_{j,2},\hdots,b_{j,n}$ such that the function
\[h(z)-\sum_{j=1}^{c-1}b_{j,0}\log\left|\sigma_{w_j,w_{j+1}}(z)\right|-\Re\sum_{j=1}^{c}\left(\sum_{k=1}^{g}b_{j,n}\widehat{\wp}_k(z,w_j)+\sum_{k=g+1}^{2g+1}b_{j,k}\widecheck{\wp}_k(z,w_j)+\sum_{k=2g+1}^{n}b_{j,k}\tilde{\wp}_k(z,w_j)\right)\]
is harmonic in $X\setminus\{w_j,j=1,\hdots,N\}$ and bounded near each pole, thus it is a constant function, from which we get the theorem.
\end{proof}

\subsection{Approximation of holomorphic functions}


Given an oriented compact surface $X$ and a finite set $\mathcal{P}\subset X$, we let $\Oo_n(X\setminus\mathcal{P})$ be the (finite dimensional) space of meromorphic functions $f\in M\Oo(X)$ such that
$$(f)+n\sum_{w\in\mathcal{P}}[w]\geq 0$$
or, said differently, the space of functions $f\in\Oo(X\setminus \mathcal{P})$ such that for some constant $C>0$, and for any $z\in X\setminus \mathcal{P}$, we have $|f(z)|\leq C d_X(z,\mathcal{P})^{-n}$.

Before stating the first approximation result, we prove several preliminary lemmas. We let $\HH:X^2\to \R\cup\{-\infty\}$ be the Green function on $X$, defined by
\[\forall w\in X,\ \Delta_z \HH(\cdot,w)=2\pi\left(\delta_w-\frac{\mu_X}{\mu_X(X)}\right),\ \int_{X}\HH(z,w)d\mu_X(z)=0\]
where $\mu_X$ is the area measure induced by the hyperbolic metric. Note that $\HH(x,y)=\HH(y,x)$ with this normalization, as may be seen from integrating $z\mapsto \HH(z,x)\Delta_z\HH(z,y)-\HH(z,y)\Delta_z\HH(z,x)$ on $X\setminus (\D_{X}(x,\eps)\cup\D_{X}(y,\eps))$ for $\eps\to 0$ (where the disks are taken with respect to the hyperbolic metric).

\begin{lemma}\label{est_green}
Let $X$ be an oriented compact surface. There exists $c_1,c_2>0$ only depending on $X$, such that for any $(z,w)\in X^2$ we have
$$c_1+\log d_X(z,w)\leq \HH(z,w)\leq c_2.$$
\end{lemma}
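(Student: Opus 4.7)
The plan is to reduce the lemma to showing that the function
\[
\Psi(z,w) := \HH(z,w) - \log d_X(z,w), \quad (z,w)\in X^2\setminus\mathrm{diag},
\]
extends continuously to all of $X^2$. Once this is granted, $\Psi$ is uniformly bounded on the compact $X^2$, say $|\Psi|\le M$; since $d_X(z,w)\le\mathrm{diam}(X)$, the upper bound reads $\HH(z,w)\le \log\mathrm{diam}(X)+M=:c_2$, while $\HH(z,w)\ge \log d_X(z,w)-M$ gives the lower bound (setting $c_1=-M$ — the positivity $c_1>0$ in the statement is a convention, since only the existence of the additive constant is used in the sequel).

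To prove the continuous extension, I would fix $w_0\in X$ and work in a local isothermal chart $\zeta$ centered at $w_0$, in which the metric reads $g=e^{2\rho(\zeta)}|d\zeta|^2$ and the Laplace--Beltrami operator becomes $\Delta_g=e^{-2\rho}\Delta_0$. Translating the intrinsic identity $\Delta_z\HH(\cdot,w)=2\pi\,\delta_w-2\pi\mu_X/\mu_X(X)$ via the area form $dA = e^{2\rho}\,dxdy$ yields the flat distributional equation
\[
\Delta_0\HH(\zeta,w)=2\pi\,\delta^{\mathrm{Euc}}_{\zeta(w)}(\zeta) - \frac{2\pi\,e^{2\rho(\zeta)}}{\mu_X(X)}.
\]
Since $\Delta_0\log|\zeta-\zeta(w)|=2\pi\,\delta^{\mathrm{Euc}}_{\zeta(w)}$, the remainder $F(\zeta,w):=\HH(\zeta,w)-\log|\zeta-\zeta(w)|$ satisfies $\Delta_0 F(\cdot,w)=-2\pi e^{2\rho}/\mu_X(X)$, an elliptic equation with smooth right-hand side. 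Elliptic regularity gives smoothness of $F$ in $\zeta$, and the standard joint-regularity theory of the Green kernel on a compact Riemannian surface (or, equivalently, a direct argument exploiting the symmetry $\HH(z,w)=\HH(w,z)$) upgrades this to joint smoothness in $(\zeta,w)$ on a neighborhood of $(w_0,w_0)$.

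To reconcile $F$ with $\Psi$, I expand the Riemannian distance in these coordinates: $d_X(z,w)=e^{\rho(w)}|\zeta(z)-\zeta(w)|\bigl(1+O(|\zeta(z)-\zeta(w)|)\bigr)$, so
\[
\log d_X(z,w) - \log|\zeta(z)-\zeta(w)| = \rho(w) + O\bigl(|\zeta(z)-\zeta(w)|\bigr),
\]
a function that extends jointly continuously across the diagonal with limit $\rho(w)$. Combined with the continuity of $F$, this shows $\Psi$ is continuous on a neighborhood of $(w_0,w_0)$ in $X^2$. Off-diagonal $\Psi$ is continuous because $\HH$ and $d_X$ are, and since the diagonal of $X^2$ is compact it is covered by finitely many such coordinate neighborhoods, yielding continuity of $\Psi$ on all of $X^2$.

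The principal obstacle is the local parametrix / elliptic regularity step: one must track the conformal factor $e^{2\rho}$ carefully when converting the intrinsic equation into a flat-space distributional identity, and then promote separate $\zeta$-regularity of $F$ to joint regularity in $(\zeta,w)$. The remainder of the argument is a standard compactness and extreme-value manipulation.
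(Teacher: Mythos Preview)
Your proposal is correct and follows essentially the same strategy as the paper: subtract a logarithmic parametrix in local coordinates and control the remainder by elliptic regularity, then use compactness of $X$ (or $X^2$) to conclude. The one technical difference is that the paper multiplies the local parametrix $\log|\varphi^{-1}(z)-\varphi^{-1}(w)|$ by a cutoff $\chi\in\mathcal{C}^\infty_c(\D_1)$ so that the remainder $g_w=\HH(\cdot,w)-\chi\circ\varphi^{-1}\log|\varphi^{-1}(\cdot)-\varphi^{-1}(w)|$ is defined on all of $X$; one then bounds $\Vert\Delta g_w\Vert_{L^\infty(X)}$ and $\int_X g_w$ uniformly in $w$ and invokes the global elliptic estimate on the compact manifold to get $\Vert g_w\Vert_{L^\infty(X)}$ uniformly bounded. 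This sidesteps precisely the point you flag as the principal obstacle, namely promoting separate regularity of your remainder $F(\zeta,w)$ to joint regularity in $(\zeta,w)$: the cutoff trick trades that for a straightforward uniform-in-$w$ bound on a globally defined function. Your route via continuous extension of $\Psi$ to the diagonal of $X^2$ is equally valid and perhaps conceptually cleaner, but requires the extra joint-regularity step you mention.
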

\begin{proof}
Let $\varphi:\D_1\to X$ be a chart of $X$, and $\chi\in\mathcal{C}^\infty_c(\D_1,\R)$ such that $\chi=1$ on $\D_{\frac{3}{4}}$. Then for any $w\in \varphi(\D_{\frac{1}{2}})$, we may write
$$\mathcal{H}(z,w)=\chi\circ \varphi^{-1}(z)\log\left|\varphi^{-1}(z)-\varphi^{-1}(w)\right|+g_w(z)$$
where the first term vanishes outside $\varphi(\D_1)$ by convention, and $\Vert \Delta g_w\Vert_{L^\infty(X)}$, $\int_X g_w$ are bounded independently of $w$. By elliptic regularity we then have a uniform bound of $\Vert g_w\Vert_{L^\infty(X)}$ for any $w\in \varphi(\D_{\frac{1}{2}})$, and the result follows by covering $X$ with a finite number of such charts. 
\end{proof}

\begin{lemma}\label{lem_omz}
Let $X$ be an oriented compact surface, $w\in X$, then for any $z\in X\setminus\{w\}$ there exists a meromorphic $1$-form $\om_z$ with exactly a pole of order $1$ at $z$ and $w$, with residues $1,-1$ respectively, such that $z\mapsto \om_z$ is itself meromorphic with respect to $z$, with a pole at $z=w$ of order at most $2g-1$.
\end{lemma}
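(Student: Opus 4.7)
The plan is to start with the ``raw'' family of 1-forms built from theta functions in subsection \ref{subsec_log}, and then kill its $z$-monodromy by adding a holomorphic-in-$\zeta$ correction whose coefficients are pinned down by imposing $g$ vanishing conditions on Taylor coefficients at $w$. The choice of which Taylor coefficients to impose is governed by lemma \ref{lemma_wei}, and this is precisely what produces the bound $2g-1$.

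First I would set $\omega_z^{(0)}(\zeta) := d_\zeta\log\sigma_{z,w}(\zeta)$. A direct check from the periodicity relations \eqref{eq_period_theta} and the cancellation, in $d\log$, of the common zeros $w_1,\ldots,w_{g-1}$ of numerator and denominator shows that $\omega_z^{(0)}$ is a well-defined meromorphic 1-form on $X$ in $\zeta$, with exactly two simple poles at $\zeta=z,w$ of residues $+1$ and $-1$. Differentiating the quasi-periodicity of $\Theta$ in $z$ then gives $\omega_{A_j z}^{(0)}=\omega_z^{(0)}$ and $\omega_{B_j z}^{(0)}=\omega_z^{(0)}+2\pi i\,\omega_j$ with $\omega_j$ the $j$-th canonical holomorphic 1-form, so $\omega_z^{(0)}$ is well-defined on the universal cover in $z$ but multivalued on $X$.

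Next I would correct this $B_j$-monodromy while keeping the pole structure in $\zeta$. Fixing a local uniformizer $t$ at $w$, lemma \ref{lemma_wei} yields an index set $I=\{k_1<\cdots<k_g\}\subset\{0,\ldots,2g-2\}$ such that the map $\omega\in\Omega^1(X)\mapsto (\omega^{(k_j)}(w))_{j=1}^g$ is bijective; write $M_{ji}=\omega_i^{(k_j)}(w)$ for the (invertible) matrix. Set
\[ c_i(z) := -\sum_{j=1}^g (M^{-1})_{ij}\,(\omega_z^{(0)})^{(k_j)}(w), \qquad \omega_z := \omega_z^{(0)} + \sum_{i=1}^g c_i(z)\,\omega_i. \]
The correction is holomorphic in $\zeta$, so $\omega_z$ still has the desired simple poles with residues $\pm 1$, and by construction $\omega_z^{(k_j)}(w)=0$ for all $j$. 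The monodromy check under $z\mapsto B_l z$ uses $MM^{-1}=I$ to give $c_i(B_l z)-c_i(z)=-2\pi i\delta_{il}$, so $\sum_i c_i(z)\omega_i$ shifts by $-2\pi i\omega_l$, exactly cancelling the $+2\pi i\omega_l$ shift of $\omega_z^{(0)}$; $A_j$-invariance is automatic. Hence $\omega_z$ descends to a well-defined family on $X\setminus\{w\}$ that is meromorphic in $z$.

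To estimate the pole at $z=w$, I would perform the local Laurent expansion of $\omega_z^{(0)}$ near $\zeta=w$: the pole at $\zeta=z$ contributes $\tfrac{dt}{t-s(z)}$ with $s(z)=t(z)$, and for $|t|<|s(z)|$ the expansion $\tfrac{1}{t-s(z)}=-\sum_{k\geq 0}s(z)^{-k-1}t^k$ shows that $(\omega_z^{(0)})^{(k)}(w)$ has a pole in $z$ at $z=w$ of order at most $k+1$. Since $k_j\leq 2g-2$ for every $j$, each $c_i(z)$ has pole of order at most $2g-1$ at $z=w$; combined with $\omega_z^{(0)}(\zeta)\to 0$ as $z\to w$ for fixed $\zeta\neq w$ (the two poles coalesce with opposite residues), this gives the bound $2g-1$ on the pole order of $\omega_z$ at $z=w$. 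The main obstacle will be the Weierstrass case: when $w$ is a Weierstrass point, the naive choice $I=\{0,1,\ldots,g-1\}$ may fail to make $M$ invertible, and one must genuinely exploit the full injectivity statement of lemma \ref{lemma_wei} on $\C^{2g-1}$ to extract $g$ independent Taylor coefficients from $\{0,\ldots,2g-2\}$; this is also exactly where the bound $2g-1$ becomes sharp rather than the smaller $g$ obtained in the generic case.
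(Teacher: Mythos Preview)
Your broad strategy is the paper's: start from $\omega_z^{(0)}=\alpha_z:=d_\zeta\log\sigma_{z,w}(\zeta)$, compute its $B_j$-monodromy in $z$, and cancel it with a correction $\sum c_i(z)\,\omega_i$. The difference lies in how the $c_i(z)$ are manufactured. The paper does \emph{not} use $\zeta$-Taylor coefficients of $\omega_z^{(0)}$ at $w$; instead it recycles the functions $\wp_m(z,w)$ already built in \S\ref{subsec_p1}--\ref{subsec_phat} (these are $\partial_v$-derivatives of $\log|\sigma_{v,w}(z)|$ at $v=w$, i.e.\ derivatives in the \emph{pole location}, not in $\zeta$), quotes their periodicity \eqref{eq_periodwpm}, and solves $\tilde C(w)b^k=e_k$ to set $c_k=\sum_m b_m^k\,\wp_m(\cdot,w)$. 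Since each $\wp_m$ has its sole pole at $z=w$ of order $m\le 2g-1$ (this was established via the $\xi$-independence of $\widehat\sigma_{v,w}$ at the end of \S\ref{subsec_log}), the bound $2g-1$ is immediate. Your normalisation ``$\omega_z^{(k_j)}(w)=0$'' is a legitimate alternative and arguably more intrinsic, but two points need tightening. First, $(\omega_z^{(0)})^{(k_j)}(w)$ is only defined after subtracting the $z$-independent principal part $-dt/t$ at $\zeta=w$; you allude to this but it should be made explicit. Second, and more substantively, your pole analysis in $z$ tracks only the contribution of the $\tfrac{1}{\zeta-z}$ singularity; the theta-function formula for $\omega_z^{(0)}$ (hence for your $c_i$) has $g-1$ further apparent poles in $z$ at the remaining zeros of $\Theta(u(w)-u(z)-\xi)$. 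These are removable---ultimately because your $\omega_z$ is the \emph{unique} third-kind differential with poles at $z,w$ and the prescribed vanishing at $w$, hence independent of $\xi$---but you do not argue this, and it is exactly here that the paper's route (via the $\wp_m$) is cleaner.
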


\begin{proof}
In genus $0$ ($X=\widehat{\C}$), this is the role played by $\zeta\mapsto\frac{1}{\zeta-z}-\frac{1}{\zeta-w}$ (when $w\in \C$) or  $\frac{1}{\zeta-z}$ (when $w=\infty$).

In genus $1$, suppose (without loss of generality) $X$ is of the form $\C/L$ for some lattice $L=\Z+\tau\Z$ where $\Im(\tau)>0$. Let us consider the zeta function
\[\mathcal{Z}(z)=\frac{1}{z}+\sum_{\ell\in L\setminus\{0\}}\left\{\frac{1}{z-\ell}+\frac{1}{\ell}+\frac{z}{\ell^2}\right\}.\]
We remind the periodicity of $\mathcal{Z}$, due to Eisenstein:
\[z\mapsto \widehat{\mathcal{Z}}(z):=\mathcal{Z}(z)-\gamma_2 z-\frac{\pi}{\Im(\tau)}\ov{z}\text{ is }L\text{-periodic},\]
where $\gamma_2=\sum_{\ell\in L\setminus\{0\}}\frac{1}{\ell^4}$. Then we let
\[\om_z(\zeta)=\left(\mathcal{Z}(\zeta-z)+\mathcal{Z}(z-w)-\mathcal{Z}(\zeta-w)\right).\]
$\om_z$ is meromorphic in $\C$ with respect to $\zeta$, $z$ with the appropriate poles. Moreover, since $\mathcal{Z}(\zeta-z)+\mathcal{Z}(z-w)-\mathcal{Z}(\zeta-w)=\widehat{\mathcal{Z}}(\zeta-z)+\widehat{\mathcal{Z}}(z-w)-\widehat{\mathcal{Z}}(\zeta-w)$ then it is also $L$-periodic with respect to $\zeta,z$.

We now suppose $g\geq 2$. To build this $1$-form we work from the disk model $\D/\Gamma$. As in section \ref{subsec_log}, we let $\xi$ be a generic point of $\K+u^{g-1}(\D^{g-1})$, and
\[\sigma_{z,w}(\zeta)=\frac{\Theta(u(\zeta)-u(z)-\xi)}{\Theta(u(\zeta)-u(w)-\xi)}.\]
We let
\[\alpha_z(\zeta)=2\partial_{\zeta}\log\left|\sigma_{z,w}(\zeta)\right|=\frac{\sigma_{z,w}'(\zeta)}{\sigma_{z,w}(\zeta)}.\]
Then $\alpha_z\in M\Om^1(\D/\Gamma)$ for every $z$, with the poles exactly as in the statement of the lemma when $z\neq w$. Moreover, it is holomorphic in $\D$ with respect to $z$ (in particular $\alpha_w=0$). However, it is not periodic with respect to $z$, indeed a direct computation gives
\[\alpha_{A_j(z)}(\zeta)=\alpha_{z}(\zeta),\ \alpha_{B_j(z)}(\zeta)=\alpha_{z}(\zeta)+2\pi i \om_j(\zeta).\]
We define, for some functions $c_k(z)$ to be fixed,
\[\om_z(\zeta)=\alpha_z(\zeta)- \sum_{k=1}^{g}c_k(z)\om_k(\zeta),\]
where $c_k\in M\mathcal{O}(\D)$ has poles at $\Gamma\cdot w$ and must verify the periodicity relations
\[\forall j,k\in\{1,\hdots,g\},\ c_k(A_j(z))=c_k(z),\ c_k(B_j(z))=c_k(z)+2\pi i\delta_{j,k}.\]
We build $c_k$ as a combination of the functions $(\wp_m)_{m=1,2,\hdots,2g-1}$ (which could be replaced by $m=1,2,\hdots,g$ when $w$ is not a Weierstrass point) built in subsection (\ref{subsec_p1},\ref{subsec_phat}). We remind the periodicity of $\wp_m$:
\begin{equation}\label{eq_periodwpm}
\wp_{m}(A_j(z),w)=\wp_{m}(z,w),\ \wp_{m}(B_j(z),w)=\wp_{m}(z,w)+2\pi i \om_j^{(m-1)}(w).
\end{equation}

Letting (as in subsection \ref{subsec_weiestrass}) $\tilde{C}(w)=(\om_j^{(m-1)}(w))_{1\leq j\leq g,\ 1\leq m\leq 2g-1}$, then $\tilde{C}(w)$ has rank $g$ according to lemma \ref{lemma_wei} so for each $k$ there exists some $b^k=(b^k_m)_{0\leq m\leq 2g-2}\in\C^{2g-1}$ such that $\tilde{C}(w)b^k=e_k$. We then define
\[c_k(z)=\sum_{m=0}^{2g-2}b^k_m \wp_m(z,w).\]
This way $\om_z$ is periodic with respect to $z$.
\end{proof}

We are now ready to prove a first version of the approximation result for holomorphic function. Note that this version does not require the stronger regularity hypothesis $\mathcal{R}^{\mathrm{strong}}$.

\begin{theorem}\label{th_bersnteinwalsch_holo}
Let $X$ be an oriented compact surface, $K$ a compact subset of $X$, $\mathcal{P}$ a finite subset of $X\setminus K$ such that $(X,K,\mathcal{P})$ verify the hypothesis $\mathcal{R}^{\mathrm{weak}}$. 

Let $f:K\to\C$. Then for any $t>0$ the two following properties are equivalent:
\begin{itemize}
\item[(a) ]$f$ extends to a holomorphic function defined on $\left\{\mathcal{G}_{K,\mathcal{P}}<t\right\}$.
\item[(b) ]$\limsup_{n\to\infty}\inf_{f_n\in\Oo_n\left(X\setminus\mathcal{P}\right)}\Vert f-f_n\Vert_{L^\infty(K)}^{1/n}\leq e^{-t}$.
\end{itemize}
\end{theorem}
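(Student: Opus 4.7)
The plan is to adapt the classical Bernstein-Walsh proof on $\C$ (cf.\ \cite[Th.\ 6.3.1]{R95}) to the Riemann surface setting, replacing the classical Cauchy kernel $(\zeta-z)^{-1}$ with the meromorphic $1$-form $\omega_z$ from Lemma \ref{lem_omz}. The backbone of both directions is a Bernstein-Walsh inequality on $X$: for every $f\in\Oo_n(X\setminus\mathcal{P})$ and every $z\in X\setminus\mathcal{P}$,
\[
|f(z)|\leq \|f\|_{L^{\infty}(K)}\,e^{n\mathcal{G}_{K,\mathcal{P}}(z)}.
\]
The function $v(z):=\log|f(z)|-n\mathcal{G}_{K,\mathcal{P}}(z)$ is subharmonic on $X\setminus(K\cup\mathcal{P})$ and bounded above near each $w\in\mathcal{P}$ (the pole of $\log|f|$ being dominated by the $-n\log d_X(\cdot,w)$ singularity of $n\mathcal{G}_{K,\mathcal{P}}$), hence extends subharmonically to $X\setminus K$. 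Since $\mathcal{G}_{K,\mathcal{P}}\to 0$ nearly everywhere on $\partial K$, the non-thinness of $K$ at each of its points (hypothesis $\mathcal{R}^{\mathrm{weak}}$) upgrades this boundary estimate to $\limsup_{z\to z_0,\,z\in X\setminus K} v(z)\leq \log\|f\|_{L^{\infty}(K)}$ for every $z_0\in\partial K$; the maximum principle on the relatively compact open set $X\setminus K$ concludes.

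Direction (b) $\Rightarrow$ (a) is then immediate. Given $\|f-f_n\|_{L^{\infty}(K)}\leq e^{-nt'}$ for some $t'<t$ and all large $n$, the differences $g_n:=f_{n+1}-f_n\in\Oo_{n+1}(X\setminus\mathcal{P})$ satisfy $\|g_n\|_{L^{\infty}(K)}\leq 2e^{-nt'}$, and by Bernstein-Walsh $|g_n(z)|\leq 2e^{-nt'}e^{(n+1)\mathcal{G}_{K,\mathcal{P}}(z)}$. This is uniformly summable on compact subsets of $\Omega_{t'}:=\{\mathcal{G}_{K,\mathcal{P}}<t'\}$, so $(f_n)$ converges locally uniformly on $\Omega_{t'}$ to a holomorphic extension of $f$; taking $t'\uparrow t$ yields the extension to $\Omega_t$.

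For direction (a) $\Rightarrow$ (b), fix $t'<t$. Pick a base pole $w_0\in\mathcal{P}$; by Lemma \ref{lem_omz}, $\omega_z^{w_0}(\zeta)$ is a meromorphic $1$-form with only simple poles at $\zeta=z$ (residue $1$) and $\zeta=w_0$ (residue $-1$). Since $w_0\notin\overline{\Omega_{t'}}$, the residue theorem gives the Cauchy-type representation
\[
f(z)=\frac{1}{2\pi i}\oint_{\partial\Omega_{t'}}f(\zeta)\,\omega_z^{w_0}(\zeta),\qquad z\in K,
\]
after regularizing $\partial\Omega_{t'}$ by smooth sublevel sets of a continuous approximation of $\mathcal{G}_{K,\mathcal{P}}$. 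The approximants are built by Hermite interpolation at weighted Fekete points: with $N_n:=\dim\Oo_n(X\setminus\mathcal{P})=n|\mathcal{P}|-g+1$ (Riemann-Roch, for $n$ large) and a basis $(\psi_i)$ of $\Oo_n(X\setminus\mathcal{P})$, choose $z_1^{\ast},\dots,z_{N_n}^{\ast}\in K$ maximizing $|\det(\psi_i(z_j^{\ast}))|$, and let $f_n$ be the unique element of $\Oo_n(X\setminus\mathcal{P})$ interpolating $f$ at these nodes. A Hermite-type remainder computation using $\omega_z^{w_0}$ yields
\[
f(z)-f_n(z)=\frac{1}{2\pi i}\oint_{\partial\Omega_{t'}}f(\zeta)\,\frac{\Psi_n(z)}{\Psi_n(\zeta)}\,\omega_z^{w_0}(\zeta),
\]
where $\Psi_n\in\Oo_n(X\setminus\mathcal{P})$ is, up to a scalar, the unique function vanishing at every $z_j^{\ast}$, with an explicit form as a product of the theta-function sections $\widehat{\sigma}_{v,w}$ of \eqref{def_sigma} (adjusted by Abel's theorem to distribute the $n|\mathcal{P}|$ pole orders correctly).

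The main obstacle is the uniform estimate $|\Psi_n(z)/\Psi_n(\zeta)|\leq C_{\epsilon}\,e^{-n(t'-\epsilon)}$ for $z\in K$ and $\zeta\in\partial\Omega_{t'}$. This reduces to transferring weighted Fekete/equilibrium potential theory to $X$: one must show that the normalized counting measures of the Fekete points converge weakly-$\ast$ to the Green equilibrium measure of $K$ associated to $\mathcal{P}$, and that $\tfrac{1}{N_n}\log|\Psi_n|$ converges, uniformly on compact subsets where the Green function is continuous, to a multiple of $-\mathcal{G}_{K,\mathcal{P}}$ up to an additive normalization fixed by the Abel-Jacobi positioning of the $g-1$ ``ghost'' zeros of $\Psi_n$ not imposed by the interpolation conditions. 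The non-thinness hypothesis $\mathcal{R}^{\mathrm{weak}}$ is precisely what guarantees this convergence on all of $K$, yielding the claimed exponential rate, and this is where the surface-specific machinery (theta functions and the Abel-Jacobi map) is essential and where the bulk of the proof effort lies.
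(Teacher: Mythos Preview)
Your direction (b)$\Rightarrow$(a) is correct and essentially identical to the paper's. In (a)$\Rightarrow$(b), there is a genuine problem with the node function $\Psi_n$. You assert that $\Psi_n\in\Oo_n(X\setminus\mathcal{P})$ is, up to scalar, the unique nonzero element vanishing at the $N_n=\dim\Oo_n(X\setminus\mathcal{P})$ Fekete nodes; but the Fekete configuration is chosen precisely so that evaluation $\Oo_n(X\setminus\mathcal{P})\to\C^{N_n}$ is an isomorphism, hence only the zero function vanishes at all of them. The node function must therefore live in a strictly larger space, and on a compact surface of positive genus Abel's theorem then forces further ``ghost'' zeros whose positions you cannot prescribe; they may lie in $\Omega_{t'}\setminus K$ and contaminate the Hermite remainder with extra residues. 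Note also that $\widehat{\sigma}_{v,w}$ is not holomorphic in $z$ (it carries the factor $e^{2\pi i\sum c_k\Im(u_k(z))}$), so a product of such sections is not a meromorphic $\Psi_n$. Finally, the core analytic step---equidistribution of Fekete points toward the Green equilibrium measure on a compact Riemann surface---is left as an unproved assertion.

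The paper takes a different and shorter route for (a)$\Rightarrow$(b): after replacing $K$ by the smooth sublevel set $K_\eps=\{\mathcal{G}_{K,\mathcal{P}}\leq\eps\}$, it \emph{constructs} the nodes rather than optimizing them. Since $K$ now has interior, one fixes $q$ with $\ov{u}^{q}(K^q)=\C^g/(\Z^g+\tau\Z^g)$, chooses $n-q$ nodes on $\partial K$ whose empirical measure approximates the equilibrium measure $\nu$, and picks the last $q$ nodes in $K$ to force the Abel--Jacobi condition exactly. Abel's theorem then yields $F_n\in M\Oo(X)$ with $(F_n)=|\mathcal{P}|\sum_j[z_{j,n}]-n\sum_{w\in\mathcal{P}}[w]$ and all zeros in $K$. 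A direct computation with the Green kernel $\HH$ of Lemma~\ref{est_green} shows $\frac{1}{n}\log|F_n|\to\mathcal{G}_{K,\mathcal{P}}$ locally uniformly off $K\cup\mathcal{P}$, and the remainder formula with $\om_z$ gives the rate---no Fekete theory required.
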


\begin{proof}
We start with $(b\Rightarrow a)$. For any $n\geq 1$, denote $g_n=f_n-f_{n-1}$, such that
\[\limsup_{n\to\infty}\Vert g_n\Vert_{L^\infty(K)}^{1/n}\leq e^{-t}.\]
$\log|g_n|$ is subharmonic in $X\setminus \mathcal{P}$ and more precisely $\Delta\log|g_n|\geq -2\pi n\sum_{w\in\mathcal{P}}\delta_{w}$, so the function
$$z\in X\setminus K\mapsto \log\frac{|g_n(z)|}{\Vert g_n\Vert_{L^\infty(K)}}-\mathcal{G}_{K,\mathcal{P}}(z)$$ is subharmonic in $X\setminus K$ and nonpositive in $\partial K$. By the maximum principle (that we apply in each connected component of $X\setminus K$) it is nonpositive in $X\setminus K$. This implies that for any $s\in (0,t)$, and any $z\in  \{\mathcal{G}_{K,\mathcal{P}}<s\}$:
\[|g_n(z)|\leq \Vert g_n\Vert_{L^\infty(K)}e^{ns}.\]
So $\limsup_{n\to\infty}\Vert g_n\Vert_{L^\infty(\{\mathcal{G}_{K,\mathcal{P}}<s\})}^\frac{1}{n}\leq e^{s-t}(<1)$, meaning $\Vert g_n\Vert_{L^\infty( \{\mathcal{G}_{K,\mathcal{P}}<s\})}$ is summable for all $s<t$. Then $f$  extends holomorphically to $\left\{\mathcal{G}_{K,\mathcal{P}}<t\right\}$ as $f_0+\sum_{n\geq 1}g_n$.

We now prove to the other implication $(a\Rightarrow b)$. By our regularity hypothesis on $K$, $\mathcal{G}_{K,\mathcal{P}}$ is continuous and equal to $0$ on $\partial K$. We claim it is sufficient to prove the result on the set 
$$K_\eps=\left\{\mathcal{G}_{K,\mathcal{P}}\leq \eps\right\}$$
for arbitrarily small $\eps\in (0,t)$: indeed $\mathcal{G}_{K_\eps,\mathcal{P}}=\left(\mathcal{G}_{K,\mathcal{P}}-\eps\right)_+$ (since the difference of the two functions is harmonic in $X\setminus K_\eps$ and vanishes on $\partial K_\eps$), so proving the result on $K_\eps$ implies 
$$\limsup_{n\to\infty}\inf_{f_n\in\Oo_n(X\setminus\{w\})}\Vert f-f_n\Vert_{L^\infty(K)}^{1/n}\leq \limsup_{n\to\infty}\inf_{f_n\in\Oo_n(X\setminus\{w\})}\Vert f-f_n\Vert_{L^\infty(K_\eps)}^{1/n}\leq e^{\eps-t}.$$
Thus, in the rest of the proof, up to replacing $K$ with $K_\eps$ we assume without loss of generality that $\partial K$ is a finite union of smooth curves (since it is the case for $K_\eps$, for almost every $\eps$).

We denote by $\nu$ the measure supported on $K$ such that we have, in the sense of distributions,
$$\Delta \mathcal{G}_{K,\mathcal{P}}=2\pi \left(\nu-\sum_{w\in\mathcal{P}}\delta_{w}\right) \text{ in }\mathcal{D}'(X).$$
In particular $\nu(X)=|\mathcal{P}|$ (the cardinal of $\mathcal{P}$). We claim there exists a sequence of points $(z_{j,n})_{1\leq j\leq n,\ n\in\N^*}$ such that the associated measure
$$\nu_n=\frac{|\mathcal{P}|}{n}\sum_{j=1}^{n}\delta_{z_{j,n}}$$
converges to $\nu$, and such that for any large enough $n$ there exists a meromorphic function $F_n\in M\Oo(X)$ that verifies
\begin{equation}\label{eq_divFn}
(F_n)=|\mathcal{P}|\sum_{j=1}^{n}[z_{j,n}]-n\sum_{w\in\mathcal{P}}[w].
\end{equation}

\begin{proof}[Proof of the claim]
We remind that we denote by $\ov{u}:X\to \C^g/(\Z^g+\tau\Z^g)$ the quotiented Abel-Jacobi map, and
$$\ov{u}^{k}(z_1,\hdots,z_k)=\sum_{j=1}^{k}\ov{u}(z_j).$$
$\ov{u}^g$ is a diffeomorphism outside a codimension $1$ set, and $K$ has non-empty interior (since we assumed $K$ to be smooth without loss of generality) so $\ov{u}^g(K^g)$ contains an open set of $\C^g/(\Z^g+\tau\Z^g)$. As a consequence, since $\C^g /(\Z^g+\tau\Z^g)$ is compact, for some large enough $m\in\N$ we have 
\[\ov{u}^{mg}(K^{mg})=\underbrace{\ov{u}^g(K)+\hdots+\ov{u}^g(K)}_{m\text{ times}}=\C^g/(\Z^g+\tau\Z^g).\]
We fix $q=mg$. Let $z_{1,n},\hdots,z_{n-q,n}$ be points of $\partial K$ chosen such that
$$\frac{|\mathcal{P}|}{n}\sum_{j=1}^{n-q}\delta_{z_{j,n}}\underset{n\to +\infty}{\rightharpoonup^*}\nu$$
Since $\ov{u}^q$ is surjective from $K$ to $\C^g/(\Z^g+\tau\Z^g)$, we may choose $z_{n-q+1,n},z_{n-q+2,n},\hdots,z_{n,n}\in K$ that verify
$$|\mathcal{P}|\sum_{j=n-q+1}^{n}\ov{u}(z_{j,n})=n \sum_{w\in\mathcal{P}}\ov{u}(w)-|\mathcal{P}|\sum_{j=1}^{n-q}\ov{u}(z_{j,n})$$
Then $(z_{1,n},\hdots,z_{n,n})$ verifies the claim: indeed we have $\nu_n\rightharpoonup^*\nu$ and
$$|\mathcal{P}|\sum_{j=1}^{n}\ov{u}(z_{j,n})-n\sum_{w\in\mathcal{P}}\ov{u}(w)=0$$
so by Abel's theorem (see for instance \cite[Th. III.6.3]{FKH92}), there exists a function $F_n\in M\Oo(X)$ verifying \eqref{eq_divFn}.
\end{proof}
Let
$$G_n(z):=\frac{|\mathcal{P}|}{n}\sum_{j=1}^{n}\mathcal{H}(z,z_{j,n})-\sum_{w\in\mathcal{P}}\mathcal{H}(z,w).$$
By lemma \ref{est_green} we have $\sup_{n}\int_{X}\left|G_n(z)\right|^2d\mu_X(z)<+\infty$. Let $(n_k)_{k\in\N}$ be some extraction of $\N$, then there is some $G\in L^2(X)$ and some subsequence $(n_{k_i})_i$ such that we have the weak convergence 
$$G_{n_{k_i}}\underset{L^2(X)}{\rightharpoonup}G\text{ as }i\to +\infty.$$
For any $\varphi\in \mathcal{C}^2(X)$ we have $\int_{X}G_{n}\Delta\varphi d\mu_X=2\pi \int_{X}\varphi d\left(\nu_{n}-\sum_{w\in\mathcal{P}}\delta_{w}\right)$ which passes to the limit as
$$\Delta G=2\pi (\nu-\delta_{w})\text{ in }\mathcal{D}'(X).$$
Since $\int_{X}Gd\mu_X=0$, we uniquely identify $G=\mathcal{G}_{K,\mathcal{P}}-\fint_{X}\mathcal{G}_{K,\mathcal{P}}d\mu_X$. Thus, the full sequence $G_n$ converges weakly in $L^2(X)$ to $\mathcal{G}_{K,\mathcal{P}}-\fint_{X}\mathcal{G}_{K,\mathcal{P}}d\mu_X$. Since each $G_n$ is harmonic in $X\setminus (K\cup \mathcal{P})$, the convergence is locally uniform in $X\setminus (K\cup \mathcal{P})$.

We remind that the points $(z_{j,n})$ were chosen so that there exists some meromorphic $F_n$ verifying \eqref{eq_divFn}. Up to multiplying $F_n$ by a scalar factor, we have
$$\frac{1}{n}\log\left|F_n(z)\right|=\fint_{X}\mathcal{G}_{K,\mathcal{P}}d\mu_X+G_n(z).$$
So the left-hand side converges locally uniformly in $X\setminus (K\cup\mathcal{P})$ to $\mathcal{G}_{K,\mathcal{P}}$. Since $\log|F_n|$ is subharmonic in $X\setminus \mathcal{P}$ we obtain that:
\begin{equation}\label{eq_convergenceFn}
\lim_{n\to +\infty}\Vert F_n\Vert_{L^\infty(K)}^{1/n}\leq 1 
\end{equation}

Consider now $K_t=\{\mathcal{G}_{K,\mathcal{P}}\leq t\}$. Up to a small downward perturbation of $t$ (i.e. replacing $t$ with $t-\delta$ for some generic arbitrarily small $\delta>0$) we may suppose that $\partial K_t$ is smooth. Let $w_0\in\mathcal{P}$, and $\om_z$ be the $1$-form built in lemma \ref{lem_omz} from the base point $w_0$. Let then
\[f_n(z)=\frac{1}{2\pi i}\oint_{\partial K_t}\frac{F_n(\zeta)-F_n(z)}{F_n(\zeta)}f(\zeta)\om_z(\zeta)d\zeta,\ \forall z\in K_t\]
By residue theorem,
\[f_n(z)=-\sum_{j=1}^{n}f(z_{j,n})F_n(z)\text{res}\left(\zeta\mapsto \frac{\om_z(\zeta)d\zeta}{F_n(\zeta)},z_{j,n}\right),\]
so $f_n$ is meromorphic with
$$(f_n)\geq -(n+2g-1)[w_0]-n\sum_{w\in\mathcal{P}\setminus \{w_0\}}[w]$$
since $F_n$ has order $n$ at the points of $\mathcal{P}$, and $\om_z$ has order at most $2g-1$ at $w_0$. Again by residue theorem $f(z)=\frac{1}{2\pi i}\oint_{\partial K_t}f(\zeta)\om_z(\zeta)d\zeta$ for any $z\in K_t$, so
\begin{align*}
f(z)-f_n(z)=\frac{1}{2\pi i}\oint_{\partial K_t}\frac{F_n(z)}{F_n(\zeta)}f(\zeta)\om_z(\zeta)d\zeta.
\end{align*}
For some constant $C>0$, and for any $z\in K$, we obtain:
\[|f(z)-f_n(z)|^\frac{1}{n+2g-1}\leq \left(C\frac{\Vert F_n\Vert_{L^\infty(K)}}{\inf_{\partial K_t}| F_n|}\right)^\frac{1}{n+2g-1}\underset{n\to\infty}{\longrightarrow}e^{-t}.\]
\end{proof}

\subsection{From holomorphic to harmonic functions}\label{subsec_holotoharmo}

The growth of harmonic function outside a compact set depending on their degree is less straightforward than it is for holomorphic function: as an example (which is  discussed already in \cite{W29}) we may take a segment $[-1,1]$ in the surface $\widehat{\C}$ (so that harmonic functions with a pole at infinity are exactly the harmonic polynomials). Then there is no ``potential function'' $V:\C\to\R_+$ such that for any harmonic polynomial $h$ of degree $n$ we would have $|h|\leq Ce^{nV}\Vert h\Vert_{[-1,1]}$. Indeed, this is immediately contradicted by considering $h(z)=\lambda\Im(z)$ for $\lambda\to \infty$.

While this counterexample is a clear consequence of the non-injectivity of the restriction map $h\in\HH_{n}(\C)\to h|_{[-1,1]}\in L^\infty([-1,1])$, and may be adjusted by choosing a ``correct'' right-inverse to this restriction, whether this can be done for more general compacts set is unclear.

For this reason, we will assume a stronger regularity hypothesis on $K$, that guarantees a fixed growth of harmonic function as described above. In particular, this hypothesis forces $K$ to have non-empty interior, which implies that $h\mapsto h|_{K}$ is injective. 
A higher dimensional version of this is Plesniak's theorem that may be found in \cite[Cor. 3.2]{plesniak1984criterion}, see also \cite{BL07} for a self-contained exposition). The method of Plesniak relies on writing harmonic polynomials as restrictions of holomorphic functions in $z,\ov{z}$, i.e. by writing
$$\Re(f(z))=\frac{f(z)+g(\ov{z})}{2}$$
where $g(z)=\ov{f(\ov{z})}$ is holomorphic. Then they establish a limit growth for the function $f(z)+g(\zeta)$ for any $z,\zeta\in \C^2$. In our case several elements of the basis cannot be written as the real part of some meromorphic function, so this method does not seem to extend : instead we estimate the Wirtinger derivative of our function in a sufficiently large compact subset of the interior of $K$, and use the regularity hypothesis $\mathcal{R}^{strong}$ to prove that it implies the same bound in a neighbourhood of $K$.

\begin{lemma}\label{lem_constructionom}
Let $X$ be a compact oriented surface of genus $g\geq 0$, let $w\in X$ and let $D$ be some open neighbourhood of $w$. Then there exists $\om\in M\Om^1(X)$ such that $\om$ has no zeroes or poles outside $D$, and $\om$ may vanish only at $w$.
\end{lemma}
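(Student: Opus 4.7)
The plan is to prescribe the divisor of $\om$. I will look for $\om \in M\Om^1(X)$ with
\[(\om) = m[w] - Q,\]
where $m \in \N$ and $Q$ is an effective divisor of degree $m - (2g-2)$ supported in $D \setminus \{w\}$. Any such $\om$ has poles confined to $D$ and zeros contained in $\{w\}$, so it satisfies the conclusion of the lemma. By the standard correspondence between non-zero meromorphic $1$-forms and elements of the canonical linear system, the existence of such an $\om$ is equivalent to the divisor $m[w] - Q$ being linearly equivalent to a canonical divisor $K$: once $m[w] - Q = K + (f)$ for some meromorphic $f$, it suffices to set $\om := f \alpha_0$ for any $\alpha_0 \in M\Om^1(X)$ with $(\alpha_0) = K$.

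By Abel's theorem (\cite[Th. III.6.3]{FKH92}), this linear equivalence translates into the degree matching $m - \deg Q = 2g - 2$ together with the Jacobian equality
\[\ov u(Q) \equiv m\, \ov u(w) - \kappa \pmod{\Z^g + \tau \Z^g},\]
where $\kappa \in J := \C^g/(\Z^g + \tau \Z^g)$ denotes the image of the canonical class. The task thus reduces to finding, for some suitably large $m$, an effective divisor $Q$ of degree $k := m - (2g-2)$ supported in $D\setminus\{w\}$ whose Abel-Jacobi image in $J$ equals $m\, \ov u(w) - \kappa$.

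The only non-trivial step, and the main obstacle, is a localized surjectivity statement: for $k$ large enough, $\ov u^k\bigl((D\setminus\{w\})^k\bigr) = J$. Since the Jacobian determinant of $\ov u^g$ is $\det(\om_j(z_k))$, which vanishes on a proper analytic subset of $X^g$, the non-empty open set $(D\setminus\{w\})^g$ contains non-critical points at which $\ov u^g$ is a local biholomorphism; hence the image $V := \ov u^g\bigl((D\setminus\{w\})^g\bigr)$ contains a non-empty open subset of $J$. The difference set $V - V$ is then an open symmetric neighborhood of $0$ in the compact connected topological group $J$; the subgroup it generates is both open and, by connectedness of $J$, equal to $J$, so by compactness a finite sum $N(V-V)$ already covers $J$. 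Translating back, $\ov u^{2Ng}\bigl((D\setminus\{w\})^{2Ng}\bigr) = J$, and appending extra arbitrary points of $D\setminus\{w\}$ extends surjectivity to every $k \geq 2Ng$.

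Choosing $m$ with $k = m - (2g-2) \geq 2Ng$ then produces the required $Q$, hence $\om$. The argument is uniform in $g$: in genus $0$ the Jacobian is trivial and only the degree condition matters, while in genus $1$ we have $\kappa = 0$ and the Jacobian equation reduces to a single point condition on $J \cong X$.
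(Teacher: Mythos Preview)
Your approach is essentially the paper's: prescribe the divisor of $\om$ via surjectivity of the Abel--Jacobi map on high symmetric powers of $D$, then invoke Abel's theorem to produce the corresponding meromorphic $1$-form. The overall structure is correct.

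There is, however, a gap in the surjectivity step. From $N(V-V)=J$ you conclude $\ov u^{2Ng}\bigl((D\setminus\{w\})^{2Ng}\bigr)=J$, but this does not follow: the identity $N(V-V)=NV+(-NV)$ only expresses each point of $J$ as a \emph{difference} of two elements of $NV$, whereas what you need is an expression as a \emph{sum} of $2N$ elements of $V$. Since negation on $J$ does not carry $V$ into any $kV$ in general, the ``translating back'' is a non sequitur. A clean fix: $V$ contains a Euclidean ball $B_r(p_0)$ in the flat metric on $J\cong\R^{2g}/\Lambda$; since $B_r(0)+B_r(0)=B_{2r}(0)$ in this metric, one has $kV\supseteq B_{kr}(kp_0)$, and this equals $J$ as soon as $kr$ exceeds the diameter of $J$. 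With this correction your argument goes through and coincides with the paper's proof.
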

\begin{proof}
This is obvious in genus $0$, so we consider only genus $g\geq 1$. Let $$\sum_{j=1}^{2g-2}[\zeta_j]$$
be the divisor of some holomorphic $1$-form denoted $\om_0$. Let $\ov{u}:X\to \C^g/(\Z^g+\tau\Z^g)$ be the Abel-Jacobi map. $\ov{u}^g$ is open, so there exists some large enough $q\in \N$ such that $\ov{u}^{q}(D^q)=\C^g/(\Z^g+\tau\Z^g)$.

As a consequence, There exists $z_1,\hdots,z_q\in D$ such that
$$\sum_{j=1}^{q}\ov{u}(z_j)=(q-2g+2)\ov{u}(w)+\sum_{j=1}^{2g-2}\ov{u}(\zeta_j)$$
By Abel's theorem, there exists $f\in M\Oo(X)$ such that
$$(f)=(q-2g+2)[w]+\sum_{j=1}^{2g-2}[\zeta_j]-\sum_{j=1}^{q}[z_j]$$
and $\om=f\om_0$ is a meromorphic $1$-form that satisfies the conclusion of the lemma.
\end{proof}

\begin{lemma}\label{lem_harmonicgrowth}
Let $X$ be an oriented compact surface, $K$ a compact subset of $X$, $\mathcal{P}$ a finite subset of $X\setminus K$ such that $(X,K,\mathcal{P})$ verify the hypothesis $\mathcal{R}^{\mathrm{strong}}$. Then for any $t>0$, $\delta>0$, there exists $C>0$ such that for any $n\in\N$, $h\in\mathcal{H}_n(X\setminus\mathcal{P})$:
$$\Vert h\Vert_{L^\infty(\{\mathcal{G}_{K,\mathcal{P}}<t\})}\leq Ce^{(t+\delta)n}\Vert h\Vert_{L^\infty(K)}.$$
\end{lemma}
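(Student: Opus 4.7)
The plan is to reduce the estimate to the meromorphic maximum principle already used at the start of the proof of Theorem~\ref{th_bersnteinwalsch_holo}, by passing to the Wirtinger derivative $\alpha:=\partial h$. For $h\in\mathcal{H}_n(X\setminus\mathcal{P})$, the object $\alpha$ is a meromorphic $(1,0)$-form on $X$ whose only poles lie in $\mathcal{P}$, each of order at most $n+1$. Since $h$ is globally single-valued on $X\setminus\mathcal{P}$, the periods of $\alpha$ along closed loops in $X\setminus\mathcal{P}$ are purely imaginary, and $h$ is recovered from $\alpha$ via $h(z)=h(z_0)+2\Re\int_{z_0}^{z}\alpha$ along any path avoiding $\mathcal{P}$. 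The proof then proceeds in three steps: (i) control $|\alpha|$ on a slight interior retract $K'\subset\mathring K$ by $\Vert h\Vert_{L^\infty(K)}$ using an interior (Cauchy) estimate; (ii) propagate this bound to $\{\mathcal{G}_{K,\mathcal{P}}<t\}$ via the meromorphic Bernstein inequality applied to a suitable quotient $\alpha/\omega$; (iii) recover $h$ from $\alpha$ by integration along a short path.

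\textbf{Retract and quotient $1$-form.} Given $\delta>0$, hypothesis $\mathcal{R}^{\mathrm{strong}}$---every point of $K$ is non-thin in $\mathring K$---allows us to find a compact $K'\subset\mathring K$ (for instance $K'=\{z\in K:\mathrm{dist}(z,\partial K)\geq\eta\}$ for small enough $\eta>0$) such that $\mathcal{G}_{K',\mathcal{P}}\leq\mathcal{G}_{K,\mathcal{P}}+\delta/2$ everywhere on $X$. This is a standard continuity property of potentials under non-thinness, in the spirit of Plesniak \cite[Cor.~3.2]{plesniak1984criterion}. Since $h$ is harmonic in the $\eta/2$-neighborhood of $K'$, an elementary Cauchy estimate yields $\Vert\alpha\Vert_{L^\infty(K')}\leq C_1\Vert h\Vert_{L^\infty(K)}$. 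Fix now a base point $w_0\in\mathcal{P}$ and apply Lemma~\ref{lem_constructionom} with a neighborhood $D$ of $w_0$ small enough that $D\cap\overline{\{\mathcal{G}_{K,\mathcal{P}}\leq t\}}=\emptyset$, producing a meromorphic $1$-form $\omega$ whose zeros are supported at $w_0$ (with some order $k\geq 0$ depending only on the genus) and whose poles lie in $D$. Then $f:=\alpha/\omega$ is a meromorphic function on $X$ with pole of order at most $n+1+k$ at $w_0$ and at most $n+1$ at the other points of $\mathcal{P}$; in particular $f\in\Oo_{n+1+k}(X\setminus\mathcal{P})$. The same subharmonic maximum-principle argument used in the implication $(b\Rightarrow a)$ of Theorem~\ref{th_bersnteinwalsch_holo} (applied in each connected component of $X\setminus K'$, each of which meets $\mathcal{P}$ by $\mathcal{R}^{\mathrm{weak}}$) gives
\[
|f(z)|\leq\Vert f\Vert_{L^\infty(K')}\,e^{(n+1+k)\mathcal{G}_{K',\mathcal{P}}(z)}\qquad\forall z\in X\setminus K'.
\]
Since $|\omega|$ is bounded above and below by positive constants on the compact set $\overline{\{\mathcal{G}_{K,\mathcal{P}}\leq t\}}\cup K'$ (which lies outside $D$ and away from the zero of $\omega$ at $w_0$), combining with step (i) yields
\[
\Vert\alpha\Vert_{L^\infty(\{\mathcal{G}_{K,\mathcal{P}}<t\})}\leq C_2\,e^{(n+1+k)(t+\delta/2)}\Vert h\Vert_{L^\infty(K)}.
\]

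\textbf{Integration and main obstacle.} For any $z\in\{\mathcal{G}_{K,\mathcal{P}}<t\}$, the connected component $V$ of this open set containing $z$ meets $K$ (as $\mathcal{G}_{K,\mathcal{P}}\equiv 0$ on $K$); choose $z_0\in K\cap V$ and a path $\gamma\subset V$ from $z_0$ to $z$ of length at most some $L=L(t,X)$. The reconstruction formula and the previous bound give
\[
|h(z)|\leq |h(z_0)|+2L\,\Vert\alpha\Vert_{L^\infty(\gamma)}\leq\bigl(1+2LC_2\,e^{(n+1+k)(t+\delta/2)}\bigr)\Vert h\Vert_{L^\infty(K)}.
\]
Since $(n+1+k)(t+\delta/2)\leq n(t+\delta)+O(1)$ for large $n$, with finitely many exceptional $n$ absorbed into the constant, the required bound follows. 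The main delicate point is the potential-theoretic approximation $\mathcal{G}_{K',\mathcal{P}}\leq\mathcal{G}_{K,\mathcal{P}}+\delta/2$ in step (i): this is where $\mathcal{R}^{\mathrm{strong}}$ enters essentially, since under only $\mathcal{R}^{\mathrm{weak}}$ the set $K$ could have empty interior and no admissible $K'\subset\mathring K$ would exist. The remainder is a classical combination of interior regularity, the meromorphic maximum principle, and a path-integral estimate.
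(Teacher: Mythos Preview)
Your proposal is correct and follows essentially the same route as the paper: pass to $\partial_z h$, control it on an interior retract $K_{-\eta}\subset\mathring K$ via a Cauchy estimate, divide by a meromorphic $1$-form from Lemma~\ref{lem_constructionom} to obtain an element of $\Oo_{n+O(1)}(X\setminus\mathcal{P})$, apply the subharmonic maximum principle with $\mathcal{G}_{K_{-\eta},\mathcal{P}}$, and integrate along a bounded-length path. The only point you treat more briskly than the paper is the convergence $\mathcal{G}_{K_{-\eta},\mathcal{P}}\to\mathcal{G}_{K,\mathcal{P}}$ (equivalently your inequality $\mathcal{G}_{K',\mathcal{P}}\leq\mathcal{G}_{K,\mathcal{P}}+\delta/2$), which the paper proves explicitly by showing the monotone limit is subharmonic, vanishes on $\mathring K$, and then invoking non-thinness of $\mathring K$ at each point of $K$ to conclude it vanishes on all of $K$; this is precisely where $\mathcal{R}^{\mathrm{strong}}$ is used, as you correctly identify.
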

\begin{proof}
For any $\eps>0$, let
$$K_{-\eps}=\left\{z\in K:d_X(z,\partial K)\geq\eps\right\}.$$
Since $K$ has non-empty interior, for a small enough $\eps$ the compact set $K_{-\eps}$ has non-empty interior, in particular it is non-polar so its Green function  $\mathcal{G}_{K_{-\eps},\mathcal{P}}$ is well-defined.

We claim that $\mathcal{G}_{K_{-\eps},\mathcal{P}}$ converges locally uniformly in $X\setminus (K\cup\mathcal{P})$ to $\mathcal{G}_{K,\mathcal{P}}$.

\begin{proof}[Proof of the claim]

By maximum principle, for any $0<\eps<\eps'$ small enough such that $\mathcal{G}_{K_{-\eps'},\mathcal{P}}$ exists, we have
$$\mathcal{G}_{K,\mathcal{P}}\leq\mathcal{G}_{K_{-\eps},\mathcal{P}}\leq \mathcal{G}_{K_{-\eps'},\mathcal{P}}.$$
Let $\mathcal{G}(z)$ be the limit of $\mathcal{G}_{K_{-\eps},\mathcal{P}}(z)$.  Since $\mathcal{G}_{K_{-\eps},\mathcal{P}}$ is harmonic in $X\setminus (K\cup\mathcal{P})$, so is $\mathcal{G}$ and the convergence $\mathcal{G}_{K_\eps,\mathcal{P}}\to \mathcal{G}$ is local uniform in $X\setminus (K\cup\mathcal{P})$.

By \cite[Th. 2.4.6]{R95}, $\mathcal{G}$ is subharmonic on $X\setminus \mathcal{P}$ and for every (small enough) $\eps>0$ we have
$$\mathcal{G}_{K,\mathcal{P}}(z)\leq \mathcal{G}(z)\leq \mathcal{G}_{K_{-\eps},\mathcal{P}}(z).$$
This implies that $\mathcal{G}=0$ in the interior of $K$ and
$$\mathcal{G}(z)=-\log d_X(z,w)+\mathcal{O}_{z\to w}(1)$$
for any $w\in\mathcal{P}$. 

To prove that $\mathcal{G}=\mathcal{G}_{K,\mathcal{P}}$, it is enough to prove that $\mathcal{G}(z_0)=0$ for every $z_0\in K$. By our hypothesis $\mathcal{R}^{strong}$, the interior of $K$ is non-thin at $z_0$, and $\mathcal{G}$ is zero is the interior of $K$, thus by the definition of non-thinness we have
$$\mathcal{G}(z_0)\leq \limsup_{z\to z_0,z\in\intK}\mathcal{G}(z)=0.$$
Thus, $\mathcal{G}=\mathcal{G}_{K,\mathcal{P}}$, and this concludes the claim.
\end{proof}

Let now $t,\delta>0$ be as in the statement. We lose no generality in supposing that $\{\mathcal{G}_{K,\mathcal{P}}<t\}$ is smooth.
We fix a small enough $\eps>0$ such that
$$\{\mathcal{G}_{K,\mathcal{P}}<t\}\subset \{\mathcal{G}_{K_{-\eps},\mathcal{P}}<t+\delta\}.$$
Consider $\om$ the $1$-form given by lemma \ref{lem_constructionom} applied to some $w_0\in\mathcal{P}$, with the neighbourhood $D=\left\{\mathcal{G}_{K_{-\eps},\mathcal{P}}>t+\delta \right\}$; in particular, $\om$ has no zeroes or poles in $\left\{\mathcal{G}_{K_{-\eps},\mathcal{P}}<t+\delta \right\}$. 
We remind the following estimate for holomorphic functions in the unit disk $\D_1\subset\C$: for some constant $C>0$ and for any $g\in\Oo(\D_1)$, we have $\Vert g'\Vert_{L^\infty(\D_{1/2})}\leq C\Vert\Re(g)\Vert_{L^\infty(\D_1)}$. 

Let $h\in\mathcal{H}_n(X\setminus\mathcal{P})$ (for some $n\geq 1$). By covering $K_{-\eps}$ with a finite number of disk of radius $\eps$ (so included in $K$), we find that for some constant $C_{\eps}>0$:
$$\Vert\partial_zh/\om\Vert_{L^\infty(K_{-\eps})}\leq C_{\eps}\Vert h\Vert_{L^\infty(K)}$$
Observe that the function
$\partial_zh/\om$ 
(which is well-defined as a ratio of $1$-forms) belongs to $\Oo_{n+N+1}(X\setminus\mathcal{P})$ (where $N$ is the order of vanishing of $\om$ at $w_0$). By maximum principle on the subharmonic function
$$z\in X\setminus K_{-\eps}\mapsto \frac{1}{n+N+1}\log\left|\frac{\partial_z h}{\om}\right|-\mathcal{G}_{K_{-\eps},\mathcal{P}}(z)$$
we obtain that for every $z\in \left\{\mathcal{G}_{K,\mathcal{P}}<t \right\}(\subset \left\{\mathcal{G}_{K_{-\eps},\mathcal{P}}<t+\delta \right\})$, we have
$$\left|\frac{\partial_z h}{\om}(z)\right|\leq C_\eps e^{(t+\delta)n}\Vert h\Vert_{L^\infty(K)}$$
Since $\left\{\mathcal{G}_{K,\mathcal{P}}<t \right\}$ is assumed to be smooth, for every $z\in \left\{\mathcal{G}_{K,\mathcal{P}}<t \right\}$ there is a smooth curve $$c_z:[0,1]\to \left\{\mathcal{G}_{K,\mathcal{P}}<t \right\}$$
such that $c_z(0)\in K$, $c_z(1)=z$, and the length of $c_z$ is bounded independently of $z$ (by some positive constant $L>0$). Then for any $z\in \left\{\mathcal{G}_{K,\mathcal{P}}<t \right\}$ we have $h(z)=h(c_z(0))+2\Re\int_{c_z}\frac{\partial_z h}{\om}\om$ so for some constant $C>0$ (depending on $t,\eps,\delta$), we get:
$$|h(z)|\leq Ce^{n(t+\delta)}\Vert h\Vert_{L^\infty(K)}.$$
\end{proof}

\begin{proof}[Proof of \ref{mr_bernsteinWalsch}]
We suppose first that $X$ is oriented.
We start from the implication $(a)\Rightarrow(b)$. Consider  $(h_n)_{n\in\N}$ a sequence with $h_n\in\HH_n(X\setminus\mathcal{P})$ such that
$$\limsup_{n\to +\infty}\Vert h_n-h\Vert_{L^\infty(K)}^{1/n}\leq e^{-t}.$$
Let $s\in (0,t)$, $\delta\in (0,t-s)$. We apply lemma \ref{lem_harmonicgrowth} to the sequence $(h_n-h_{n-1})_n$: for some constant $C>0$ (that is independent of $n$) we have
$$\forall z\in\{\mathcal{G}_{K,\mathcal{P}}<s\},\ |h_n(z)-h_{n-1}(z)|\leq C e^{n(s+\delta)}\Vert h_n-h_{n-1}\Vert_{L^\infty(K)}$$
The right-hand side is summable, and we let
$$\tilde{h}(z)=h_0(z)+\sum_{n\geq 1}(h_n(z)-h_{n-1}(z))$$
be its limit. Then $\tilde{h}$ is harmonic in $\{\mathcal{G}_{K,\mathcal{P}}<t\}$ (since we may consider $s$ arbitrarily close to $t$) and coincides with $h$ on $K$.\bigbreak

We then prove the implication $(b)\Rightarrow(a)$. Consider $h$ a harmonic function as in the statement of the theorem, defined in some neighbourhood of $K$ denoted $$K_t=\left\{\mathcal{G}_{K,\mathcal{P}}\leq t\right\}.$$
Up to a small perturbation of $t$ we may suppose $\partial K_t$ is smooth. We let $D_1,\hdots,D_c$ be the connected components of $X\setminus K_t$, and $w_1,\hdots,w_c$ is an (arbitrarily) choice of elements of $\mathcal{P}$ such that $w_j\in D_j$. Let
$$r_j(h)=\frac{1}{2\pi i}\oint_{D_j}\partial_z h.$$
By residue theorem in $K_t$ we have $\sum_{j=1}^{c}r_j=0$. Let
$$h^{res}=\sum_{j=1}^{c}r_j(h)\mathcal{H}(z,w_j).$$
Then $h^{res}$ is harmonic such that $r_j(h^{res})=r_j(h)$ for every $j$, and the periods of $\partial_z(h-h^{res})$ are well-defined in the following sense: for any smooth open set $E\subset X$ such that $\partial E\subset K_t$, we have
$$\oint_{\partial E}\partial_z(h-h^{res})=0.$$
Indeed, let $J=\{j=1,\hdots,c\text{ s.t. }D_j\subset E\}$, then since $r_j(h)=r_j(h^{res})$ for every $j\in J$ we have
$$\oint_{\partial E}\partial_z(h-h^{res})=\oint_{\partial (E\setminus \cup_{j\in J}D_j)}\partial_z(h-h^{res})=0.$$
Let now $V$ be the subspace of the homology space $H_1(X,\C)$ that may be represented by a linear combination of curves in $K_t$: by the above consideration, $\partial_z(h-h^{res})$ defines an element of $V^*$ by through the linear application
\begin{equation}\label{def_linearmap}
L:\mathcal{C}\in V\mapsto \int_{\mathcal{C}}\partial_z(h-h^{res})
\end{equation}
Let now
$$G(w_1)=\{n\in\N^*:\ell(n[w_1])=\ell((n-1)[w_1])\}.$$
We remind that $G(w_1)=\{1,\hdots,g\}$ when $w_1$ is not a Weierstrass point, and in general it is a subset of $\{1,\hdots,2g-1\}$ of cardinal $g$ such that there is no meromorphic function on $X$ with only a pole of order $m\in G(w_1)$ at $w_1$. Denote $W$ the function space spanned by
$$\left(\widehat{\wp}_m(\cdot,w_1)\right)_{m\in G(w_1)},\ (\ov{\widehat{\wp}_m(\cdot,w_1)})_{m\in G(w_1)}.$$
We claim the bilinear form
$$
B:\begin{cases}
H_1(X,\C)\times W&\to \C\\
\mathcal{C},\rho&\mapsto \oint_{\mathcal{C}}\partial_z\rho
\end{cases}
$$
is nondegenerate.
\begin{proof}[Proof of the claim]
Since $H_1(X,\C)$ and $W$ have the same dimension, it is sufficient to prove that if for some $\rho\in W$ and for every $\mathcal{C}\in H_1(X,\C)$, we have $\oint_{\mathcal{C}}\partial_z\rho=0$, then necessarily $\rho=0$.
Under this condition there exists some $f\in M\Oo(X)$ with only a pole at $w_1$ such that $\partial_z f=\partial_z\rho$. Let $(\lambda_m)_{m\in G(w_1)}$, $(\mu_m)_{m\in G(w_1)}$ be coefficients such that
$$\rho=\sum_{m\in G(w_1)}\left(\lambda_m\widehat{\wp}_m(\cdot,w_1)+\mu_m\ov{\widehat{\wp}_m(\cdot,w_1)}\right)$$
Suppose there is some nonzero coefficient $\lambda_m$. Let $m_0$ be the maximal index $m$ such that $\lambda_m\neq 0$, then $\partial_z \rho$ has a pole of order $m_0+1$ at $w_1$ (because $\partial_z\ov{\widehat{\wp}_m(z,w_1)}$ is holomorphic for any $m$).
Thus, $f$ has a pole of order $m_0$ at $w_1$, which is in contradiction with the fact that $m_0$ is in $G(w_1)$.

As a consequence $\lambda_m=0$ for every $m$, so $\partial_z f\in \Om^1(X)$. In particular, $f$ has no pole at $w_1$, so it must be constant, meaning $\partial_z \rho=0$, which implies $\rho=0$.
\end{proof}

We now conclude the proof: let $L'\in H_1(X,\C)^*$ be an extension of $L$ (defined in \eqref{def_linearmap}), and $h^{per}\in W$ such that $B(\cdot,h^{per})=L'(\cdot)$. Then for any closed loop $\mathcal{C}$ in $K_t$, we have
$$\int_{\mathcal{C}}\partial_z(h-h^{res}-h^{per})=L(\mathcal{C})-B(\mathcal{C},h^{per})=0.$$
Thus, there exists $f\in\Oo(K_t)$ such that
$$h=h^{res}+h^{per}+\Re(f).$$
By theorem \ref{th_bersnteinwalsch_holo}, there exists a sequence $(f_n)_n$ such that $f_n\in\Oo_n(X\setminus \mathcal{P})$ and
$$\limsup_{n\to+\infty}\Vert f-f_n\Vert_{L^\infty(K)}^{1/n}\leq e^{-t}.$$
Then $h_n=h^{res}+h^{per}+\Re(f_n)$ verifies the conclusion $(a)$.

When $X$ is not oriented, consider $\pi:X^{or}\to X$ the orientable double cover of $X$, with the associated involution $s:X^{or}\to X^{or}$. 
We identify
$$\mathcal{G}_{\pi^{-1}(K),\pi^{-1}(\mathcal{P})}=\mathcal{G}_{K,\mathcal{P}}\circ\pi.$$
Implication $(a)\Rightarrow(b)$: if $(a)$ holds then $\limsup_{n\to +\infty}\Vert h\circ \pi-h_n\circ\pi\Vert_{L^\infty(\pi^{-1}(K))}^{1/n}\leq e^{-t}$, so $h\circ \pi$ extends to some harmonic function $H$ in $\left\{\mathcal{G}_{\pi^{-1}(K),\pi^{-1}(\mathcal{P})}<t\right\}$. Since $H\circ s=H$, then $h$ extends harmonically to $\left\{\mathcal{G}_{K,\mathcal{P}}<t\right\}$ by $h\circ\pi=H$.\\
Implication $(b)\Rightarrow(a)$: if $(b)$ holds then $h\circ \pi$ is harmonic on $\left\{\mathcal{G}_{\pi^{-1}(K),\pi^{-1}(\mathcal{P})}<t\right\}$. So there exists some sequence $(H_n)_n$ with $H_n\in \HH_n(X^{or}\setminus\pi^{-1}(\mathcal{P}))$ such that
$$\limsup_{n\to +\infty}\Vert H_n-h\circ\pi\Vert_{L^\infty(\pi^{-1}(K))}^{1/n}\leq e^{-t}.$$
Define the sequence $h_n\in\HH_n(X\setminus\mathcal{P})$ by $h_n\circ\pi=\frac{1}{2}(H_n+H_n\circ s)$, then $\Vert h_n-h\Vert_{L^\infty(K)}\leq \Vert H_n-h\circ\pi\Vert_{L^\infty(\pi^{-1}(K))}$, so the conclusion $(a)$ holds.
\end{proof}

Let us now discuss the geometrical conditions under which $\mathcal{R}^{strong}$ holds, as described in remark \ref{rem_rstrong}. Assume $h$ is a subharmonic function defined in a neighbourhood $U$ of some point $p\in K$, such that $h$ is nonpositive on the interior of $K$. We suppose without loss of generality that $U=\varphi(\D_1)$ for some chart $\varphi:\D_1\to X$ such that $\varphi(0)=p$. 

Assume the first condition of remark \ref{rem_rstrong}. Let $c:(-1,1)\to X$ be analytic such that $c(0)=p$, $c((0,1))\subset \intK$. There exists some $r\in (0,1]$ such that $c((-r,r))\subset U$. Then $\varphi^{-1}\circ c_{|(-r,r)}$ is real-analytic from $(-r,r)$ to $\D_1$, so there exists some $s\in (0,r]$ and some holomorphic function $g:\D_s\to \D_1$ such that $g(t)=\varphi^{-1}\circ c(t)$ for any $t\in (-s,s)$. Now, $h\circ\varphi\circ g$ is subharmonic on $\D_s$ and nonpositive on $(0,s)$, so by \cite[Lem. 3.8.4]{R95} we have $h\circ g(0)=h(p)\leq 0$.

Assume now the second condition of remark \ref{rem_rstrong}. In particular there is a connected component $V$ of $X\setminus K$ such that $p\in\ov{V}$, and for some small enough $r_0\in (0,1]$ we have $\varphi^{-1}(U\cap V)\cap\partial\D_r\neq\emptyset$, for every $r\in (0,r_0)$. By \cite[Th. 5.4.2]{R95}, $\varphi^{-1}(U\cap V)$ is not thin at $0$, so $h(\varphi^{-1}(0))=h(p)\leq 0$.

In both cases, we obtain that every point of $K$ is non-thin in $\intK$, so the hypothesis $\mathcal{R}^{strong}$ holds.

\section{Computational approach} \label{sec_num}

We describe in this section the several steps required to compute a numerical  approximation of the harmonic basis introduced in section \ref{sec_basis}.  Following \cite{frauendiener2015computational}, we illustrate our method on three distinct surfaces of genus 2 described by their Fenchel-Nielsen coordinates (see also section \ref{sec_gluing}):

\begin{itemize}
    \item {\bf The surface with symmetry group} $D_6 \times \mathbb{Z}_2$ for which
$$(l_1, t_1; l_2, t_2; l_3, t_3) = (2 \acosh(2), 0 ; 2 \acosh(2), 0; 2 \acosh(2), 0)$$
which coordinates correspond to four regular orthogonal hexagons with no twists.
    \item {\bf The Bolza surface} which can either be parametrized by Fenchel-Nielsen  coordinates with one single nonzero twist
$$(l_1, t_1; l_2, t_2; l_3, t_3) = (2 \acosh(3 + 2 \sqrt2), \frac12 ; 2 \acosh(1 + \sqrt2), 0; 2 \acosh(1 + \sqrt2), 0)$$
or equivalently by the uniform representation
$$(l_1, t_1; l_2, t_2; l_3, t_3) = (l,t;l,t;l,t)$$
where $l = \acosh(1 + \sqrt2)$ and $ t = \frac1l \acosh\left(\frac{2\sqrt{3 + \sqrt2}}{7}\right)$.

\item {\bf The Gutzwiller octagon} of coordinates $(l_1, t_1; l_2, t_2; l_3, t_3)$ given by 
$$  \left(
2\acosh\left(\frac{\sqrt2 + 1}{\sqrt2}\right), \frac12; 
4 \acosh\left(\frac{\sqrt2 + 1}{\sqrt2}\right), \frac14; 
2\acosh\left(\frac{\sqrt2 + 1}{\sqrt2}\right), \frac12\right).$$

\end{itemize} 
Every triple of alternate edge lengths $(l_1, l_2, l_3)$ determines a unique orthogonal hexagon in Poincare's disk up to automorphisms of the disk. The practical construction of these hexagons for three alternate lengths is described in \cite{hubbard2016teichmuller} chapter 3. We plot  the tilings of the hyperbolic disk obtained by the four orthogonal hexagons of every surface in figure \ref{fig:tilings}.

\begin{figure}[!htb]  
    \centering
    \begin{tabular}[t]{ccc}
        \includegraphics[width=0.3\textwidth]{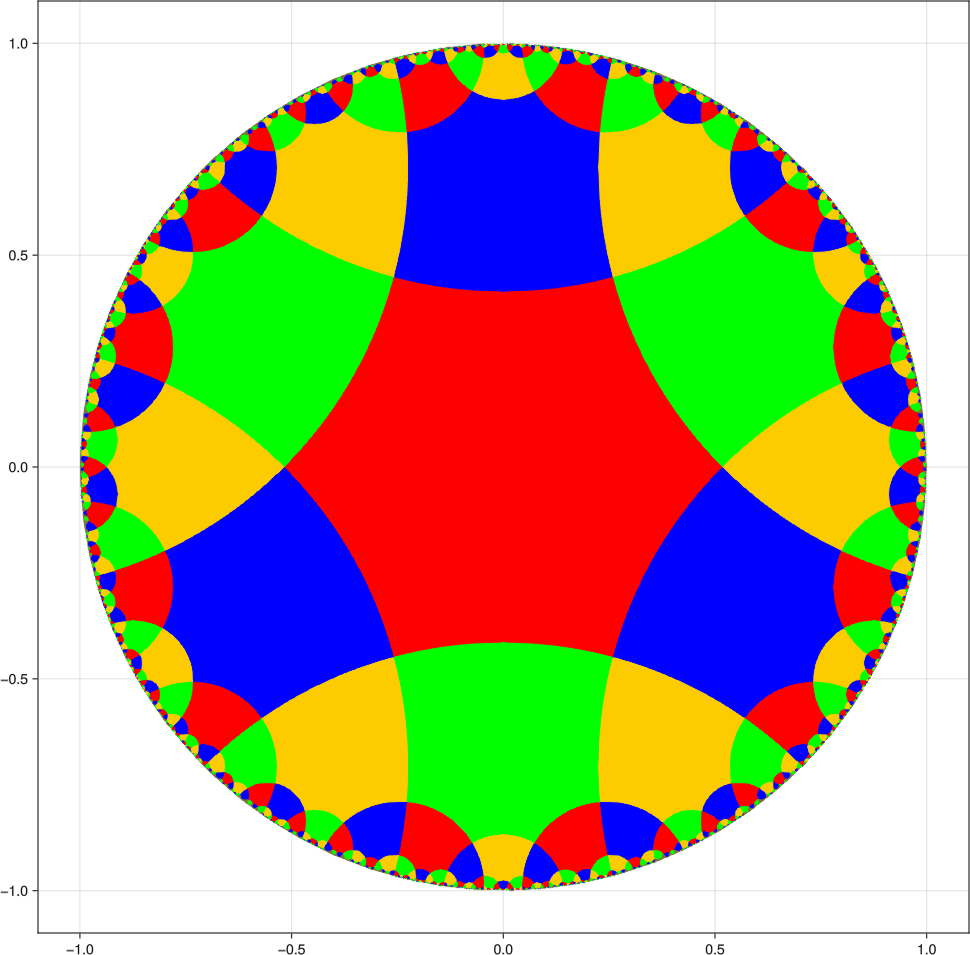}
        &
        \includegraphics[width=0.3\textwidth]{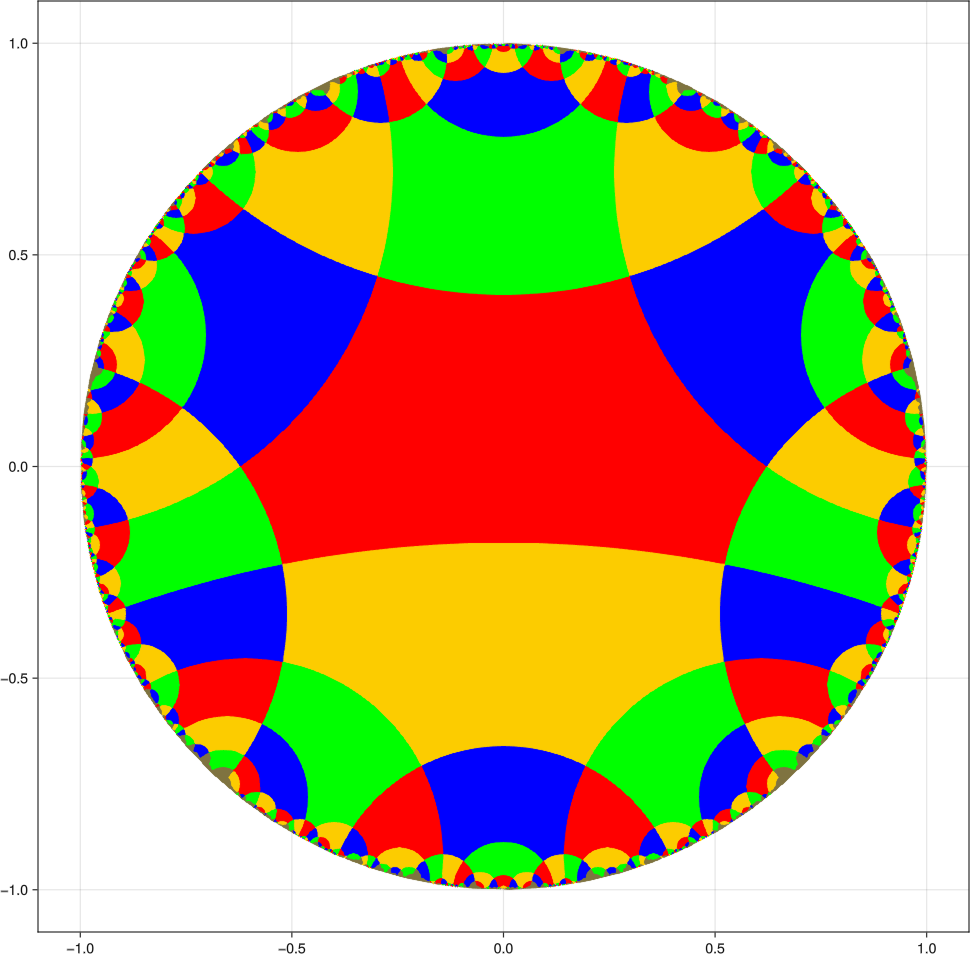}
        &
        \includegraphics[width=0.3\textwidth]{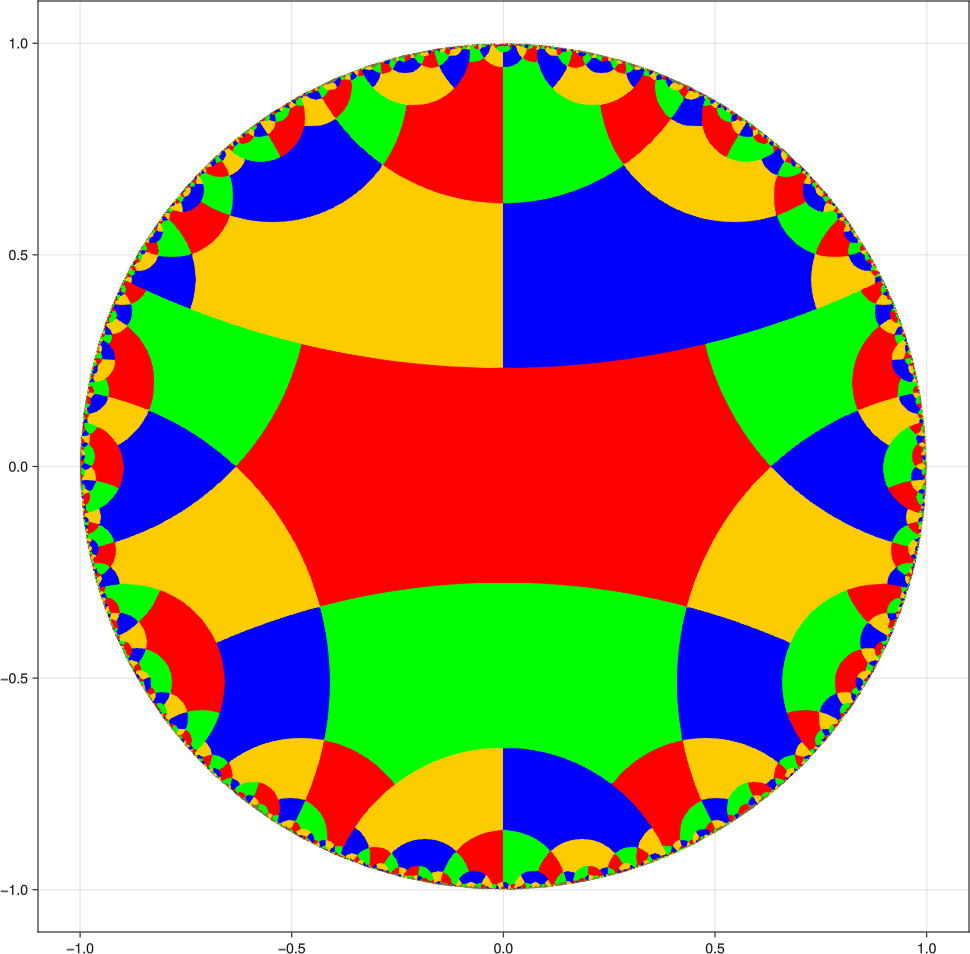}
    \end{tabular}
    \caption{$D_6 \times \mathbb{Z}_2$, Bolza and Gutzwiller hexagonal tilings (left to right).}
    \label{fig:tilings}
\end{figure}

\subsection{Approximation of 1-forms} \label{sec_comp}

Our construction of a basis of harmonic functions strongly relies on the approximation of a basis of one 1-forms. We already described in the introduction the least square approximation procedure that we implemented to obtain a polynomial approximation of this basis. We prove in section \ref{sec_leastsquareconv} the spectral convergence in infinity norm of the approximation with respect to the degree of the polynomials.

Assuming that the base point is not a Weierstrass point (see remark 4 below theorem \ref{the_conv} for a discussion of the detection and correction when the center is a Weierstrass point), we compute for some large $N$ the optimal polynomials $Q^N_l=(Q^{N}_{l|p})_{p=1,\hdots,m}\in \C_{N}[z]^m$ defined as
\[\text{argmin}\left\{E^N(P),\ P\in \C_N[z]^m:\forall j=1,\hdots,g,\ P_{|1}^{(j-1)}(0)=\delta_{j,l}\right\}.\]
These act as an approximate basis of the $1$-forms $\Om^1(X)$.  To illustrate this convergence, we list in table \ref{tab_oneform} the a posteriori errors obtained with several degrees $N$ of approximation. The number of sampling points $S(N)$ to enforce the periodicity conditions on every circular edge is fixed to be $3N$. To avoid round-off errors, we implemented the method using arbitrary precision with a number of $256$ bits which correspond to a maximal floating point precision of $154$ digits. The computation in the required precision of the polynomial approximations required from several seconds to a few minutes (for $N=10$ to $N=200$). Observe that these computations have to be considered as pre-treatement: to evaluate our full basis of harmonic functions we only need to perform these computations once.
The error values given in table  \ref{tab_oneform} correspond to the maximal error on periodicity conditions 
\[P_{|p}(z)=g_{p,i}'(z)P_{|q}(g_{p,i}(z))\]
numerically evaluated on $6N$ random sampling points $z$ on every circular edge. As expected, we recover a spectral precision with respect to $N$ for all three test cases.
%

\begin{center}
    \begin{table}[ht]
        \begin{center}
            \begin{tabular}{|l|c|c|c|c|c|c|} 
                \hline
                & $N=10$ & $N=20$ & $N=50$ &  $N=100$ &  $N=200$ \\ 
                \hline
                $D_6 \times \mathbb{Z}_2$ & 6.93e-04 & 1.58e-06 & 6.89e-16 & 
                1.20e-31 &  6.29e-59\\ 
                \hline
                Bolza surface & 6.15e-03 & 1.52e-05 & 9.24e-14 &
                6.06e-27 &  1.79e-52\\ 
                \hline
                Gutzwiller octagon & 1.95e-02 & 2.61e-05 & 1.04e-13&
                7.76e-27 & 1.15e-51\\ 
                \hline
            \end{tabular}
        \end{center}
        \vspace{-0.3cm}
        \caption{Spectral convergence of numerical a posteriori periodicity errors}
        \label{tab_oneform} 
    \end{table}
\end{center}

\subsection{Approximation of the period matrix {and the Abel-Jacobi map}}\label{sec_comp_period}
The period matrix of a surface is a fundamental data required in our construction to build meromorphic functions of prescribed orders. We explained in previous section how to compute an approximation of a basis of holomorphic  $1$-forms. The computation of a period matrix reduces to the evaluation of path integrals of these forms along the canonical basis $(a_j, b_j)_{j=1,...,g}$ introduced in \ref{sec_abel}.  Starting from the polynomials $Q^N_l=(Q^{N}_{l|p})_{p=1,\hdots,m}\in \C_{N}[z]^m$ defined previously, we first let

\begin{align*}
A^N=\left(\left(\int_{a_k}Q^{N}_{l}\right)_{1\leq k,l\leq g}\right),\end{align*}
where the integral of an element $Q=(Q_{|1},\hdots,Q_{|m})\in\C_{N}[z]^m$ along a smooth (say analytic by part) loop $c:[0,1]\to X$ is defined as follows: $[0,1]$ may be partitioned with intervals
\[0=t_0<t_1<t_2<\hdots<t_r=1,\]
such that for any $i\in \{0,\hdots,r-1\}$, $t\in [t_i,t_{i+1}]\mapsto c(t)$ lies in a fixed polygon $H_{p_i}$, and $c(t_{i})$ is in the boundary of $H_{p_i}$ for $0<i<r$. We then let
\[\int_{c}Q=\sum_{i=0}^{r-1}\int_{t_i}^{t_{i+1}}Q_{|p_i}(c(t))c'(t)dt.\]
While this definition may seem standard, we remind that in this case the functions $(Q_{|p})_{p=1,\hdots,m}$ do not verify exactly the periodicity condition.

The matrix $A^N$ is invertible at least for a large enough $N$, and we make the change of variable
\[\tilde{Q}^{N}_k=\sum_{l=1}^{g}((A^{N})^{-1})_{l,k}Q^{N}_l\]
such that $\int_{a_k}\tilde{Q}_l=\delta_{k,l}$. This way $\tilde{Q}^{N}_l$ is an approximation of the $l$-th canonical $1$-form $\om_l$ verifying condition \eqref{eq_canonical_1form}.
We then let
\begin{align*}
\tau^N&=\left(\left(\frac{1}{2}\int_{b_k}\tilde{Q}^{N}_l+\frac{1}{2}\int_{b_l}\tilde{Q}^{N}_k\right)_{1\leq k,l\leq g}\right)
\end{align*}
the approximate period matrix. Note that we force its symmetry by construction: we could define $\tau^N_{k,l}=\int_{b_k}\tilde{Q}^{N}_l$, and since $\tilde{Q}^{N}_l$ is close to $\om^l$, $\tau^N$ would be close to being symmetric. In our simulations, we used the very efficient  \emph{Arb} C library \cite{Johansson2017arb, arbtheta} which has been now incorporated in the \emph{Flint} library. The functions available in the library may provide results at a precision depending on the precision of the input. In particular, the call of the multidimensional theta function requires the input period matrix to be symmetric, which motivates this forced symmetry.

We evaluate the numerical precision of our approximations of period matrices on the well known case of Bolza surface. O. Bolza computed analytically the associated period matrix in  \cite{bolza1887binary}:

$$\tau = \begin{pmatrix}
\frac{-1+i\sqrt2}{2} & \frac12 \\
\frac12 & \frac{-1+i\sqrt2}{2} 
\end{pmatrix}.
$$
Moreover, the Siegel reduction of $\tau$ is given by 
$$\tau_{Siegel} = (A \tau + B) (C \tau + D) ^{-1},$$
where
$$
\begin{pmatrix}
   A & B\\  
   C & D
\end{pmatrix}
=
\begin{pmatrix}
   -1  &0   &-1  &0 \\ 
    0  &1   &0  &0  \\ 
    1  &0   &0  &0  \\ 
    0  &0   &0  &1
 \end{pmatrix}.
$$
Using previous formula we obtain
$$\tau_{Siegel} = - \frac13 \begin{pmatrix}
     1 &  1\\  
     1 &  1
 \end{pmatrix}
 + i 
 \begin{pmatrix}
    \frac{2\sqrt2}3   &  -\frac{\sqrt2}{3}\\
    -\frac{\sqrt2}{3} &   \frac{2\sqrt2}3
 \end{pmatrix}.
 $$
 We reproduce below in table \ref{tab_periodmatrix} the infinity norm error between $\tau_{Siegel}$ and the approximations $\tau_{Siegel}^N$. As expected, we recover the same order of convergence as the one associated to the periodicity errors of previous section. We observed the same qualitative behavior when approximating the period matrices of the  $D_6 \times \mathbb{Z}_2$ surface and Gutzwiller octagon.

\begin{center}
    \begin{table}[ht]
        \begin{center}
            \begin{tabular}{|l|c|c|c|c|c|c|} 
                \hline
                & $N=10$ & $N=20$ & $N=50$ &  $N=100$ & $N=200$ \\ 
                \hline
                $||\tau_{Siegel} - \tau_{Siegel}^N||_\infty$ & 8.95e-05 &5.69e-08 & 4.51e-17&
                6.33e-30 & 1.53e-55\\ 
                \hline
            \end{tabular}
        \end{center}
        \vspace{-0.3cm}
        \caption{Spectral convergence of the approximation of Bolza period matrix.}
        \label{tab_periodmatrix} 
    \end{table}
\end{center}

Finally, the approximation of the Abel-Jacobi map $u=(u_1,\hdots,u_g)$ is defined from taking a primitive of $\tilde{Q}^{N}$ on each polygon and adjusting it with a constant: we fix a path $c:[0,1]\to X$ that meet every polygon exactly once and define on each polygon $H_p$ the vector-valued polynomial $U^{N}_{|p}=(U^{N}_{1|p},\hdots,U^{N}_{g|p})\in\C_{N+1}[z]^g$ such that for every $l=1,\hdots,g$
\[(U^{N}_{l|p})'=\tilde{Q}^{N}_{l|p}\text{ and }t\mapsto U^{N}_{l}(c(t))\text{ is continuous.}\]
While this choice of path may seem arbitrary, we remind that any different choice would only differ by an element of $\Z^g+\tau \Z^g$ on each polygon, which will be transparent after a composition of $\Theta$.

Finally, we compute the approximate Riemann constant
\[\K^N_j=\frac{1}{2}\tau^N_{jj}-\sum_{k=1}^{g}\int_{a_k}U_j^N(z)(U_k^N)'(z)dz,\]
where the last term may be computed exactly since $U_j^N(z)(U_k^N)'(z)\in \C_{2N+1}[z]^m$.

\subsection{Approximation of meromorphic function of logarithmic order and of order $1$}

We described in sections \ref{subsec_log} and \ref{subsec_p1}, that the evaluation of functions $\log|\widehat{\sigma}_{v,w}|$ and $\widehat{\wp}_1(z,w)$ requires the knowledge of the period matrix $\tau$ of the surface (to evaluate the $\Theta$ function) and a basis of $1$-forms. We already discussed  the accurate approximation of these data in the two previous sections.

Several libraries provide implementations of the vector valued Riemann theta function for a given $\tau$ matrix in the Siegel upper half-space $\mathbb{H}_g$ (the space of symmetric complex matrices with positive definite imaginary part). 

All $\Theta$ function evaluations of this section were performed with a precision of $512$ using formula \ref{eq_logsigma} and \ref{eq_orderone}. We a posteriori evaluated the numerical precision computing the periodicity errors of our approximations of the $\log|\widehat{\sigma}|$ and $\widehat{\wp}_1$ function. Using the same criteria as in section \ref{sec_comp}, we observed  periodicity errors of the same order with respect to $N$ as the one given in table \ref{tab_oneform}. We plot in figures \ref{fig_logD6}, \ref{fig_logBolza} and \ref{fig_logGutzwiller} the graphs of the  $\log|\widehat{\sigma}|$ function for every three surfaces. The left four graphs represent the restriction of the function to the Fenchel hexagons (with colored edges periodicity). The right plots represent the full graph of the associated function on Poincare disk. 

As an example, we also provide plots of the real and imaginary part of $\widehat{\wp}_1$ of the Bolza surface in the appendix (see figures \ref{fig_BolzaOrder1R} and \ref{fig_BolzaOrder1I}).

\begin{figure}[!htb]
    \centering
    \begin{tabular}[t]{ccc}
        \begin{tabular}{c}
            \hfill
            \begin{subfigure}[t]{0.22\textwidth}
                \centering
                \includegraphics[width=\linewidth]{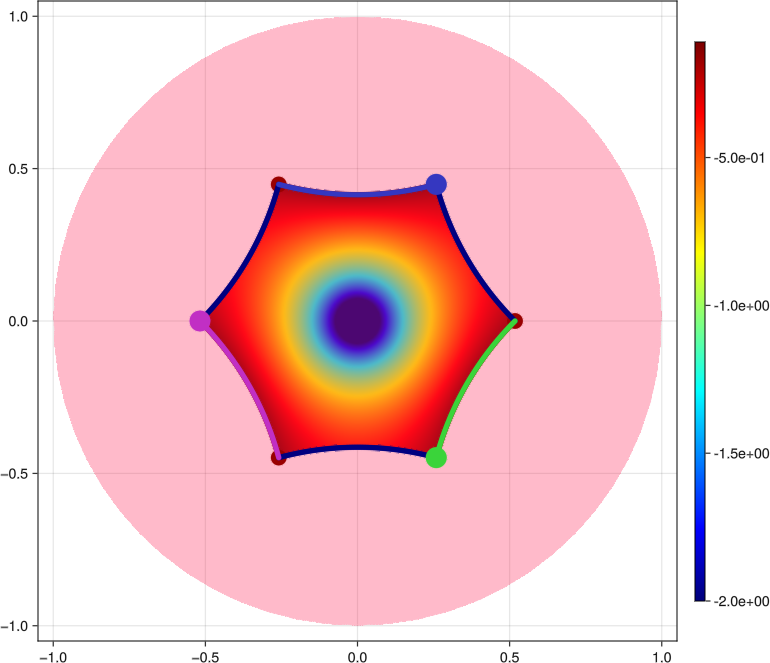}
            \end{subfigure}\\
            \hfill
            \begin{subfigure}[t]{0.22\textwidth}
                \centering
                \includegraphics[width=\linewidth]{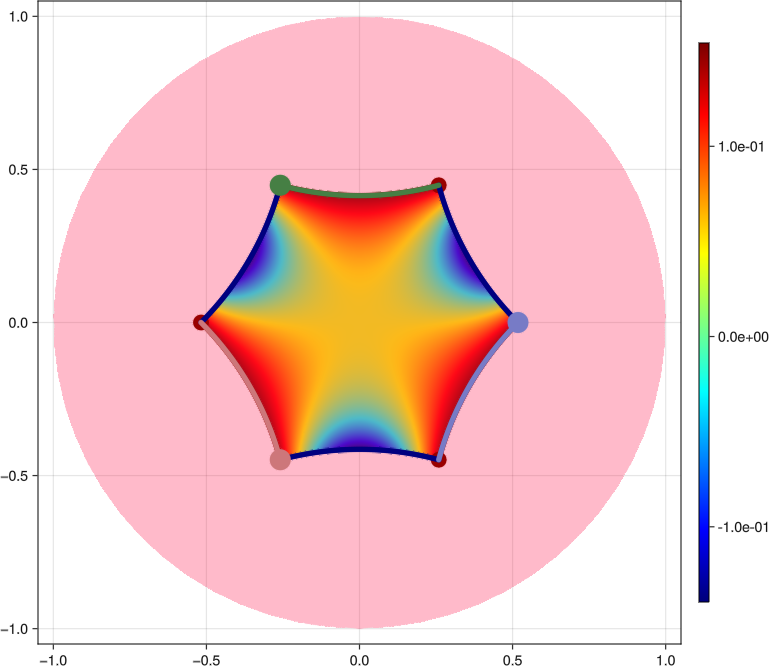}
            \end{subfigure}
        \end{tabular}
        &
        \begin{tabular}{c}
            \hspace{-1cm}
            \begin{subfigure}[t]{0.22\textwidth}
                \centering
                \includegraphics[width=\linewidth]{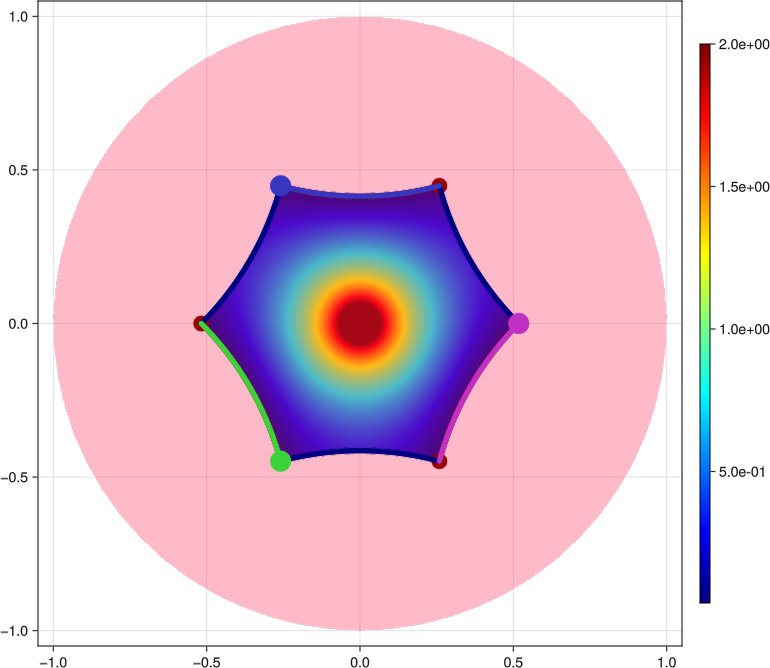}
            \end{subfigure}\\
            \hspace{-1cm}
            \begin{subfigure}[t]{0.22\textwidth}
                \centering
                \includegraphics[width=\linewidth]{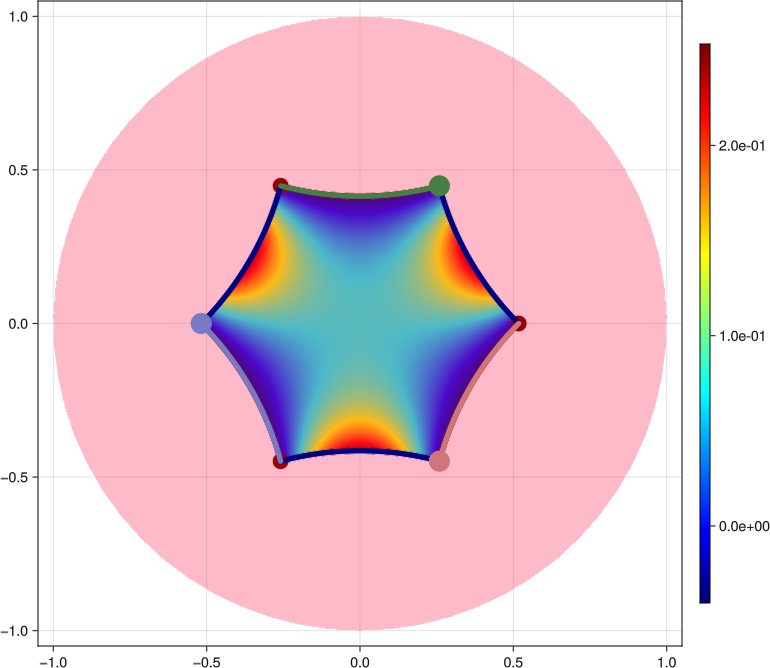}
            \end{subfigure}
        \end{tabular}
        &
        \begin{tabular}{c}
            \hspace{-0.75cm}
            \begin{subfigure}[t]{0.42\textwidth}
                \centering
                \includegraphics[width=\linewidth]{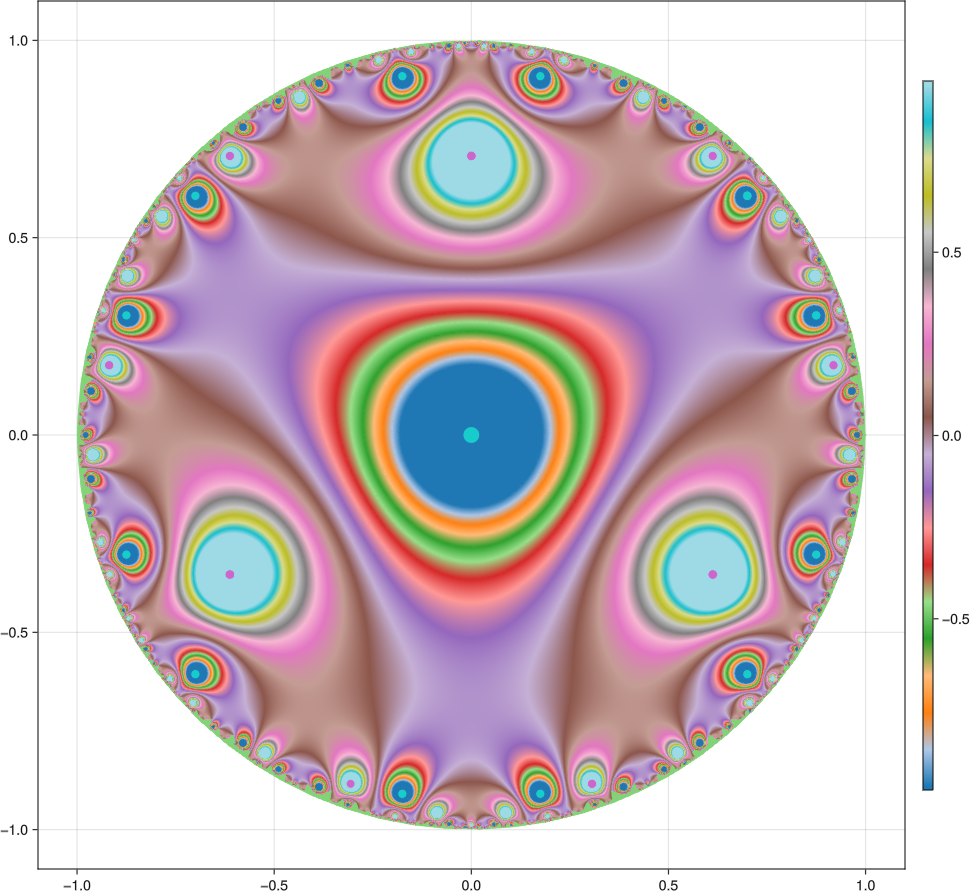}
            \end{subfigure}
        \end{tabular}
    \end{tabular}
    \caption{Logarithmic poles on  $D_6 \times \mathbb{Z}_2$'s surface.}
    \label{fig_logD6}
\end{figure}

\begin{figure}[!htb]
    \centering
    \begin{tabular}[t]{ccc}
        \begin{tabular}{c}
            \hfill
            \begin{subfigure}[t]{0.22\textwidth}
                \centering
                \includegraphics[width=\linewidth]{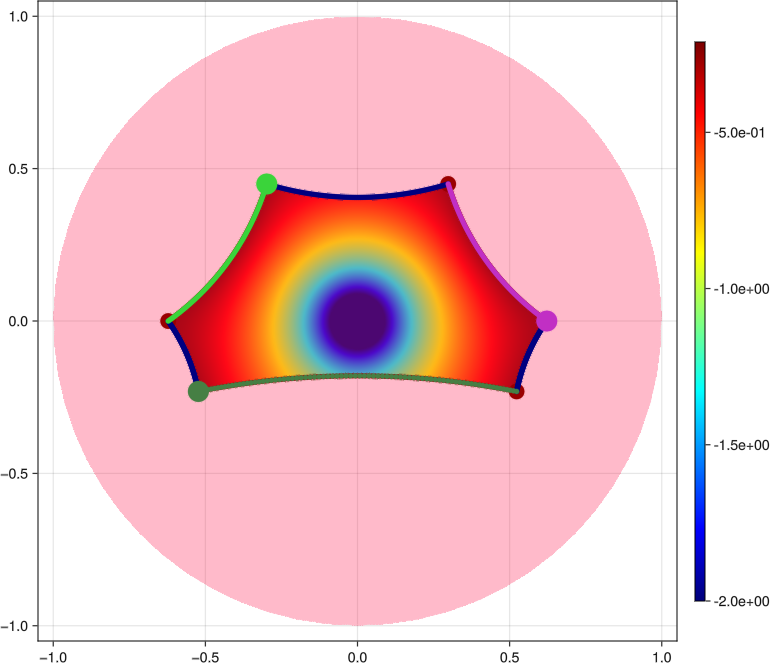}
            \end{subfigure}\\
            \hfill
            \begin{subfigure}[t]{0.22\textwidth}
                \centering
                \includegraphics[width=\linewidth]{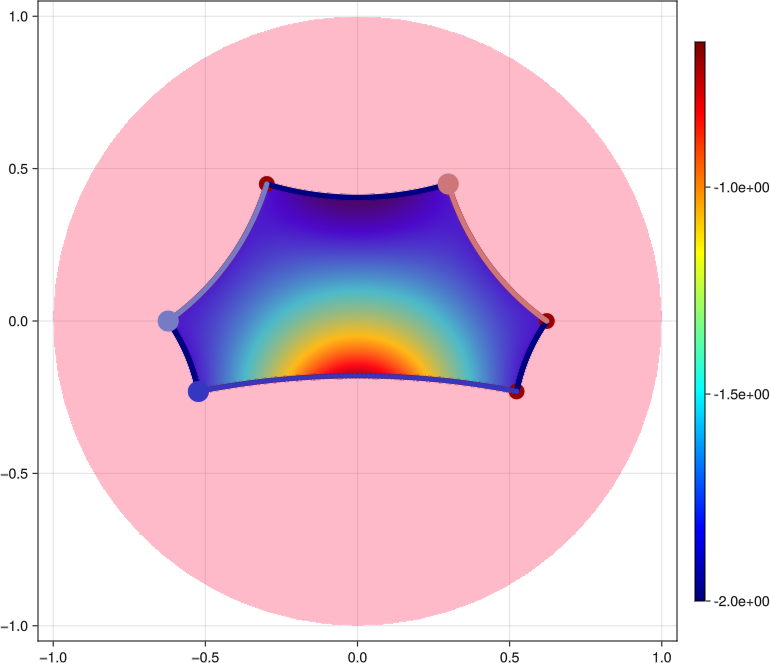}
            \end{subfigure}
        \end{tabular}
        &
        \begin{tabular}{c}
            \hspace{-1cm}
            \begin{subfigure}[t]{0.22\textwidth}
                \centering
                \includegraphics[width=\linewidth]{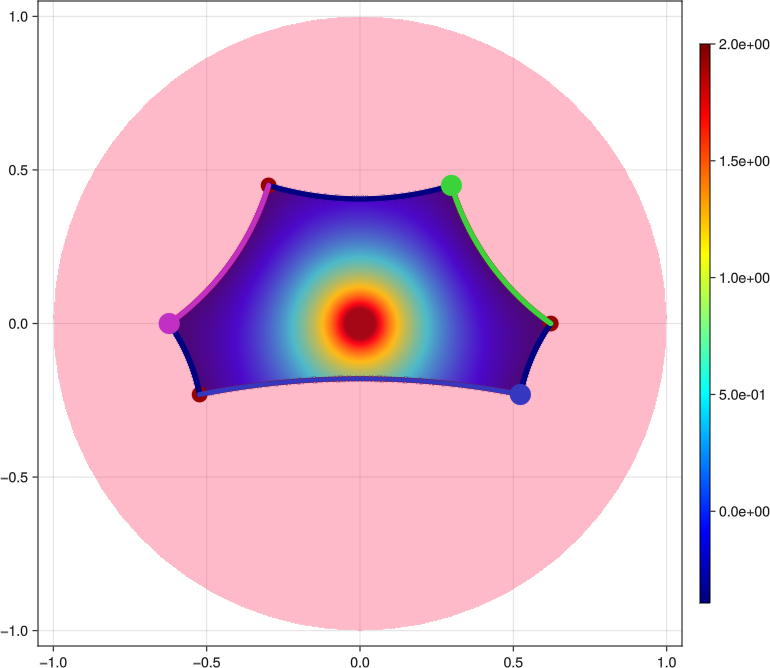}
            \end{subfigure}\\
            \hspace{-1cm}
            \begin{subfigure}[t]{0.22\textwidth}
                \centering
                \includegraphics[width=\linewidth]{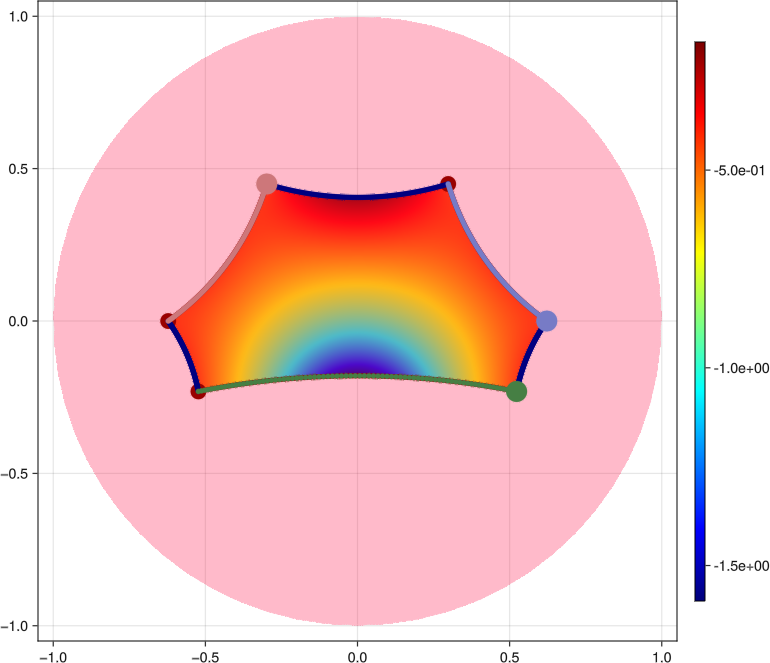}
            \end{subfigure}
        \end{tabular}
        &
        \begin{tabular}{c}
            \hspace{-0.75cm}
            \begin{subfigure}[t]{0.42\textwidth}
                \centering
                \includegraphics[width=\linewidth]{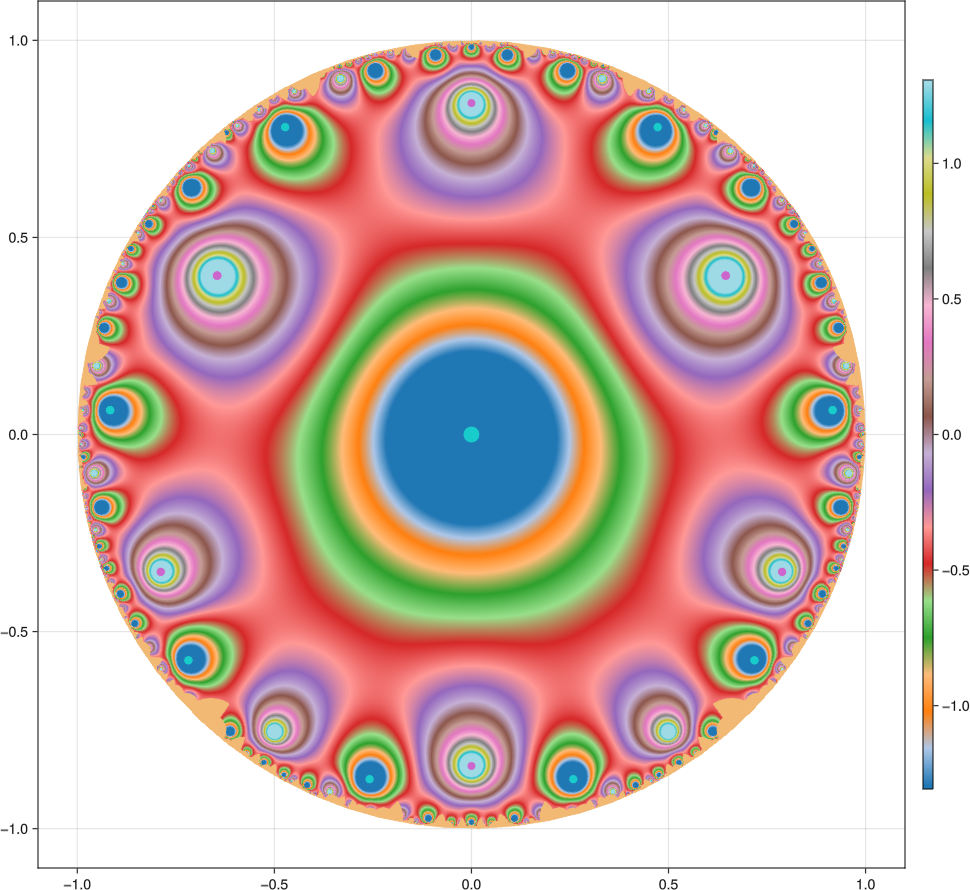}
            \end{subfigure}
        \end{tabular}
    \end{tabular}
    \caption{Logarithmic poles on Bolza's surface.}
    \label{fig_logBolza}
\end{figure}

\begin{figure}[!htb]
    \centering
    \begin{tabular}[t]{ccc}
        \begin{tabular}{c}
            \hfill
            \begin{subfigure}[t]{0.22\textwidth}
                \centering
                \includegraphics[width=\linewidth]{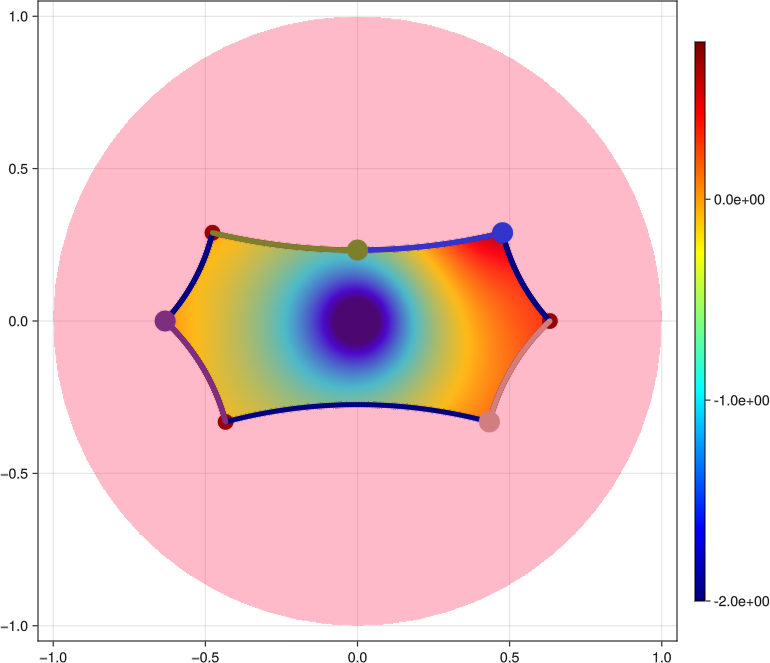}
            \end{subfigure}\\
            \hfill
            \begin{subfigure}[t]{0.22\textwidth}
                \centering
                \includegraphics[width=\linewidth]{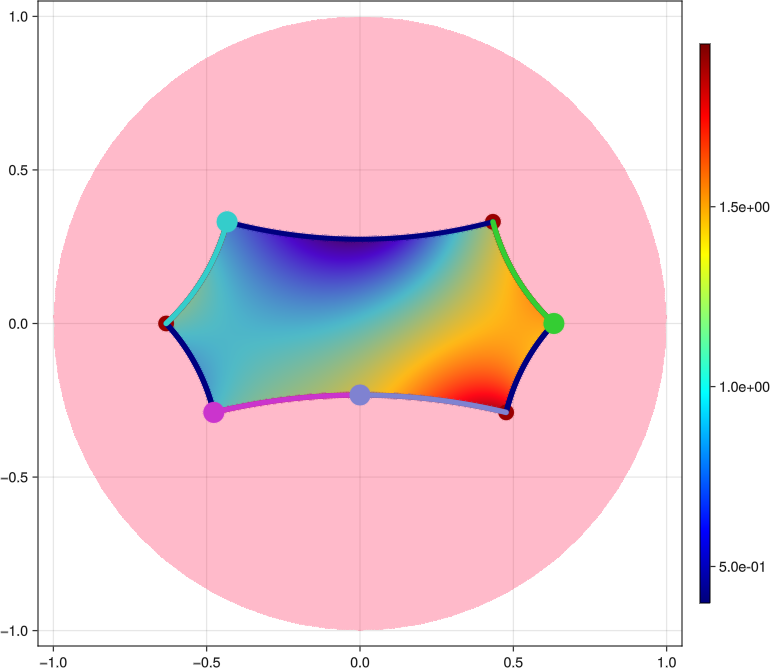}
            \end{subfigure}
        \end{tabular}
        &
        \begin{tabular}{c}
            \hspace{-1cm}
            \begin{subfigure}[t]{0.22\textwidth}
                \centering
                \includegraphics[width=\linewidth]{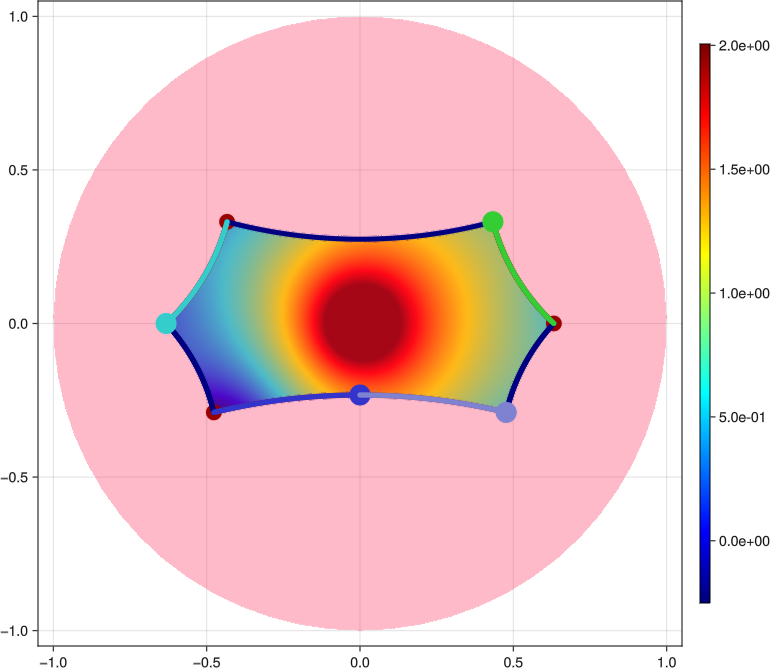}
            \end{subfigure}\\
            \hspace{-1cm}
            \begin{subfigure}[t]{0.22\textwidth}
                \centering
                \includegraphics[width=\linewidth]{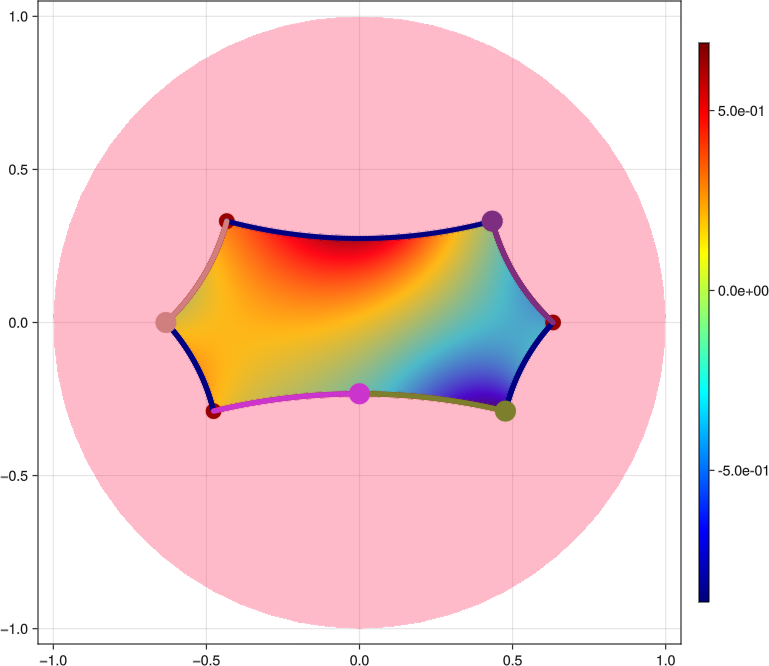}
            \end{subfigure}
        \end{tabular}
        &
        \begin{tabular}{c}
            \hspace{-0.75cm}
            \begin{subfigure}[t]{0.42\textwidth}
                \centering
                \includegraphics[width=\linewidth]{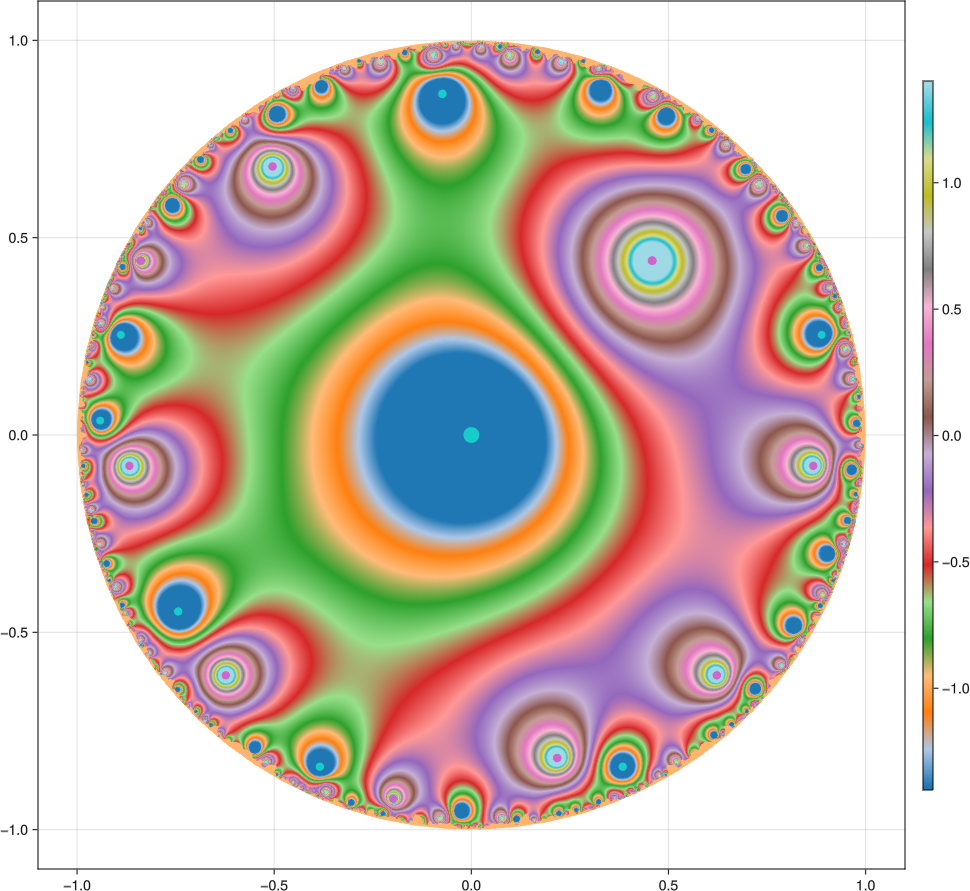}
            \end{subfigure}
        \end{tabular}
    \end{tabular}
    \caption{Logarithmic poles on Gutzwiller's surface.}
    \label{fig_logGutzwiller}
\end{figure}

\subsection{Approximation of meromorphic function of higher order by complex differentiation}

The definitions of harmonic functions of order $2$ to $2g+1$ introduced in sections \ref{subsec_ptilde} and \ref{subsec_pbar} involve a new difficulty from a computational point of view: The derivatives of a rational function of $\Theta$ (composed with anti-derivatives of pre-computed  $1$-forms) have to be evaluated.
For this task, we implemented a finite difference procedure well adapted to analytic functions. In this context, finite differences avoid round off errors coming from high order algebraic derivatives  minimizing the number of function evaluations. We recall a simple process to approximate the derivative of order $m \in \mathbb{N}^*$ of  a function $f$ of a single complex variable $z$. The generalization of the method to complex vector variable is straightforward.

Let $\varepsilon >0$ and $\zeta = e^{\frac{2 i \pi}m}$. Assume $f(z +h) = \sum_{k \in \mathbb{N}} a_k h^k$ for a small complex perturbation $h \in \mathbb{C}$. For any fixed $1 \leq p < m$, we have using algebraic simplifications
$$\sum_{n = 0}^{m-1} \zeta^{-pn}f(z +h_n) = m \sum_{k \in \mathbb{N}, \, k = p (\text{mod}\, m)} a_k \varepsilon^k$$ 
where $h_n = \varepsilon \zeta^n$. Thus, previous equality gives an approximation of $a_p$ with an error term of order $\varepsilon^m$. Consequently, $m$ evaluations of the function $f$ lead to an approximation of all its derivatives (deduced from the $a_k$) up to order $m-1$. 
In order to avoid round off errors in the finite difference process, we systematically call the  $\emph{Flint}$ library (to evaluate Riemann $\Theta$ function) with a (doubled) accuracy of $512$ bits.

Finally,  meromorphic functions of order greater than $2g+2$ can be easily obtained as powers and product of previous functions as detailed in section \ref{subsec_ptilde}. Similarly to the approximation of the $\log|\widehat{\sigma}_{v,w}|$ and $\widehat{\wp}_1(z,w)$ function of previous section, we observe no loss of accuracy with respect to the periodicity. We obtained once again the same order of convergence as the one obtained in the approximation of  $1$-forms. As an illustration, we plot in the appendix the real and imaginary part of a meromorphic function of the surface with symmetry group $D_6 \times \mathbb{Z}_2$ of order $3$ (see figures \ref{fig_Weierstrass3D6R} and \ref{fig_Weierstrass3D6I}) and  of order $6$ (see figures \ref{fig_Weierstrass6D6R} and \ref{fig_Weierstrass6D6I}).

\subsection{Algorithmic complexity and higher genus} The previous steps describe how to construct a complete basis for approximating harmonic functions on surfaces of arbitrary genus. As emphasized in the introduction, the most computationally demanding part of the method is the least squares approximation of $g$ independent $1$-forms: For a fixed genus $g$, we need to solve $g$ least square problems which enforce the $2g-2$ periodicity conditions. In contrast, the computation of the period matrix and the associated Riemann constant relies solely on fast algebraic evaluations. Once computed, these 1-forms serve to parametrize basis of harmonic functions associated with any finite number of holes on the surface.

\section{Numerical harmonic extension in genus $2$}\label{sec_exp}

In the same spirit as the numerical experiments of \cite{T18, KOO23} we illustrate in this section the use of our basis of harmonic functions to approximate harmonic extension by the method of particular solutions. Namely, we consider the Gutzwiller surface $X$  to solve for a given $g$ the Laplace equation
\begin{equation} \label{eq_harmext}
    u: X\setminus (B_1\cup B_2)\to\R\text{ such that }\begin{cases}\Delta u=0&\text{ in } X \setminus ( B_1 \cup B_2) \\ u=g & \text{ in }\partial B_1 \cup \partial B_2
    \end{cases},
\end{equation}
where $B_1$ and $B_2$ are two closed geodesic balls.

Theorem \ref{mr_finitepoles} is the key ingredient of the method of particular solution to solve \eqref{eq_harmext} for a surface of genus greater than 1. For a finite set of harmonic functions, the idea is to identify the linear combination which minimizes the boundary error in the least square sense. To simplify the notations, we denote by $(\phi_i)_{1\leq u \leq M}$ a finite set of harmonic functions in  $X \setminus ( B_1 \cup B_2)$ provided by theorem \ref{mr_finitepoles} which is dense in the space of harmonic functions when $M$ tends to infinity.
We thus look for the unknown coefficients $(v_i)_{1\leq u \leq M}$ such that the function 
\begin{equation}
    \label{e:truncatedSeriesSolution}
    u(z) = \sum_{i = 1}^M v_i \phi_i(z)
\end{equation}
is close to satisfy the boundary condition imposed by $g$.

To identify the optimal coefficients, we sample uniformly the boundary $\partial B_1 \cup \partial B_2$  with respect to arclength and denote the collection of all sampled points by $(p_\ell)_{ \ell=1,\hdots, S}$. We define the matrix $B \in \mathbb R^{S\times M}$ to be
\begin{align*}
    B_{\ell,i} &= \phi_i(p_l). 
\end{align*}
The least-squares solution is  found by solving the normal equations \begin{equation}
    \label{e:normalEq}
    B^t B v = B^t b
\end{equation}
where the vector $b = (g(p_\ell))_{ \ell \in S}$ is the evaluation of the boundary condition at the sampling points.

We implemented the numerical method in Julia using arbitrary precision provided by the packages \emph{GenericLinearAlgebra.jl} and \emph{ArbNumerics.jl} (a wrapper of the \emph{Arb} C library we already introduced). All computational experiments were performed with a precision of $512$ bits which corresponds to a machine epsilon approximately equal to $10^{-154}$. 

In our example, we choose the two disks $B_1$ and $B_2$ to be centered circular holes of radii $r=0.1$ of the first and third hexagons representing Gutzwiller surface. We systematically fixed the number of sampling points per circular edge to be equal to $3M$. We define the boundary condition as $g(\theta) = \sin(3\theta)$ and  $g(\theta) = \sin(7\theta)$ respectively on $\partial B_1$ and $\partial B_2$, where $\theta$ are the polar angles with respect to the centers of the holes. By the maximum principle, the accuracy of the solution can be computed by looking at the error on the boundary that is $\sup_{x\in \partial B_1 \cup \partial B_2} |u(x) - f(x)|$.

We plot in figure \ref{fig_spec} the graph of the (base 10) logarithm of the boundary  error a posteriori evaluated on $3S$ random points with respect to the number $M$ of basis elements. As expected we recover a spectral convergence as the results obtained on similar test cases in \cite{T18, KOO23}. Finally, we plot in figure \ref{fig_harmonic} the full graph of the harmonic extension of $g$ on Poincare disk. To visualize the small variations of the function $u$, we apply a threshold to $u$ and plot the function $\max(\min(u,t), -t)$ where $t = 0.1$.


\begin{figure}[!htb]
    \centering
    \begin{tabular}[t]{ccc}
        \begin{tabular}{c}
            \hfill
            \begin{subfigure}[t]{0.22\textwidth}
                \centering
\includegraphics[width=\linewidth]{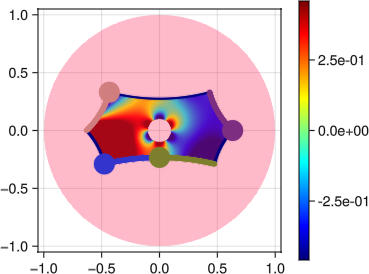}
            \end{subfigure}\\
            \hfill
            \begin{subfigure}[t]{0.22\textwidth}
                \centering
\includegraphics[width=\linewidth]{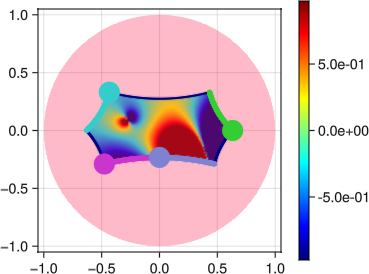}

            \end{subfigure}
        \end{tabular}
        &
        \begin{tabular}{c}
            \hspace{-1cm}
            \begin{subfigure}[t]{0.22\textwidth}
                \centering
\includegraphics[width=\linewidth]{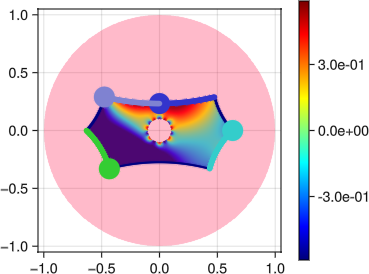}

            \end{subfigure}\\
            \hspace{-1cm}
            \begin{subfigure}[t]{0.22\textwidth}
                \centering
\includegraphics[width=\linewidth]{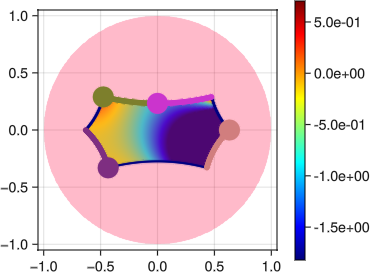}
            \end{subfigure}
        \end{tabular}
        &
        \begin{tabular}{c}
            \hspace{-0.75cm}
            \begin{subfigure}[t]{0.42\textwidth}
                \centering
\includegraphics[width=\linewidth]{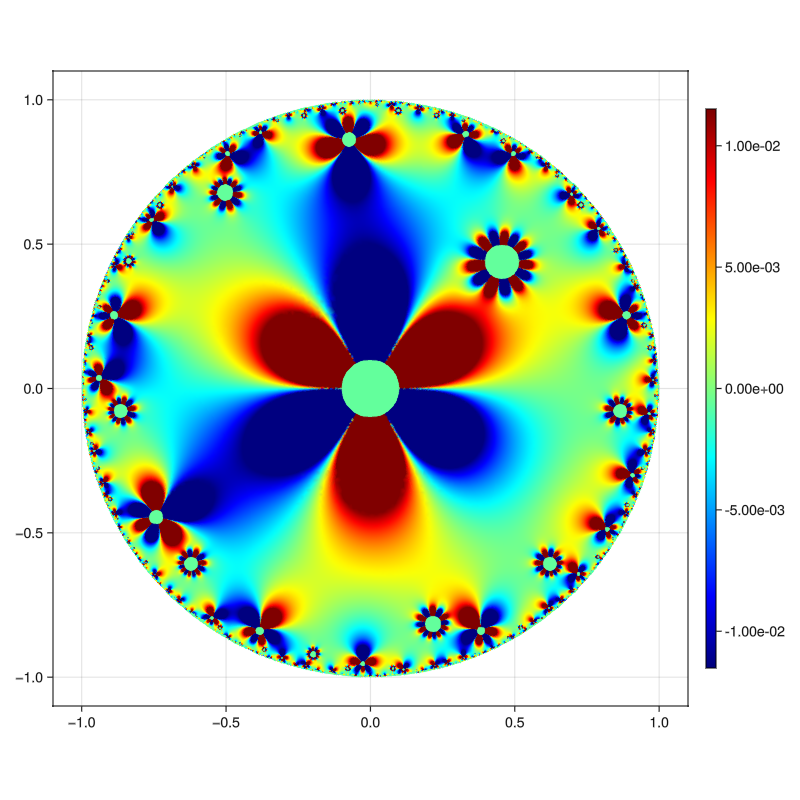}
            \end{subfigure}
        \end{tabular}
    \end{tabular}
    \caption{Harmonic extension on Gutzwiller surface by the method of particular solution.}
    \label{fig_harmonic}
\end{figure}

\begin{figure}[!htb]
    \centering
    \includegraphics[width=0.5\textwidth]{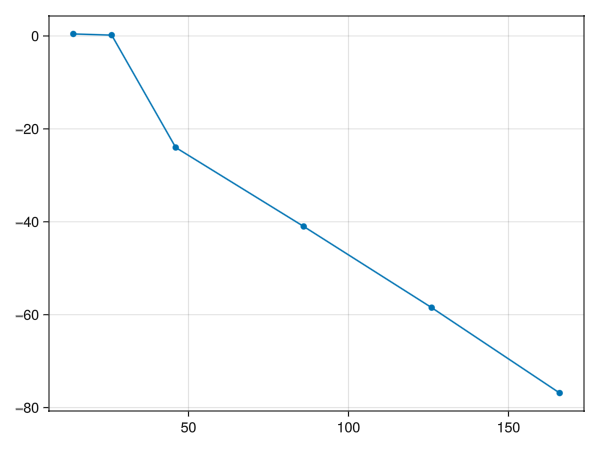}
    \caption{Spectral convergence with respect to the number of degrees of freedom: plot of the logarithm of the error  in the boundary condition of \eqref{eq_harmext} with respect to the number of degrees of freedom.} 
    \label{fig_spec}
\end{figure}

\section{Convergence of the least square approximation}\label{sec_leastsquareconv}

In this section we prove the theorem \ref{the_conv}. We fix $X$ as in the statement of the result (in particular $0\in H_1$ is not a Weierstrass point), and we divide the proof in several intermediate results. We will use the following abuse of notation: the edge $\gamma_{p,i}$ is seen as a parametrizing function
\[\gamma_{p,i}:[0,1]\to \D\]
with constant speed (equal to the length of $\gamma_{p,i}$ denoted $L(\gamma_{p,i})$). This way the sampling set is simply 
\[\Sa_{p,i}=\{\gamma_{p,i}(k/aN),\ k=0,1,2,\hdots,aN\}.\]
We start by introducing the notation\[\gamma_{p,i}^a=\gamma_{p,i}([1/a,1-1/a]).\]
For any $K\subset \C$, we write $\mathcal{N}^\eps(K)=\{z\in\C:\text{dist}(z,K)\leq \eps\}$. We introduce the following auxiliary functionals; for any $P=(P_{|1},\hdots,P_{|m})\in\C[z]^m$, we let:
\[F(P)=\sum_{(p,i)\to (q,j)}\Vert P_{|p}-g_{p,i}'P_{|q}\circ g_{p,i}\Vert_{L^\infty(\gamma_{p,i}^a)}^2\]
and
\[G(P)=\sum_{(p,i)\to (q,j)}\Vert P_{|p}-g_{p,i}'P_{|q}\circ g_{p,i}\Vert_{L^\infty\left(\mathcal{N}^{\frac{2L(\gamma_{p,i})}{a}}(\gamma_{p,i}^a)\right)}^2.\]
Note that $\mathcal{N}^{\frac{2L(\gamma_{p,i})}{a}}(\gamma_{p,i}^a)$ contains $\mathcal{N}^{\frac{L(\gamma_{p,i})}{a}}(\gamma_{p,i}([0,1]))$.

\begin{lemma}\label{lem_est_comparaison}
Let $\rho$ be as in theorem \ref{the_conv}, then for any large enough $N$ and any $\om\in\Om^1(X)$:
\[\inf_{\A^{N,\om}}\sqrt{E^N}< \rho^{N}\sum_{j=0}^{g-1}|\om_{|1}^{(j)}(0)|.\]
\end{lemma}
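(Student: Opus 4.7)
\bigskip

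\noindent\textbf{Proof plan.} The strategy is to construct an explicit competitor $P\in\A^{N,\om}$ by approximating each local component $\om_{|p}$ on the polygon $H_p$ by a polynomial of degree $N$, then exploit the exact periodicity relations satisfied by $\om$ to bound $E^N(P)$.

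First, pick $s$ with $-\log\rho<s<\min_p\inf_{z\in\partial\D_1}\mathcal{G}_{H_p,\infty}(z)$, which is possible by the hypothesis on $\rho$. For each $p$ the sublevel set $K_{p,s}:=\{\mathcal{G}_{H_p,\infty}\leq s\}$ is a compact subset of $\D_1$, and since $\om_{|p}$ is (as an element of $\Om^1(\D/\Gamma)$ via the bijection recalled in section \ref{sec_abel}) the restriction of a function holomorphic on all of $\D_1$, it is in particular holomorphic on an open neighbourhood of $K_{p,s}$. The Bernstein--Walsh approximation theorem in the plane then yields a polynomial $\widetilde P_{|p}\in\C_N[z]$ with
\[\|\widetilde P_{|p}-\om_{|p}\|_{L^\infty(H_p)}\leq C_1 e^{-sN}\,\|\om_{|p}\|_{L^\infty(K_{p,s})}.\]
Because $0\in H_1$ is not a Weierstrass point, lemma \ref{lemma_wei} tells us that $\om\mapsto(\om_{|1}(0),\ldots,\om_{|1}^{(g-1)}(0))$ is a linear bijection from the finite-dimensional space $\Om^1(X)$ onto $\C^g$, so continuity of evaluation on each compact $K_{p,s}$ gives
\[\sum_{p=1}^m\|\om_{|p}\|_{L^\infty(K_{p,s})}\leq C_2\sum_{j=0}^{g-1}|\om_{|1}^{(j)}(0)|.\]

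Next I would enforce the derivative constraints at $0$. Replace $\widetilde P_{|1}$ by $P_{|1}=\widetilde P_{|1}+R$, where $R\in\C_{g-1}[z]$ is the unique polynomial of degree $<g$ making $P_{|1}^{(j)}(0)=\om_{|1}^{(j)}(0)$ for all $j=0,\ldots,g-1$. Cauchy estimates on a small fixed disk centered at $0$ contained in $H_1$ give $|R^{(j)}(0)|\leq C_3 e^{-sN}\sum_{j'}|\om_{|1}^{(j')}(0)|$, hence $\|R\|_{L^\infty(H_1)}$ satisfies the same bound and
\[\|P_{|1}-\om_{|1}\|_{L^\infty(H_1)}\leq C_4 e^{-sN}\sum_{j=0}^{g-1}|\om_{|1}^{(j)}(0)|.\]
Setting $P_{|p}=\widetilde P_{|p}$ for $p\geq 2$, the tuple $P=(P_{|1},\ldots,P_{|m})$ lies in $\A^{N,\om}$.

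Finally, since $\om$ satisfies $\om_{|p}(z)=g_{p,i}'(z)\om_{|q}(g_{p,i}(z))$ on each glued edge $\gamma_{p,i}$, for any $z\in\gamma_{p,i}$ one may write
\[P_{|p}(z)-g_{p,i}'(z)P_{|q}(g_{p,i}(z))=\bigl(P_{|p}-\om_{|p}\bigr)(z)-g_{p,i}'(z)\bigl(P_{|q}-\om_{|q}\bigr)(g_{p,i}(z)).\]
Both terms are controlled by the previous $L^\infty$ bounds, and $|g_{p,i}'|$ is uniformly bounded on $\gamma_{p,i}$, so each summand in the definition of $E^N(P)$ is at most $C_5 e^{-2sN}(\sum_j|\om_{|1}^{(j)}(0)|)^2$. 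Averaging over the $S(N)=aN$ sample points and summing over the finitely many gluings yields
\[\sqrt{E^N(P)}\leq C_6\,e^{-sN}\sum_{j=0}^{g-1}|\om_{|1}^{(j)}(0)|.\]
Since $e^{-s}<\rho$ strictly, for $N$ large enough $C_6 e^{-sN}<\rho^N$, which gives the desired strict inequality.

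The only delicate point is the derivative correction step: one must check that the Taylor coefficients of $\widetilde P_{|1}$ at $0$ are exponentially close to those of $\om_{|1}$, which follows from Cauchy's formula applied to $\widetilde P_{|1}-\om_{|1}$ on a fixed small circle in $H_1$. All other ingredients are direct applications of classical planar Bernstein--Walsh together with the finite-dimensionality input from lemma \ref{lemma_wei}.
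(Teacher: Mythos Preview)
Your proof is correct and follows essentially the same route as the paper's: apply the planar Bernstein--Walsh theorem on each polygon $H_p$ (using that $\om_{|p}$ extends holomorphically to all of $\D$), then invoke the finite-dimensionality of $\Om^1(X)$ together with lemma \ref{lemma_wei} to convert all constants into the derivative norm $\sum_{j=0}^{g-1}|\om_{|1}^{(j)}(0)|$. The paper compresses the passage from ``polynomial approximant of $\om_{|p}$'' to ``element of $\A^{N,\om}$'' into the single remark that $E^N$ is a quadratic form (so $\om\mapsto\inf_{\A^{N,\om}}\sqrt{E^N}$ is a seminorm on $\Om^1(X)$, and it suffices to check the bound on a basis); your explicit low-degree correction polynomial $R$ is one concrete way to carry this out, and arguably clearer.

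One small imprecision: the quantitative Bernstein--Walsh bound you invoke, with $\|\om_{|p}\|_{L^\infty(K_{p,s})}$ on the right-hand side, typically requires the sup-norm on a slightly \emph{larger} sublevel set $K_{p,s'}$ with $s'>s$ (and then the rate is $e^{-sN}$). Since you have strict room between $-\log\rho$ and $\min_p\inf_{\partial\D_1}\mathcal{G}_{H_p,\infty}$, you can simply pick $s<s'$ both in that interval, and the rest of the argument is unchanged.
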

\begin{proof}
This is a consequence of the Bernstein-Walsch theorem (see \cite[Ch. VII]{W35} or \cite[Th 6.3.1]{R95} for a more recent reference): since $\om_{|p}$ extends as a holomorphic function on $\D$ (for any $p$) and no further, then
\[\limsup_{N\to\infty}\inf_{Q\in\C^N[z]}\Vert Q-\om_{|p}\Vert_{L^\infty(H_p)}^{1/N}=r,\]
where $r\in ]0,1[$ is the smallest value such that $\om_{|p}$ extends holomorphically to $H_p\sqcup \{\mathcal{G}_{ H_p,\infty}<\log(1/r)\}$, meaning the smallest value such that $\{\mathcal{G}_{ H_p,\infty}<\log(1/r)\}$ is included in the disk $\D$. By definition of $\rho$ we have, for any large enough $N$ and any $p$:
\[\inf_{Q\in\C^N[z]}\Vert Q-\om_{|p}\Vert_{L^\infty(H_p)}< \rho^N.\]
The result is then obtained from three facts: $\Om^{1}(X)$ has finite dimension, $E^N$ is a quadratic form and $\om\in\Om^1(X)\mapsto \sum_{j=0}^{g-1}|\om_{|1}^{(j)}(0)|$ is a norm (since the origin of $H_1$ is supposed to be generic).
\end{proof}

\begin{lemma}\label{lem_est_sample}
There exists a geometric constant $C>0$ such that for any $P=(P_{|1},\hdots,P_{|m})\in\C_N[z]^m$ we have

\[F(P)\lesssim C aN^4e^{NC/a} E^N(P).\]

\end{lemma}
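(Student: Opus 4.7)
Both $F(P)$ and $E^N(P)$ decompose as sums over matched edge pairs $(p,i)\to(q,j)$, so it suffices to argue per edge. Fixing such a pair, set
\[R(t):=P_{|p}(\gamma_{p,i}(t))-g_{p,i}'(\gamma_{p,i}(t))\,P_{|q}(g_{p,i}(\gamma_{p,i}(t))),\qquad t\in[0,1].\]
The samples in $\Sa_{p,i}$ are the values $R(t_k)$ at $t_k=k/(aN)$, $k=0,\hdots,aN$, and the per-edge reduction of the claimed inequality reads
\[\|R\|_{L^\infty([1/a,1-1/a])}^2\;\lesssim\;N^{3}\,e^{CN/a}\sum_{k=0}^{aN}|R(t_k)|^2.\]

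The first ingredient is an analytic extension of $R$. Since the arc parametrization $\gamma_{p,i}$ is entire and the M\"obius map $g_{p,i}$ has a single pole outside $\ov{\D}$, $R$ extends as a meromorphic function on $\C$ whose poles lie at a fixed finite set at positive distance $d_*>0$ from $[0,1]$, each of order at most $N+C_0$. Multiplying by a fixed polynomial $\Pi$ vanishing at these poles raised to the appropriate power turns $\Pi^{N+C_0}R$ into a polynomial of degree $\mathcal{O}(N)$, so the classical Bernstein-Walsh inequality applied on an elliptic neighbourhood $V$ of $[0,1]$ (with foci $0,1$ avoiding the poles) yields
\[\|R\|_{L^\infty(V)}\;\leq\;K^{N}\,\|R\|_{L^\infty([0,1])}\]
for a geometric $K>1$.

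Next, let $L$ denote the Lagrange interpolant of $R$ at the $aN+1$ equispaced nodes, with node polynomial $\omega(t)=\prod_k(t-t_k)$. Hermite's contour formula gives
\[|R(t)-L(t)|\;\leq\;\frac{|\omega(t)|}{2\pi\,\mathrm{dist}(t,\partial V)}\oint_{\partial V}\frac{|R(\zeta)|}{|\omega(\zeta)|}\,|d\zeta|\qquad(t\in[1/a,1-1/a]).\]
Using the standard bounds $\|\omega\|_{L^\infty([0,1])}\lesssim e^{-aN}/\sqrt{aN}$ and $|\omega(\zeta)|\gtrsim \rho^{aN}$ on $\partial V$ (with $\rho$ related to the minor semi-axis of $V$), combined with the Bernstein-Walsh growth bound, one obtains
\[\|R-L\|_{L^\infty([1/a,1-1/a])}\;\leq\;\left(\frac{K}{(e\rho)^{a}}\right)^{N}\|R\|_{L^\infty([0,1])},\]
which for $a$ sufficiently large (depending only on the geometry) is bounded by $e^{-\alpha N}\|R\|_{L^\infty([0,1])}$ for some $\alpha>0$. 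In parallel, $\|L\|_{L^\infty([1/a,1-1/a])}\leq \Lambda_{a,N}\max_k|R(t_k)|$, where the restricted Lebesgue constant of equispaced interpolation on $[0,1]$, evaluated on the interior portion at distance $1/a$ from the endpoints, satisfies $\Lambda_{a,N}\lesssim (aN)^{O(1)}e^{CN/a}$ (this factor encodes the residual Runge-type growth coming from proximity to the endpoints).

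The closing step resolves the residual $\|R\|_{L^\infty([0,1])}$ in the interpolation-error bound. Covering $[0,1]$ by a bounded number of shifted subintervals, each of which can be treated as the ``interior portion'' of an affine reparametrization of the sampling grid, I apply the per-interval bound to each and sum; the exponentially small $e^{-\alpha N}$ factor then allows $\|R\|_{L^\infty([0,1])}$ to be absorbed on the left. Putting everything together and using $\max_k|R(t_k)|\leq(\sum_k|R(t_k)|^2)^{1/2}$ yields the per-edge estimate, which upon summation over edges gives the lemma. The main technical obstacle is precisely this closing/covering argument together with the sharp bound on the restricted Lebesgue constant; the remainder of the proof is standard polynomial-approximation bookkeeping.
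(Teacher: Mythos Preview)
Your approach has a genuine gap at an early step. You claim that after clearing the M\"obius pole, $\Pi^{N+C_0}R(t)$ becomes a polynomial in $t$ of degree $\mathcal{O}(N)$. This is false: the constant-speed parametrization of a circular arc is $\gamma_{p,i}(t)=c+re^{i(\alpha+\beta t)}$, entire but transcendental. Composing a degree-$N$ polynomial with $\gamma_{p,i}$ produces a trigonometric expression in $e^{i\beta t}$, not an algebraic polynomial in $t$, and multiplying by polynomial factors in $t$ cannot repair this. Consequently the invocation of Bernstein--Walsh on an ellipse around $[0,1]$ is unjustified, and the contour-integral error estimate, the Lebesgue-constant bound, and the closing argument all rest on an object that is not what you assert it to be. (You also leave the restricted Lebesgue-constant bound and the covering step as acknowledged black boxes; even granting the polynomial claim, these would need substantial work.)

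The paper sidesteps all of this by making two different choices. First, it works in the $z$-variable on the arc rather than in the parameter $t$: there the error function
\[f(z)=P_{|p}(z)-g_{p,i}'(z)P_{|q}(g_{p,i}(z))\]
is genuinely rational, with $(z-\omega)^{N+2}f(z)\in\C_{2N+2}[z]$. Second, instead of global interpolation at all $aN+1$ samples, for each target point $z\in\gamma_{p,i}^a$ it picks a \emph{window} of only $2N+3$ consecutive sample points $z_k$ centred near $z$. Since $2N+3$ values determine a degree-$(2N+2)$ polynomial exactly, the Lagrange representation
\[f(z)=\sum_{k=0}^{2N+2}\Bigl(\tfrac{z_k-\omega}{z-\omega}\Bigr)^{N+2}f(z_k)\prod_{l\neq k}\tfrac{z-z_l}{z_k-z_l}\]
is an identity, not an approximation. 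One then bounds each Lagrange factor directly via the near-equispaced geometry (Stirling-type estimates on $\prod|k-l|$), followed by Cauchy--Schwarz. No contour integrals, no Runge phenomenon, no closing argument are needed.
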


\begin{proof}
Let $P=(P_{|1},\hdots,P_{|m})\in\C_N[z]^m$, let $(p,i)$ and $(q,j)$ be the index of two edges such that $(p,i)\to (q,j)$ and the gluing is made by some $g_{p,i}\in PSU(1,1)$ of the form
\[g_{p,i}(z)=\frac{az+b}{z-\om}\]
where $\om:=g_{p,i}^{-1}(\infty)\in\C\setminus \ov{\D}$. We define
\[f(z)=P_{|p}(z)-g_{p,i}'(z)P_{|q}(g_{p,i}(z))\in\frac{\C_{2N+2}[z]}{(z-\om)^{N+2}}.\]
Our goal is to give a bound of $\Vert f\Vert_{L^\infty(\gamma_{p,i}^a)}^2$ by $\frac{1}{aN}\sum_{k=0}^{aN-1}\left|f(\gamma_{p,i}(k/aN))\right|^2$ (since this implies the lemma). Let $z=\gamma_{p,i}(t)$ for some $t\in [\frac{1}{a},1-\frac{1}{a}]$. There exists some integer interval $\{s,s+1,\hdots,s+2N+2\}\subset \{0,1,2,\hdots,aN\}$ such that, denoting $t_k=\frac{s+k}{aN}$, we have
\[t_{N}\leq t\leq t_{N+1}.\]
Denote also $z_k=\gamma_{p,i}(t_k)$. For some constant $C>0$ that only depend on the geometry of $\gamma_{p,i}$, we have for any $s\neq t$ in $[t_0,t_{2N+2}]$:
\begin{equation}\label{eq_distorsionest}
e^{-C/a}\leq \frac{|\gamma_{p,i}(t)-\gamma_{p,i}(s)|}{L(\gamma_{p,i})|t-s|}\leq e^{C/a},
\end{equation}
where $L(\gamma_{p,i})$ is the length of $\gamma_{p,i}$. Since $(z-\om)^{N+2}f(z)$ is a polynomial of degree $2N+2$ at most, then by the classical Lagrange interpolation formula we have
\[f(z)=\sum_{k=0}^{2N+2}\left(\frac{z_k-\om}{z-\om}\right)^{N+2}f(z_k)\prod_{l=0,\ l\neq k}^{2N+2}\frac{z-z_l}{z_k-z_l},\]
and so by Cauchy-Schwarz inequality:
\begin{equation}\label{est_csf}
|f(z)|^2\leq \left(\frac{1}{aN}\sum_{k=0}^{aN}|f(\gamma_{p,i}(k/aN))|^2\right)\left(aN\sum_{k=0}^{2N+2}\left|\frac{z_k-\om}{z-\om}\right|^{2N+4}\prod_{l=0,\ l\neq k}^{2N+2}\left|\frac{z-z_l}{z_k-z_l}\right|^2\right).
\end{equation}
We now bound each term separately:
\begin{itemize}[label=\textbullet]
\item $\left|\frac{z_k-\om}{z-\om}\right|^{2N+4}\leq e^{NC/a}$ for some geometric constant $C>0$ (since $|z_k-\om|\leq e^{c/a}|z-\om|$ for some $c>0$).
\item $\prod_{l=0,\ l\neq k}^{2N+2}\left|z_k-z_l\right|\geq e^{-(2N+2)C/a}L(\gamma_{p,i})^{2N+2}(aN)^{-2N-2}N!(N+1)!$. Indeed, by the estimate \eqref{eq_distorsionest} we have $$|z_k-z_l|>e^{-C/a}L(\gamma_{p,i})\frac{|k-l|}{aN}.$$
The product $\prod_{l=0,\ l\neq k}^{2N+2}\left|k-l\right|=(k-1)!(2N+2-k)!$ is minimal at $k=N+1$ with value $N!(N+1)!$, so we get
\begin{align*}
\prod_{l=0,\ l\neq k}^{2N+2}\left|z_k-z_l\right|&\geq e^{-(2N+2)C/a}L(\gamma_{p,i})^{2N+2}(aN)^{-2N-2}\prod_{l=0,\ l\neq k}^{2N+2}|k-l|\\
&\geq e^{-(2N+2)C/a}L(\gamma_{p,i})^{2N+2}(aN)^{-2N-2}N!(N+1)!.
\end{align*}
\item $\prod_{l=0,\ l\neq k}^{2N+2}\left|z-z_l\right|\leq Ce^{(2N+2)C/a}L(\gamma_{p,i})^{2N+2}(aN)^{-2N-2}(N+1)!^2$. Indeed, let $r:=aNt-s\in [N,N+1]$ (by the definition of $s$ above), then using again estimate \eqref{eq_distorsionest} we get
\begin{align*}
\prod_{l=0,\ l\neq k}^{2N+2}\left|z-z_l\right|&\leq e^{(2N+2)C/a}L(\gamma_{p,i})^{2N+2}(aN)^{-2N-2}\prod_{l=0,l\neq k}^{2N+2}|r - k|\\
&\leq e^{(2N+2)C/a}L(\gamma_{p,i})^{2N+2}(aN)^{-2N-2}(N+1)!^2
\end{align*}
\end{itemize}
Applying these three estimates to the last factor of \eqref{est_csf} we obtain (for to a possibly different geometric constant $C>0$):
\begin{align*}
|f(z)|^2&\leq CaN^4e^{NC/a} \left(\frac{1}{aN}\sum_{k=0}^{aN}|f(\gamma_{p,i}(k/aN))|^2\right),
\end{align*}
which implies the result.
\end{proof}

\begin{lemma}\label{lem_est_bernstein}
There exists a geometric constant $\beta>0$ such that for any $P=(P_{|1},\hdots,P_{|m})\in\C_N[z]^m$, we have the inequality
\[G(P)\leq e^{N\beta/\sqrt{a}}F(P).\]
\end{lemma}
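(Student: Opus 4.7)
My plan is to establish the inequality edge-by-edge via a rational Bernstein--Walsch estimate adapted to the pole structure of
\[f := P_{|p} - g_{p,i}' \cdot P_{|q}\circ g_{p,i}.\]
Since $g_{p,i}(z) = (\alpha z + \beta)/(z-\omega)$ with $\omega := g_{p,i}^{-1}(\infty)\in \C\setminus\overline{\D}$, the function $f$ extends to a rational function on $\widehat{\C}$ whose only singularities are a pole of order at most $N+2$ at $\omega$ (from $g_{p,i}'$ and the denominator coming from $P_{|q}\circ g_{p,i}$) and a pole of order at most $N$ at $\infty$ (since $g_{p,i}' \cdot P_{|q}\circ g_{p,i}$ decays at infinity, only $P_{|p}$ contributes there).

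Set $K := \gamma_{p,i}^a$. The auxiliary function
\[\tilde{u}(z) := \log|f(z)| - (N+2)\,\mathcal{G}_{K,\omega}(z) - N\,\mathcal{G}_{K,\infty}(z)\]
is subharmonic on $\widehat{\C}\setminus K$: the logarithmic singularities of $\log|f|$ at $\omega$ and $\infty$ are dominated by those of the Green's functions (thanks to the pole-order bounds above), so the distributional Laplacian remains a nonnegative measure near those two points. Because every point of the smooth arc $K$ is regular for the Dirichlet problem, both Green's functions extend continuously to zero on $K$, and $\tilde{u} \leq \log\|f\|_{L^\infty(K)}$ there. The maximum principle on $\widehat{\C}\setminus K$ then yields the rational Bernstein--Walsch bound
\[|f(z)| \leq \|f\|_{L^\infty(K)}\,\exp\bigl((N+2)\,\mathcal{G}_{K,\omega}(z) + N\,\mathcal{G}_{K,\infty}(z)\bigr), \qquad z\in \widehat{\C}\setminus K.\]

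The technical heart of the argument is then the square-root estimate
\[\sup_{z\in\mathcal{N}^{2L(\gamma_{p,i})/a}(K)} \bigl(\mathcal{G}_{K,\omega}(z) + \mathcal{G}_{K,\infty}(z)\bigr) \leq \frac{C}{\sqrt{a}},\]
with $C$ a geometric constant uniform over all edges and over $\omega$. This is the classical behaviour of the Green's function of an analytic arc near its endpoints: after a M\"obius straightening sending the circle containing $\gamma_{p,i}$ to $\R$, the set $K$ becomes a compact subinterval of $\R$ whose Green's function at infinity is (up to affine normalization) $\log|\zeta + \sqrt{\zeta^2-1}|$, which a direct expansion shows is $O(\sqrt{\delta})$ at Euclidean distance $\delta$ from an endpoint and $O(\delta)$ in the interior. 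Crucially, the endpoints of $K = \gamma_{p,i}^a$ are shifted from those of $\gamma_{p,i}$ by a distance of order $L(\gamma_{p,i})/a$, so in the straightened picture they correspond to interior analytic points of the line and the $\sqrt{\delta}$ law applies (rather than a potentially worse singularity coming from a hexagon corner). Plugging $\delta \leq 2L(\gamma_{p,i})/a$ gives $\mathcal{G}_{K,\infty}(z) \leq C/\sqrt{a}$ on the thickening. The bound on $\mathcal{G}_{K,\omega}$ reduces to the one on $\mathcal{G}_{T(K),\infty}$ through the M\"obius transformation $T(z) := 1/(z-\omega)$ sending $\omega$ to $\infty$, which has uniformly bounded distortion on a neighbourhood of $K$ because $\omega$ is uniformly separated from $\overline{\D}$.

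Combining the two displays, $|f(z)|^2 \leq e^{(4N+4)C/\sqrt{a}}\,\|f\|_{L^\infty(K)}^2$ on $\mathcal{N}^{2L(\gamma_{p,i})/a}(K)$, and summing over the finitely many gluing pairs gives $G(P) \leq e^{\beta N/\sqrt{a}} F(P)$ for a suitable geometric constant $\beta > 0$. The main obstacle is really the square-root Green's function estimate: once the pole structure of $f$ has been identified and the endpoints of $\gamma_{p,i}^a$ have been recognized as interior analytic points of their containing circles, the rest is essentially formal.
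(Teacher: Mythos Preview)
Your proof is correct and follows essentially the same strategy as the paper's: identify the pole structure of $f$ (rational with poles only at $\omega = g_{p,i}^{-1}(\infty)$ and $\infty$), apply the maximum principle to $\log|f|$ minus a suitable combination of the Green's functions $\mathcal{G}_{K,\omega}$ and $\mathcal{G}_{K,\infty}$ on $\widehat{\C}\setminus K$, and then use the $\tfrac12$-H\"older behaviour of the Green's function of a smooth arc (obtained by M\"obius-straightening to a real interval) to get the $1/\sqrt{a}$ bound on the $2L(\gamma_{p,i})/a$-thickening. Your bookkeeping of the pole orders is in fact slightly tidier than the paper's: you weight $\mathcal{G}_{K,\omega}$ by $N+2$ and $\mathcal{G}_{K,\infty}$ by $N$, matching exactly the degrees in $f\in \C_{2N+2}[z]/(z-\omega)^{N+2}$, whereas the paper uses the common factor $N$ for both, which only works up to absorbing the $+2$ into the geometric constant.
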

\begin{proof}
For any $t\in [0,1/4]$, there is a continuously-defined homography $h_t$ that maps the circle arc $\gamma_{p,i}([t,1-t])$ to the segment $[-1,1]$, then we have
\[\mathcal{G}_{\gamma_{p,i}([t,1-t]),\om}(z)=\mathcal{G}_{ [-1,1],h_t(\om)}(h_t(z)).\]
The function $\mathcal{G}_{ [-1,1]}$ is $\frac{1}{2}$-Hölder, so the constant
\[\beta=\max_{\text{edge index }(p,i)}\sqrt{2L(\gamma_{p,i})}\left[\Vert \mathcal{G}_{\gamma_{p,i}([1/4,3/4]),\infty}\Vert_{\mathcal{C}^{0,\frac{1}{2}}(\D_1)}+\Vert \mathcal{G}_{\gamma_{p,i}([1/4,3/4]),g_{p,i}^{-1}(\infty)}\Vert_{\mathcal{C}^{0,\frac{1}{2}}(\D_1)}\right],\]
is finite. Define
\begin{align*}
\mathcal{G}(z)&=\mathcal{G}_{\gamma_{p,i}([1/a,1-1/a]),\infty}(z)+\mathcal{G}_{\gamma_{p,i}^a,g_{p,i}^{-1}(\infty)}(z),\\
f(z)&=P_{|p}(z)-g_{p,i}'(z)P_{|q}(g_{p,i}(z))\in\frac{\C_{2N+2}[z]}{(z-\om)^{N+2}},\\
h(z)&=\frac{1}{N}\log\frac{|f(z)|}{\Vert f\Vert_{L^\infty(\gamma_{p,i}^a)}}-\mathcal{G}(z).
\end{align*}
We see $h$ as a function of $\widehat{\C}\setminus\gamma_{p,i}^a$ with a (finite) number of logarithmic singularities. Without loss of generality we may slightly perturb $f$ so that it does not vanish on $\gamma_{p,i}$. Notice then that:
\begin{itemize}
\item[1) ]$h\leq 0$ in $\gamma_{p,i}^a$.
\item[2) ]$h$ is subharmonic in $\widehat{\C}\setminus\gamma_{p,i}^a$: in fact $h$ is harmonic outside a finite number of logarithmic singularities at the zeroes of $f$ (where $h$ is then subharmonic) at $\infty$ and $g_{p,i}^{-1}(\infty)$ (where $h$ is also subharmonic due to the degree condition of $f$).
\end{itemize}
Thus, by maximum principle, $h\leq 0$ everywhere, so
\[|f(z)|\leq e^{N \mathcal{G}(z)}\Vert f\Vert_{L^\infty(\gamma_{p,i}^a)}.\]
Since every point of $\mathcal{N}^{\frac{2L(\gamma_{p,i})}{a}}(\gamma_{p,i}^a)$ has distance $\frac{2L(\gamma_{p,i})}{a}$ to some point of $\gamma_{p,i}^a$, then by the definition of the constant $\beta$ we have for any $z\in \mathcal{N}^{\frac{2L(\gamma_{p,i})}{a}}(\gamma_{p,i}^a)$
\[\mathcal{G}(z)\leq \beta/\sqrt{a},\]
which gives the expected result.
\end{proof}
\begin{lemma}\label{lem_est_riemannroch}
There exists a constant $c_a>0$ such that, for any 
\[f=(f_{|1},\hdots,f_{|m})\in\Oo(\mathcal{N}^{\frac{L(\gamma_{p,i})}{a}}(H_1))\times \Oo(\mathcal{N}^{\frac{L(\gamma_{p,i})}{a}}(H_2))\times\hdots\times \Oo(\mathcal{N}^{\frac{L(\gamma_{p,i})}{a}}(H_m)),\]
we have
\[c_a\sum_{p=1}^{m}\Vert f_{|p}\Vert_{L^\infty(H_p)}\leq G(f)^{1/2}+\sum_{j=0}^{g-1}|f_{|1}^{(j)}(0)|.\]
\end{lemma}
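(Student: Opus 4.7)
The plan is to argue by contradiction using a compactness/normal-families argument, exploiting that $\Om^1(X)$ is $g$-dimensional and is identified with $\C^g$ via the derivatives at the non-Weierstrass point $0 \in H_1$ (Lemma \ref{lemma_wei}).

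Suppose the conclusion fails: then for every constant $c>0$ there is some $f$ in the stated product space violating the inequality, so by homogeneity there is a sequence $(f^n)_n$ with
\[\sum_{p=1}^m \Vert f^n_{|p}\Vert_{L^\infty(H_p)}=1,\qquad G(f^n)^{1/2}+\sum_{j=0}^{g-1}\left|(f^n_{|1})^{(j)}(0)\right|\ \underset{n\to\infty}{\longrightarrow}\ 0.\]
The main step is to upgrade the bound $\Vert f^n_{|p}\Vert_{L^\infty(H_p)}\leq 1$ into uniform boundedness of $(f^n_{|p})_n$ on a compact set having $H_p$ in its interior, so that Montel's theorem applies. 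Across the interior portion $\gamma_{p,i}^a$ of any edge, the identity $f^n_{|p}(z)=g_{p,i}'(z)f^n_{|q}(g_{p,i}(z))+\delta^n(z)$, with $\Vert\delta^n\Vert_\infty\to 0$ as a consequence of $G(f^n)\to 0$, transfers the $L^\infty$ bound from $H_q$ to a one-sided strip just outside $H_p$ along $\gamma_{p,i}^a$, since $g_{p,i}$ sends such a strip into $H_q$. The remaining sectors near each vertex of $H_p$ are treated by iterating the transfer along the finite cycle of polygons $H_{p_1}=H_p,H_{p_2},\dots,H_{p_r}$ sharing this vertex: since the total angular opening equals $2\pi$, successive compositions of the associated gluing isometries tile a genuine disk in $\D$ around the vertex, and the iterated transfer yields a uniform bound of $f^n_{|p}$ on this disk.

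Montel's theorem then provides, along a subsequence, holomorphic limits $f^\infty_{|p}$ with $f^n_{|p}\to f^\infty_{|p}$ locally uniformly on a neighbourhood of $H_p$, hence uniformly on $H_p$ itself. Passing to the limit in $G(f^n)\to 0$ yields $f^\infty_{|p}(z)=g_{p,i}'(z)f^\infty_{|q}(g_{p,i}(z))$ on $\gamma_{p,i}^a$, and the identity theorem for holomorphic functions extends this identity to all of $\gamma_{p,i}$. Thus $f^\infty\in\Om^1(X)$. The conditions $(f^\infty_{|1})^{(j)}(0)=0$ for $j=0,\dots,g-1$, combined with Lemma \ref{lemma_wei} (using that $0$ is not a Weierstrass point), force $f^\infty\equiv 0$, while uniform convergence on each $H_p$ gives $\sum_p \Vert f^\infty_{|p}\Vert_{L^\infty(H_p)}=1$, the desired contradiction.

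The principal obstacle is the first step, and specifically the treatment of the neighbourhoods of vertices: the transfer of the $L^\infty$ bound across each edge is transparent on the interior portion $\gamma_{p,i}^a$, but the sectors adjacent to a vertex are not directly covered by a single gluing, and one must use the combinatorial structure of the tiling around each vertex to close the argument. This geometric bookkeeping, together with the argument by contradiction, is the reason the constant $c_a$ in the statement remains non-explicit.
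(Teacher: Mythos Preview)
Your proposal is correct and follows essentially the same route as the paper: argue by contradiction with the same normalization, propagate the $L^\infty$ bound from each $H_p$ to a genuine neighbourhood of $H_p$ by iterating the edge-transfer through the finite cycle of polygons around each vertex, apply Montel, and obtain a nonzero $\om\in\Om^1(X)$ with $\om_{|1}^{(j)}(0)=0$ for $j=0,\dots,g-1$, contradicting that $0$ is not a Weierstrass point. The paper organizes the propagation step via an inductive family of sets $H_p^{(n)}$ (with $n$ taken larger than the number of polygons meeting at any vertex), which is exactly your vertex-cycle iteration written in different bookkeeping; note also that since $\mathcal{N}^{2L(\gamma_{p,i})/a}(\gamma_{p,i}^a)\supset\mathcal{N}^{L(\gamma_{p,i})/a}(\gamma_{p,i})$, the control from $G(f^n)\to 0$ already reaches a full neighbourhood of each edge (including its endpoints), so your appeal to the identity theorem at the end is not strictly needed.
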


The constant $c_a$ that we obtain here is not explicit due to the fact that our proof is done by contradiction and compactness: it depends on the geometry and on $a$ in a non-trivial way.

\begin{proof}
Suppose that this statement is false. Then there exists a sequence $f^k=(f_{|1}^k,\hdots,f_{|m}^k)$ verifying
\[G(f^k)^{1/2}+\sum_{j=0}^{g-1}|f_1^{k(j)}(0)|\underset{k\to\infty}{\longrightarrow}0\]
and for any $k$:
\[\sum_{p=1}^{m}\Vert f_{|p}^k\Vert_{L^\infty(H_p)}=1.\]
\textbf{Claim}: there exists a constant $C>0$ and some neighbourhoods $(\tilde{H}_p)_p$ of $(H_p)_p$ such that
\[\sum_{p=1}^{m}\Vert f_{|p}^k\Vert_{L^\infty(\tilde{H_p})}\leq C.\]
This is obtained by induction. Let us first remind that, denoting $L$ the minimal length among all sides, we have
\[G(f^k)\geq \sum_{(p,i)\to (q,j)}\Vert f^k_{|p}-g_{p,i}'f^k_{|q}\circ g_{p,i}\Vert_{L^\infty\left(\mathcal{N}^{\frac{L}{a}}(\gamma_{p,i})\right)}^2.\]
\begin{itemize}[label=\textbullet]
\item For every $p=1,\hdots,m$, we have
\[\Vert f^k_{|p}\Vert_{L^\infty(H_p)}\leq 1.\]
\item For every edge index $(p,i)$, denote $(q_{p,i},j_{p,i})$ the corresponding edge. Then for every $z\in \mathcal{N}^{\frac{L}{a}}(\gamma_{p,i})\cap g_{p,i}^{-1}(H_{q_{p,i}})$, we have
\begin{align*}
|f^k_{|p}(z)|&\leq |f^k_{|p}(z)- g_{p,i}'(z)f^k_{|q_{p,i}}\circ g_{q_{p,i},j_{p,i}}(z)|+| g_{p,i}'(z)f^k_{|q_{p,i}}\circ  g_{p,i}(z)|\\
&\leq G(f^k)^\frac{1}{2}+\Vert g_{p,i}\Vert_{L^\infty(\D)}.
\end{align*}
\item For every $p$, and every $n\in\N$, let $H_p^{(n)}$ be the set defined inductively by
\[\begin{cases}
H_p^{(0)}=H_p\\
H_p^{(n)}=H_p^{(n-1)}\cup\bigcup_{i=1}^{c(p)}\left(\mathcal{N}^{\frac{L}{a}}(\gamma_{p,i})\cap g_{p,i}^{-1}(H_{q_{p,i}}^{(n-1)})\right).
\end{cases}\]
Then by the same computation as above, each $f_{|p}$ is bounded independently of $k$ in $L^\infty(H_{p}^{(n)})$ by
\[\sup_{p=1,\hdots,m}\Vert f^k_{|p}\Vert_{L^\infty(H^{(n)}_p)}\leq G(f^k)^\frac{1}{2}+\left(\sup_{(p,i)}\Vert g_{p,i}'\Vert_{L^\infty(\D)}\right)\left(\sup_{p=1,\hdots,m} \Vert f^{k}_{|p}\Vert_{H_p^{(n-1)}}\right).\]
\item Taking $n$ to be larger than the maximal number of vertices of the polygons $(H_p)$ that are identified to a single point (we may take the worst case with $n>c(1)+c(2)+\hdots+c(m)$), then $H_p^{(n)}$ is a neighbourhood of $H_p$ in $\D$, that is denoted $\tilde{H}_p$ in the claim.
\end{itemize}

In particular, up to extraction we may suppose that each $f^k_{|p}$ converges locally uniformly in a (possibly smaller) neighbourhood of $H_p$ to some limit $f_{|p}$ when $k\to\infty$, which verify the periodicity relation
\[f_{|p}=g_{p,i}'f_{|q}\circ g_{p,i}\]
when $(p,i)\to (q,j)$. As a consequence, $f=(f_{|1},\hdots,f_{|m})$ is an element of $\Om^1(X)$, that is non-zero since $\sum_{p=1}^{m}\Vert f_{|p}\Vert_{L^\infty(H_p)}=1$ (because this is verified by $f^{k}_{|p}$, which converges to $f_{|p}$ up to the boundary), and such that $\sum_{j=0}^{g-1}|f^{(j)}_{|1}(0)|=0$: this is in contradiction with the fact that $0$ is not a Weierstrass point, meaning that $\om\in\Om^1(X)\mapsto (\om_{|1}(0),\hdots,\om_{|1}^{(g-1)}(0))$ must be injective.
\end{proof}
We may now prove the theorem \ref{the_conv}:
\begin{proof}
Let now $\rho$ be as in the statement of theorem \ref{the_conv}, let $\rho'<\rho$ also verifying the same lower bound. We may choose $a$ large enough such that
\[\rho^{'2} e^{\frac{C}{a}+\frac{\beta}{\sqrt{a}}}<\rho^2.\]
Then applying successively the lemmas \ref{lem_est_comparaison}, \ref{lem_est_sample}, \ref{lem_est_bernstein}, we have:
\begin{align*}
G(P^{N,\om})&\leq e^{\frac{N\beta}{\sqrt{a}}}F(P^{N,\om})\text{ by lemma \ref{lem_est_bernstein}}\\
&\leq CaN^4e^{\frac{N\beta}{\sqrt{a}}+\frac{NC}{a}}E^N(P^{N,\om})\text{ by lemma \ref{lem_est_sample}}\\
& \leq \rho^{'2N}CaN^4e^{\frac{N\beta}{\sqrt{a}}+\frac{NC}{a}}\left(\sum_{j=0}^{g-1}\left|\om^{(j)}_{|1}(0)\right|\right)^2\text{ by lemma \ref{lem_est_comparaison}}.\\
&\leq \rho^{2N}\left(\sum_{j=0}^{g-1}\left|\om^{(j)}_{|1}(0)\right|\right)^2\text{ for large enough }N
\end{align*}
We apply now lemma \ref{lem_est_riemannroch} to $P^{N,\om}-\om$ ; note that $G(P^{N,\om}-\om)=G(P^{N,\om})$ by the periodicity of $\om$, and the first $g-1$ derivatives of $P^{N,\om}_{|1}-\om_{|1}$ at $0\in H_1$ are zero, so:
\begin{align*}
c_a\sum_{p=1}^{m}\Vert P^{N,\om}_{|p}-\om_{|p}\Vert_{L^\infty(H_p)}&\leq G(P^{N,\om}-\om)^\frac{1}{2}\leq \rho^N\sum_{j=0}^{g-1}\left|\om^{(j)}_{|1}(0)\right|.
\end{align*}
\end{proof}

Let us briefly comment on the fact that the lower bound on $\rho$ given in theorem \ref{the_conv} is optimal: suppose that the convergence is still valid for some $\rho$ strictly lower than this bound, meaning that for some $p$ the set
\[\{z\in\C:\mathcal{G}_{ H_p}(z,\infty)<\log(1/\rho)\}\]
is not included in the disk $\D$. Then by Bernstein-Walsh theorem this gives us an extension of $\om_{|p}$ to the set $\{z\in\C:\mathcal{G}_{ H_p}(z,\infty)<\log(1/\rho)\}$ which contains a point of $\partial \D$: this is a contradiction, as the functions $\om_{|p}$ cannot be extended beyond the boundary $\partial \D$.

\appendix

\section{Some meromorphic functions on genus 2 surfaces}

\begin{figure}[!htb]
    \centering
    \begin{tabular}[t]{ccc}
        \begin{tabular}{c}
            \hfill
            \begin{subfigure}[t]{0.22\textwidth}
                \centering
                \includegraphics[width=\linewidth]{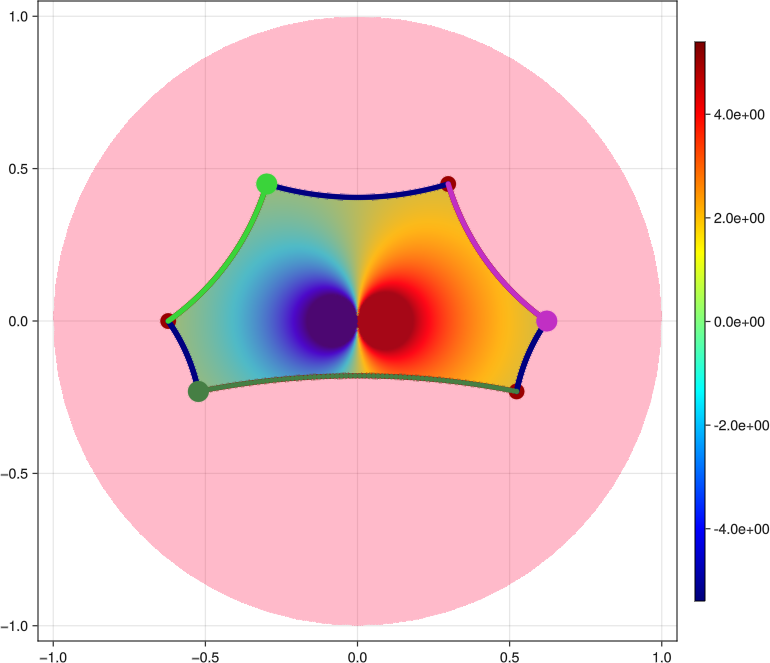}
            \end{subfigure}\\
            \hfill
            \begin{subfigure}[t]{0.22\textwidth}
                \centering
                \includegraphics[width=\linewidth]{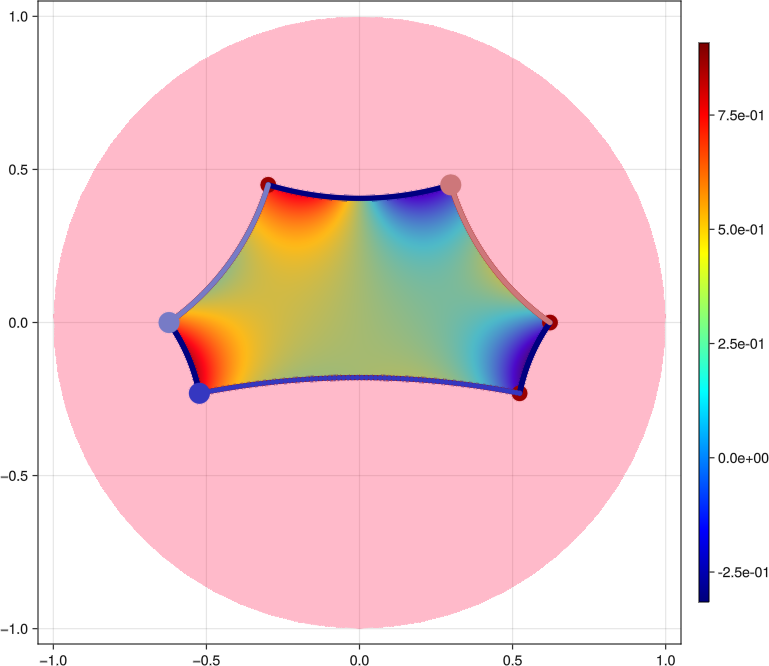}
            \end{subfigure}
        \end{tabular}
        &
        \begin{tabular}{c}
            \hspace{-1cm}
            \begin{subfigure}[t]{0.22\textwidth}
                \centering
                \includegraphics[width=\linewidth]{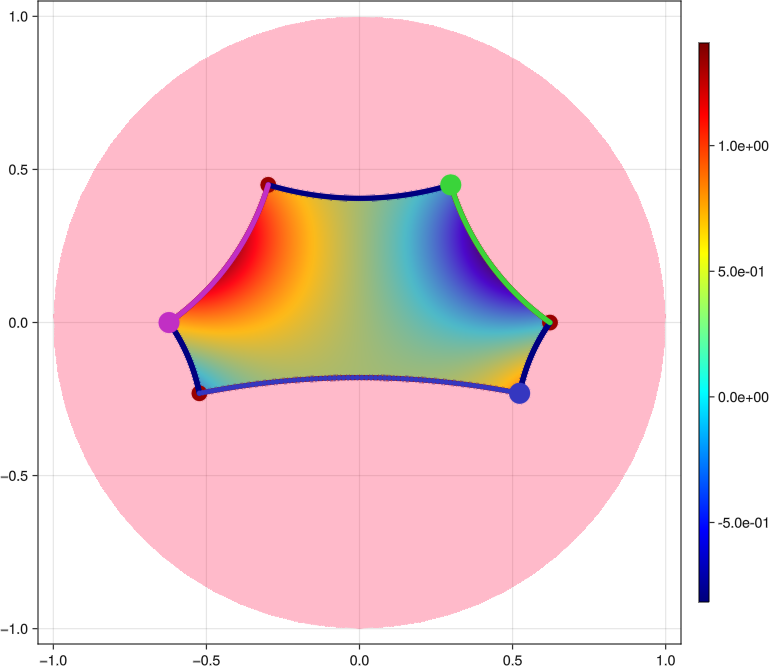}
            \end{subfigure}\\
            \hspace{-1cm}
            \begin{subfigure}[t]{0.22\textwidth}
                \centering
                \includegraphics[width=\linewidth]{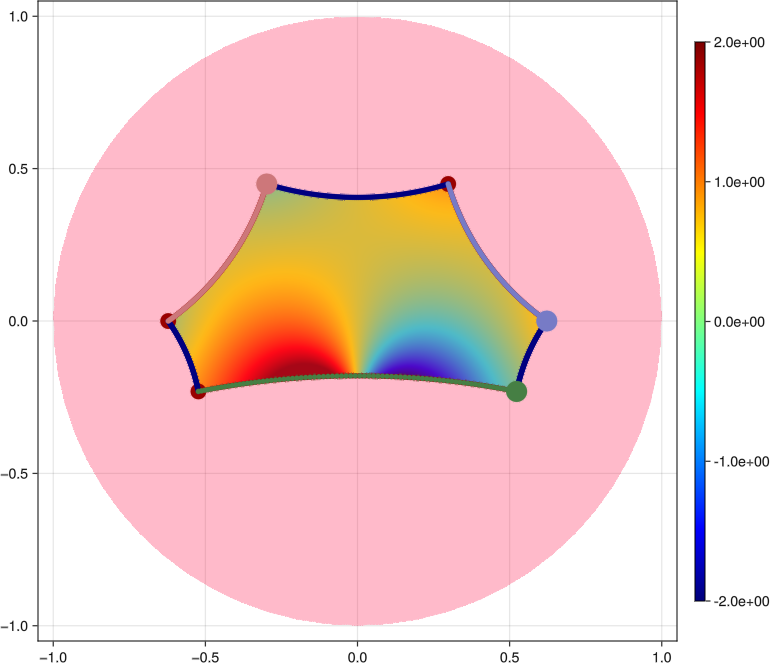}
            \end{subfigure}
        \end{tabular}
        &
        \begin{tabular}{c}
            \hspace{-0.75cm}
            \begin{subfigure}[t]{0.42\textwidth}
                \centering
                \includegraphics[width=\linewidth]{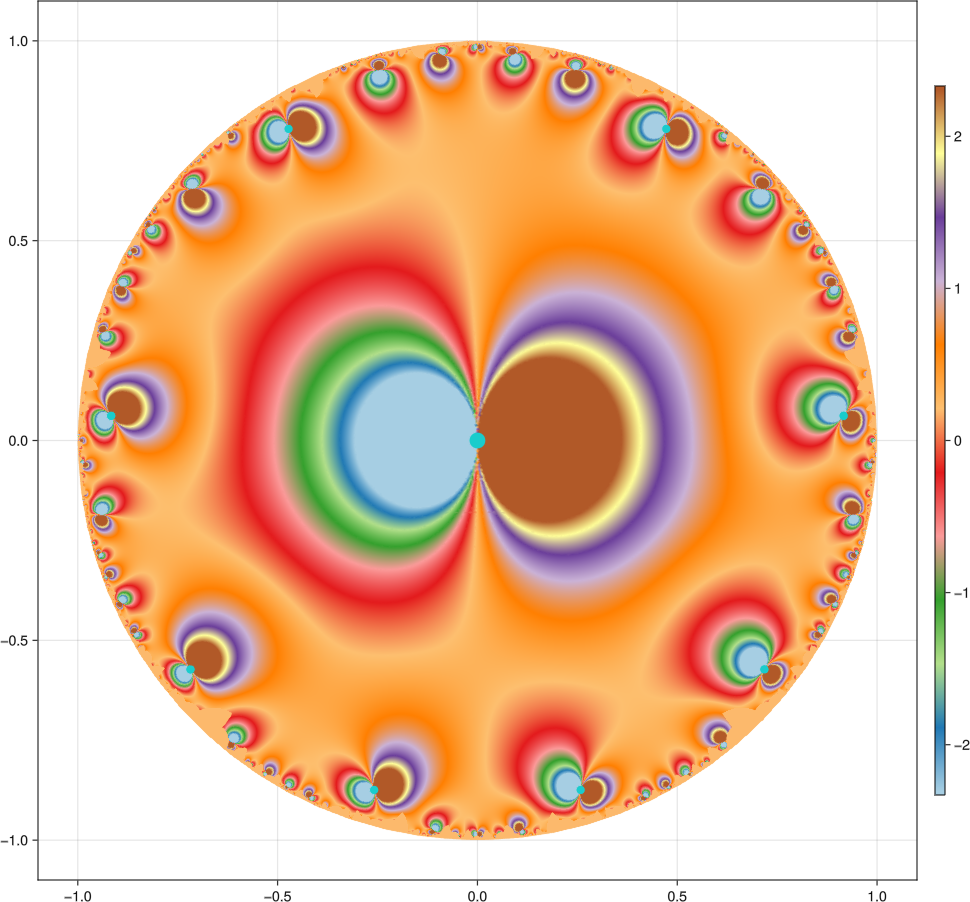}
            \end{subfigure}
        \end{tabular}
    \end{tabular}
    \caption{Pole of order $1$ on Bolza's surface. The Real part.}
    \label{fig_BolzaOrder1R}
\end{figure}

\begin{figure}[!htb]
    \centering
    \begin{tabular}[t]{ccc}
        \begin{tabular}{c}
            \hfill
            \begin{subfigure}[t]{0.22\textwidth}
                \centering
                \includegraphics[width=\linewidth]{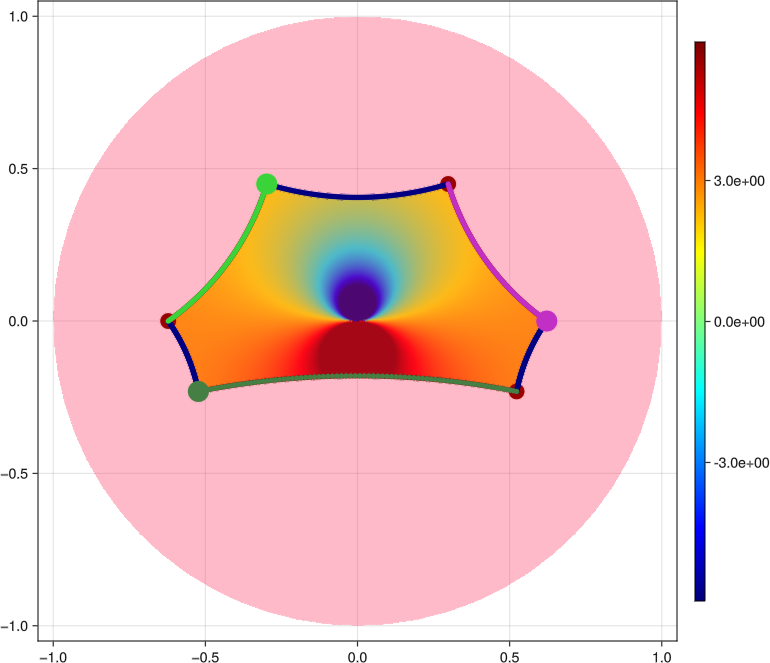}
            \end{subfigure}\\
            \hfill
            \begin{subfigure}[t]{0.22\textwidth}
                \centering
                \includegraphics[width=\linewidth]{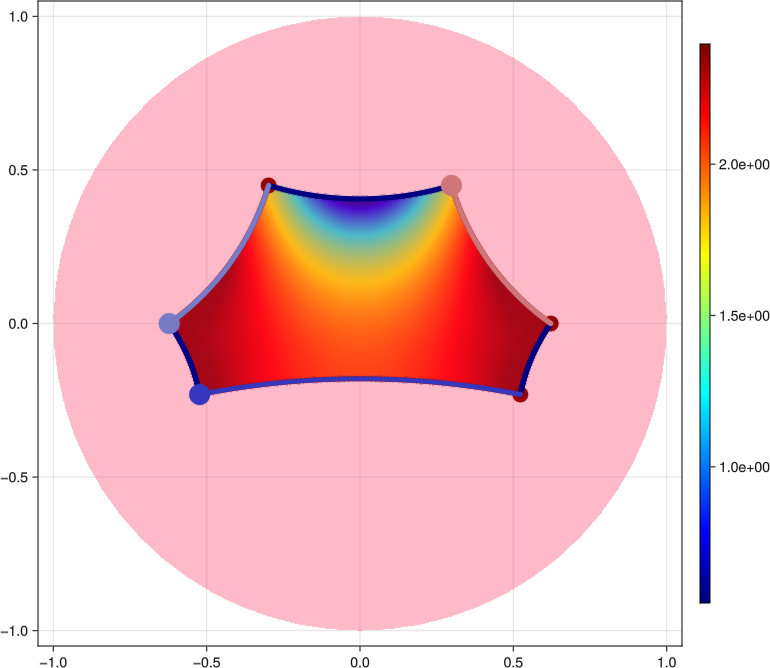}
            \end{subfigure}
        \end{tabular}
        &
        \begin{tabular}{c}
            \hspace{-1cm}
            \begin{subfigure}[t]{0.22\textwidth}
                \centering
                \includegraphics[width=\linewidth]{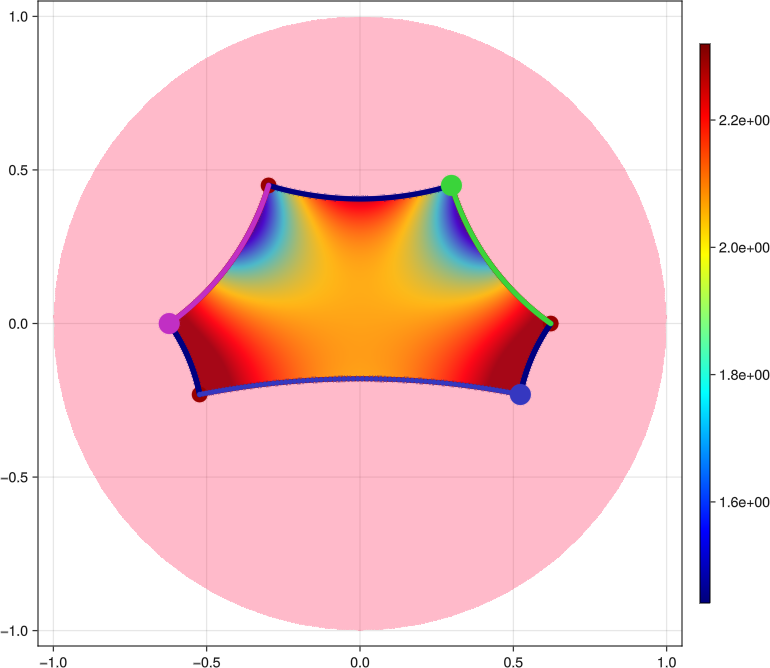}
            \end{subfigure}\\
            \hspace{-1cm}
            \begin{subfigure}[t]{0.22\textwidth}
                \centering
                \includegraphics[width=\linewidth]{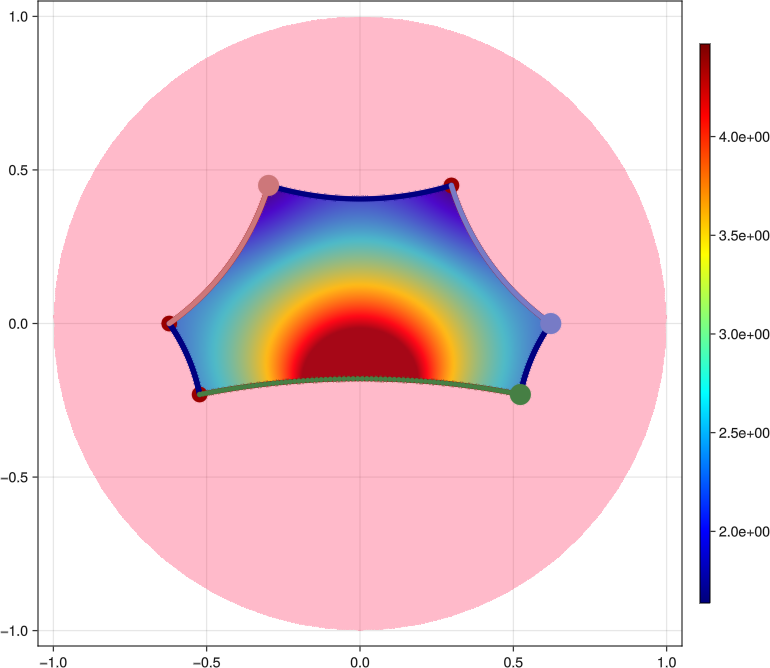}
            \end{subfigure}
        \end{tabular}
        &
        \begin{tabular}{c}
            \hspace{-0.75cm}
            \begin{subfigure}[t]{0.42\textwidth}
                \centering
                \includegraphics[width=\linewidth]{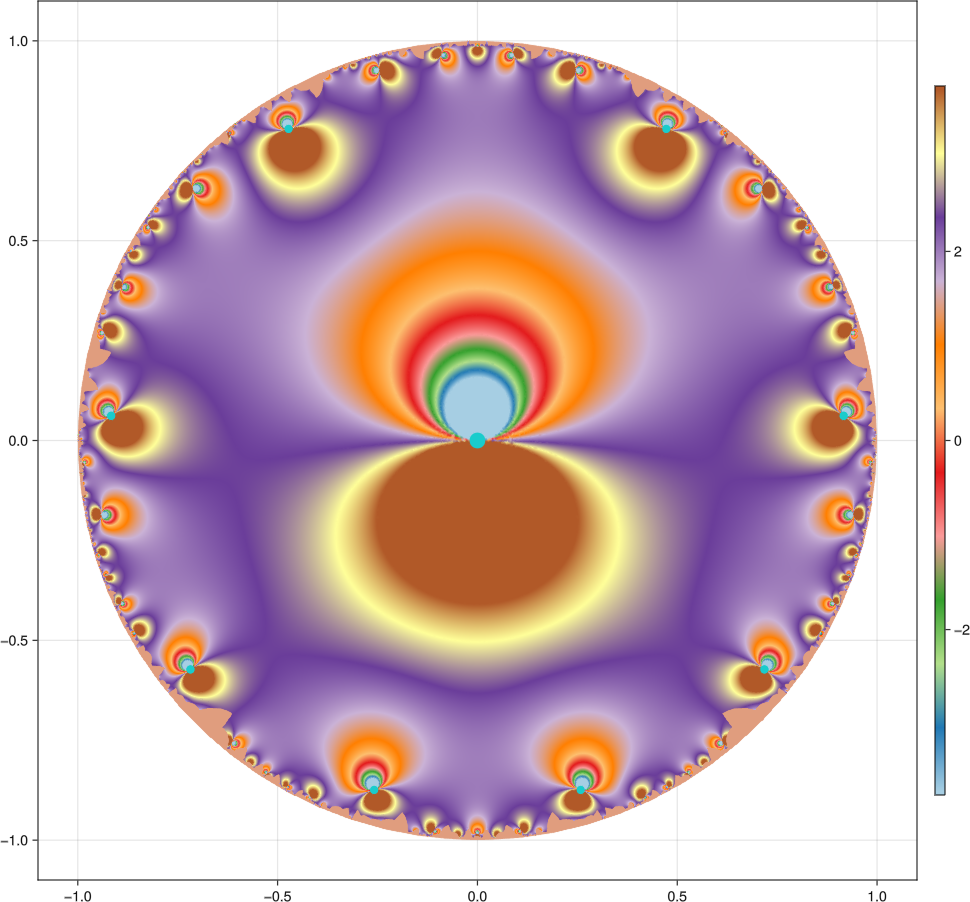}
            \end{subfigure}
        \end{tabular}
    \end{tabular}
    \caption{Pole of order $1$ on Bolza's surface. The Imaginary part.}
    \label{fig_BolzaOrder1I}
\end{figure}

\begin{figure}[!htb]
    \centering
    \begin{tabular}[t]{ccc}
        \begin{tabular}{c}
            \hfill
            \begin{subfigure}[t]{0.22\textwidth}
                \centering
                \includegraphics[width=\linewidth]{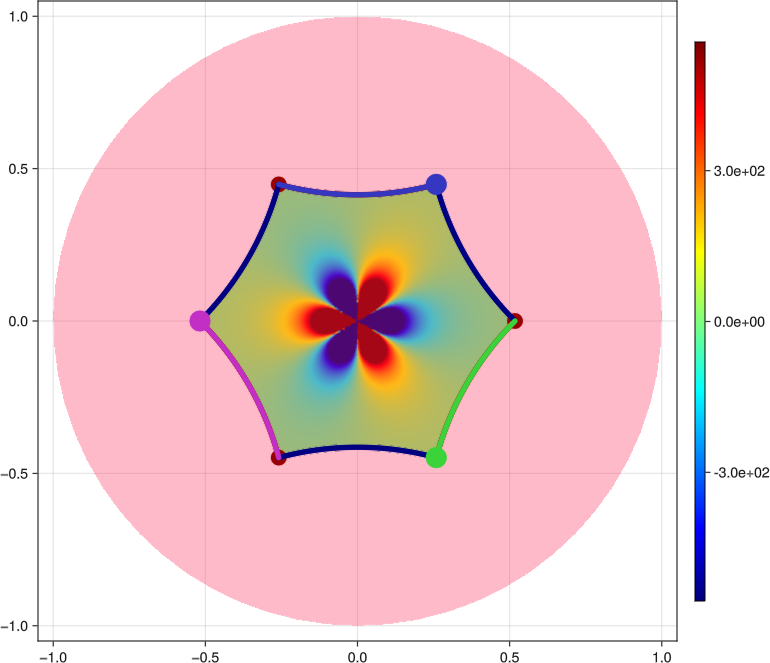}
            \end{subfigure}\\
            \hfill
            \begin{subfigure}[t]{0.22\textwidth}
                \centering
                \includegraphics[width=\linewidth]{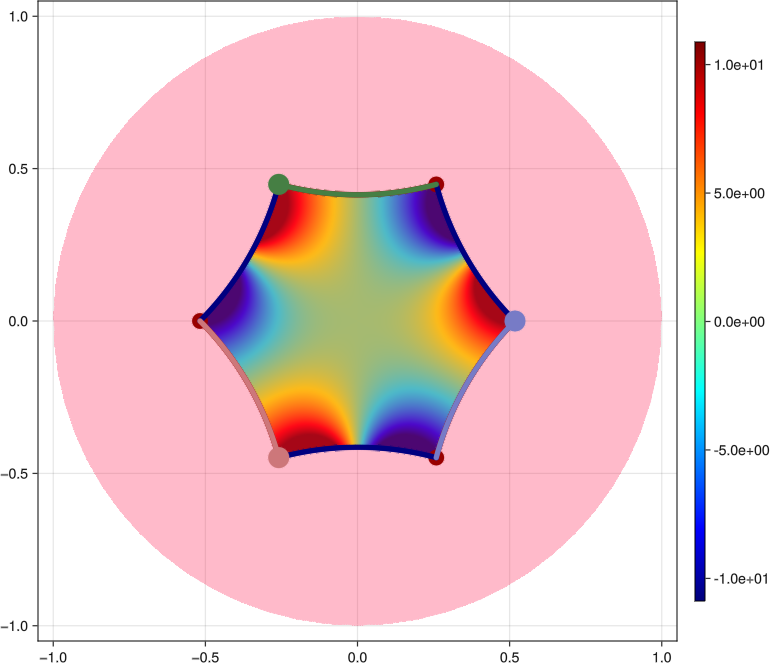}
            \end{subfigure}
        \end{tabular}
        &
        \begin{tabular}{c}
            \hspace{-1cm}
            \begin{subfigure}[t]{0.22\textwidth}
                \centering
                \includegraphics[width=\linewidth]{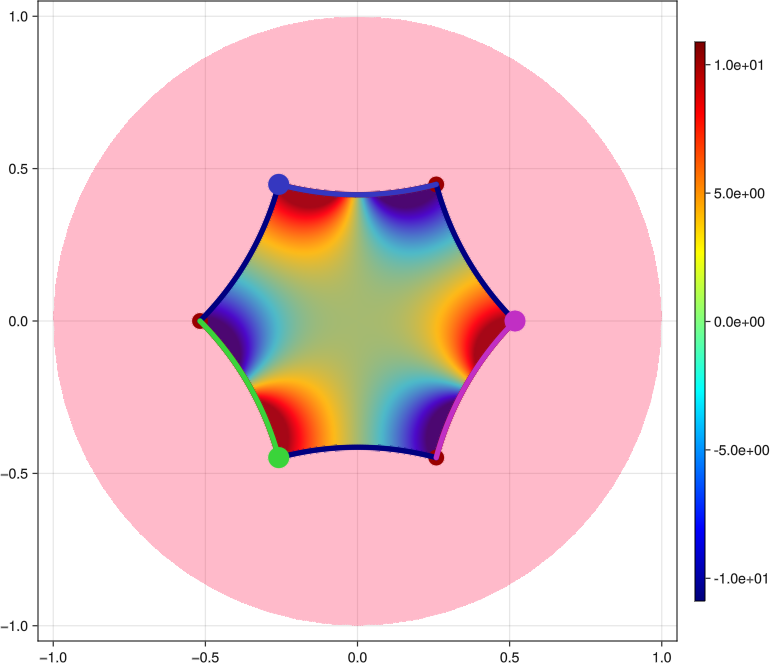}
            \end{subfigure}\\
            \hspace{-1cm}
            \begin{subfigure}[t]{0.22\textwidth}
                \centering
                \includegraphics[width=\linewidth]{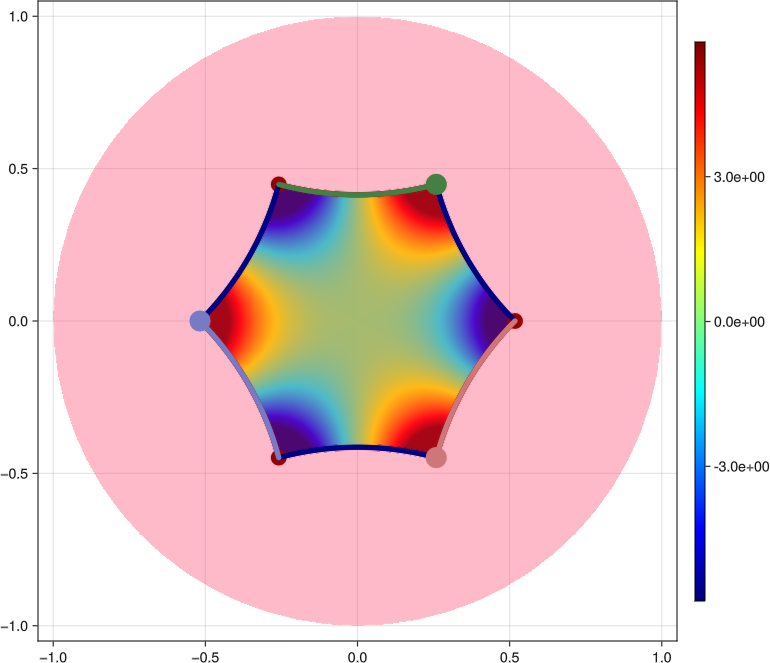}
            \end{subfigure}
        \end{tabular}
        &
        \begin{tabular}{c}
            \hspace{-0.75cm}
            \begin{subfigure}[t]{0.45\textwidth}
                \centering
                \includegraphics[width=\linewidth]{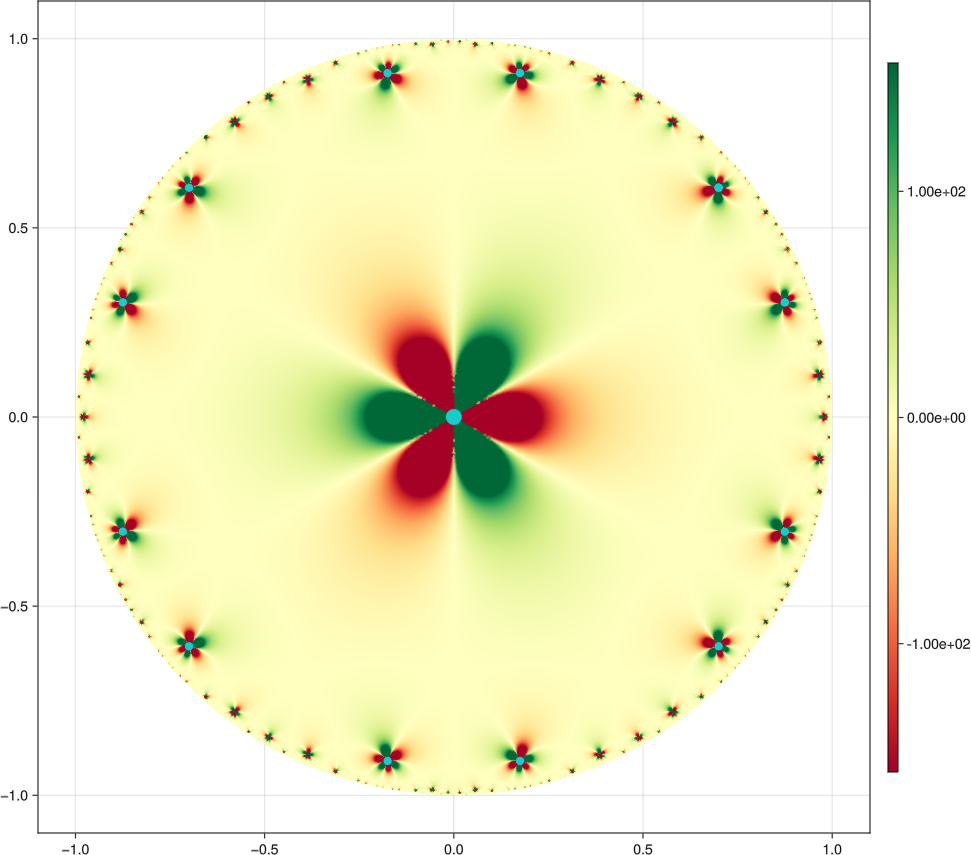}
            \end{subfigure}
        \end{tabular}
    \end{tabular}
    \caption{Pole of order $3$ on  $D_6 \times \mathbb{Z}_2$ surface. The Real part.}
    \label{fig_Weierstrass3D6R}
\end{figure}

\begin{figure}[!htb]
    \centering
    \begin{tabular}[t]{ccc}
        \begin{tabular}{c}
            \hfill
            \begin{subfigure}[t]{0.22\textwidth}
                \centering
                \includegraphics[width=\linewidth]{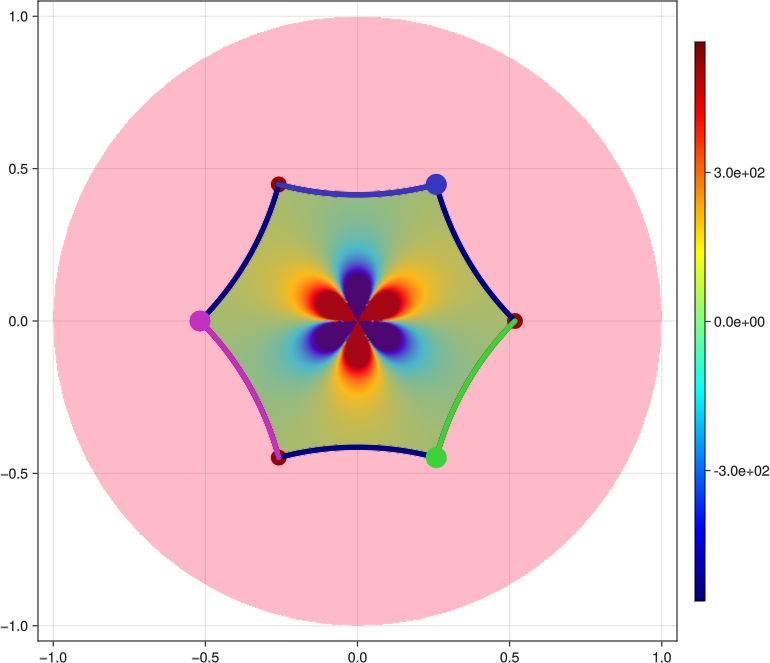}
            \end{subfigure}\\
            \hfill
            \begin{subfigure}[t]{0.22\textwidth}
                \centering
                \includegraphics[width=\linewidth]{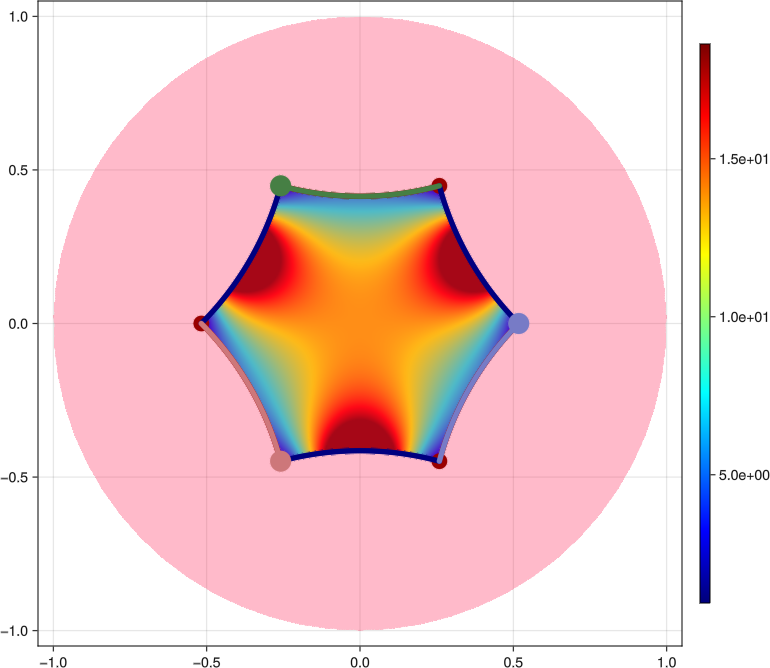}
            \end{subfigure}
        \end{tabular}
        &
        \begin{tabular}{c}
            \hspace{-1cm}
            \begin{subfigure}[t]{0.22\textwidth}
                \centering
                \includegraphics[width=\linewidth]{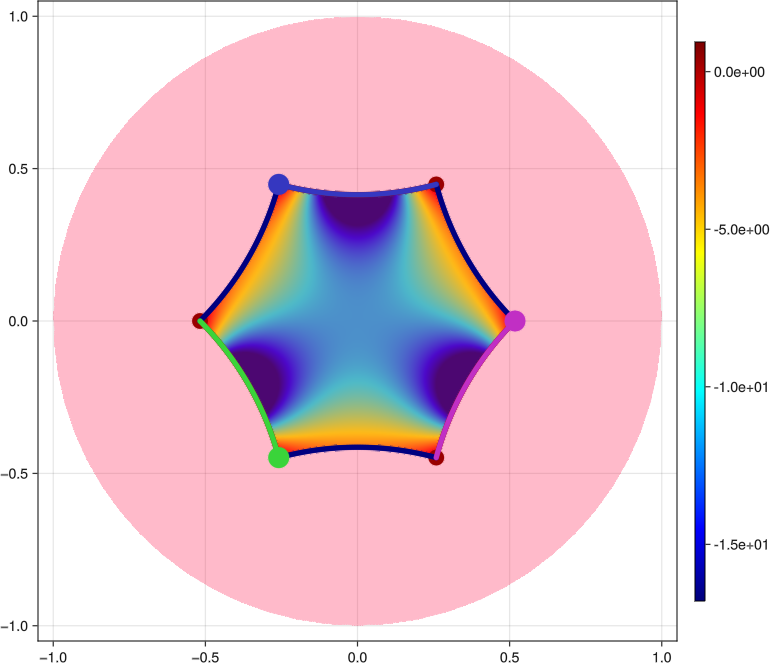}
            \end{subfigure}\\
            \hspace{-1cm}
            \begin{subfigure}[t]{0.22\textwidth}
                \centering
                \includegraphics[width=\linewidth]{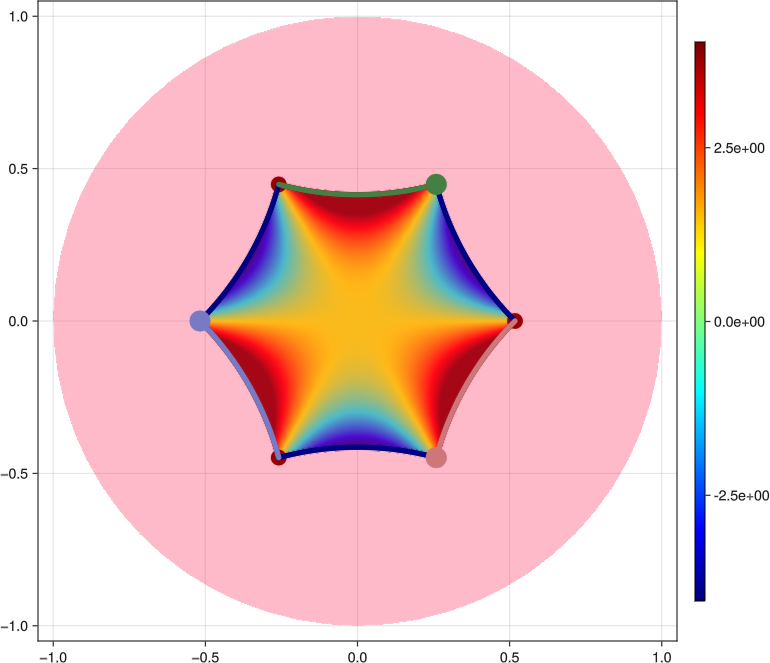}
            \end{subfigure}
        \end{tabular}
        &
        \begin{tabular}{c}
            \hspace{-0.75cm}
            \begin{subfigure}[t]{0.45\textwidth}
                \centering
                \includegraphics[width=\linewidth]{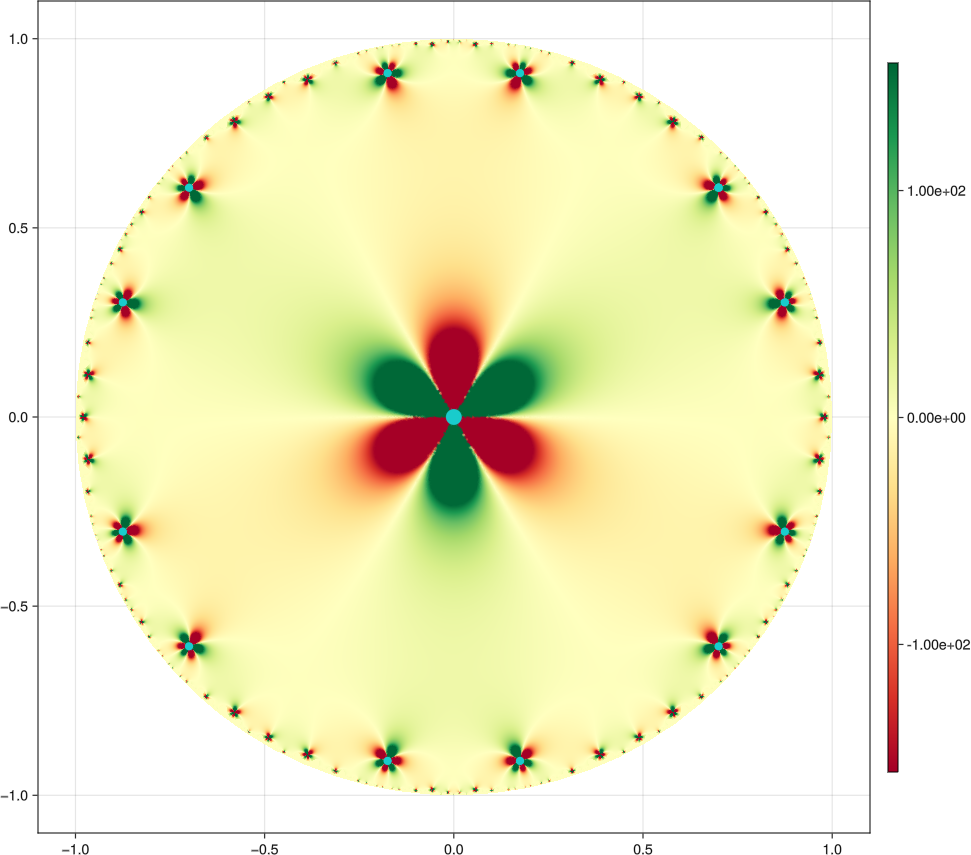}
            \end{subfigure}
        \end{tabular}
    \end{tabular}
    \caption{Pole of order $3$ on $D_6 \times \mathbb{Z}_2$ surface. The Imaginary part.}
    \label{fig_Weierstrass3D6I}
\end{figure}

\begin{figure}[!htb]
    \centering
    \begin{tabular}[t]{ccc}
        \begin{tabular}{c}
            \hfill
            \begin{subfigure}[t]{0.22\textwidth}
                \centering
                \includegraphics[width=\linewidth]{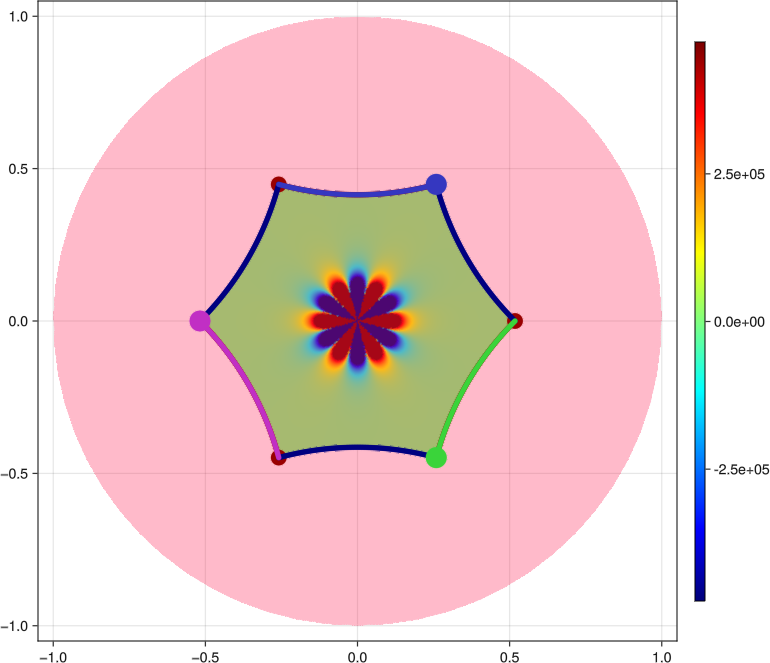}
            \end{subfigure}\\
            \hfill
            \begin{subfigure}[t]{0.22\textwidth}
                \centering
                \includegraphics[width=\linewidth]{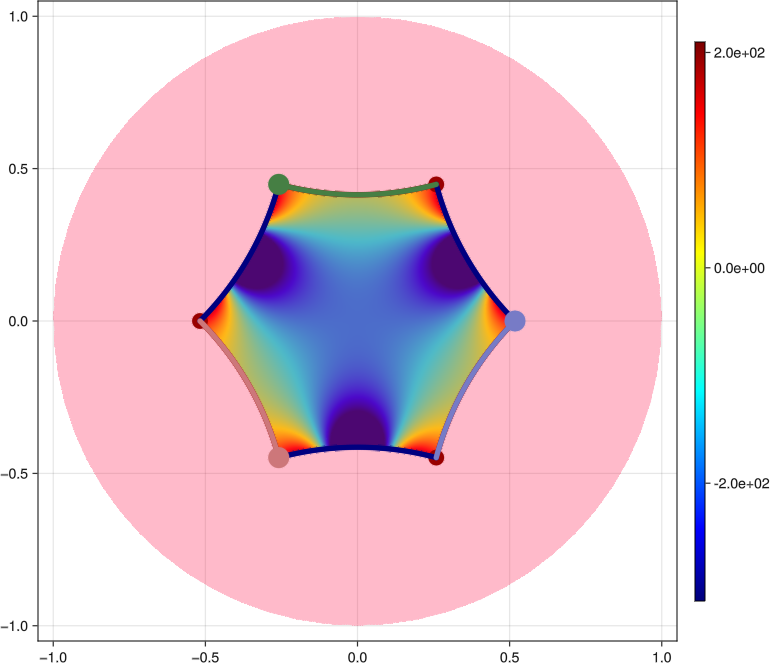}
            \end{subfigure}
        \end{tabular}
        &
        \begin{tabular}{c}
            \hspace{-1cm}
            \begin{subfigure}[t]{0.22\textwidth}
                \centering
                \includegraphics[width=\linewidth]{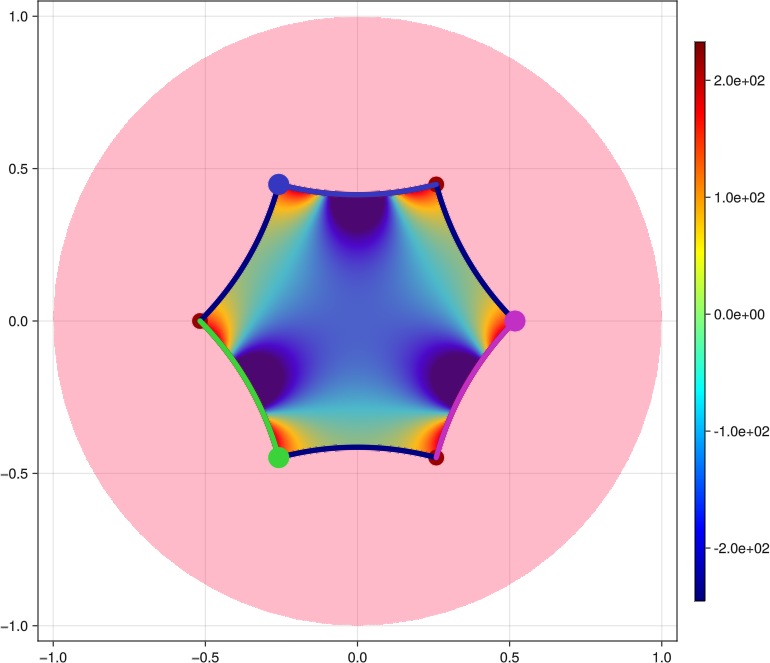}
            \end{subfigure}\\
            \hspace{-1cm}
            \begin{subfigure}[t]{0.22\textwidth}
                \centering
                \includegraphics[width=\linewidth]{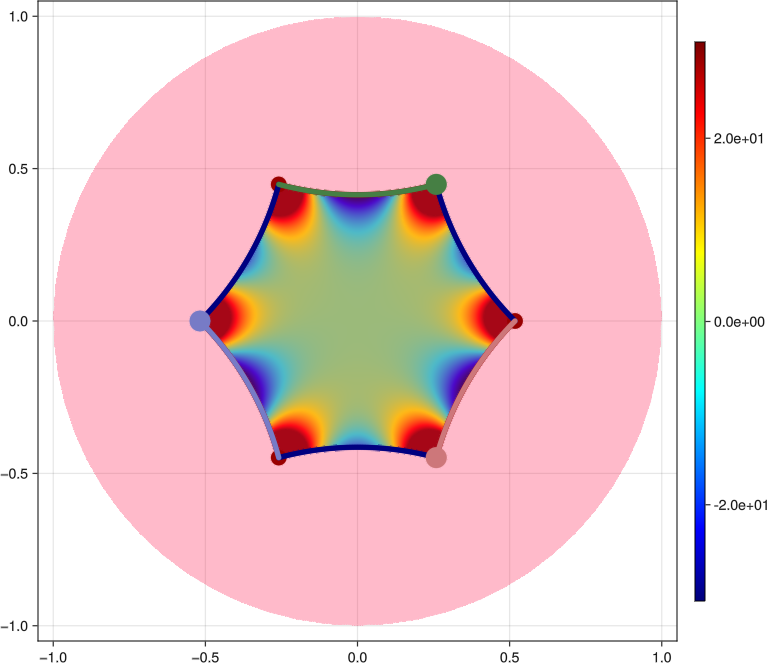}
            \end{subfigure}
        \end{tabular}
        &
        \begin{tabular}{c}
            \hspace{-0.75cm}
            \begin{subfigure}[t]{0.45\textwidth}
                \centering
                \includegraphics[width=\linewidth]{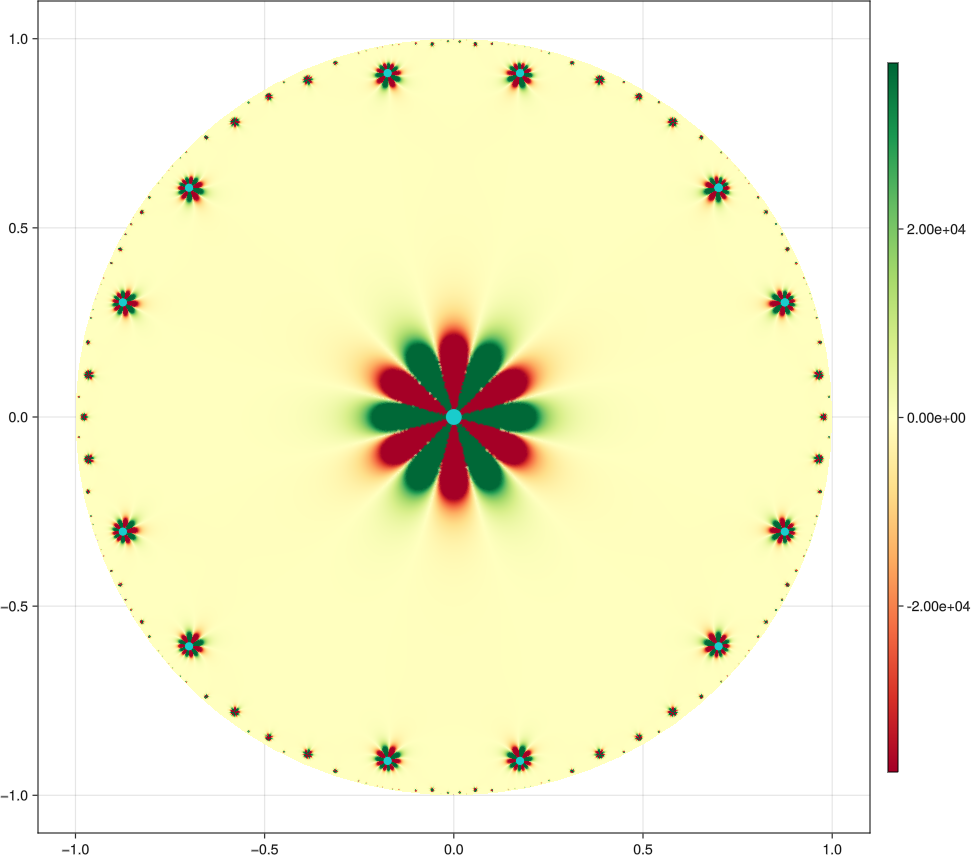}
            \end{subfigure}
        \end{tabular}
    \end{tabular}
    \caption{Pole of order $6$ on  $D_6 \times \mathbb{Z}_2$ surface. The Real part.}
    \label{fig_Weierstrass6D6R}
\end{figure}

\begin{figure}[!htb]
    \centering
    \begin{tabular}[t]{ccc}
        \begin{tabular}{c}
            \hfill
            \begin{subfigure}[t]{0.22\textwidth}
                \centering
                \includegraphics[width=\linewidth]{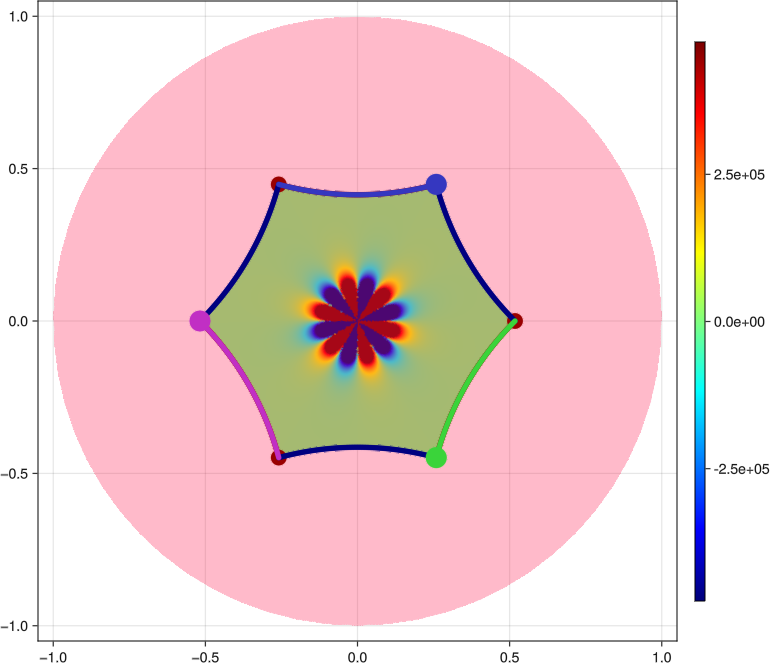}
            \end{subfigure}\\
            \hfill
            \begin{subfigure}[t]{0.22\textwidth}
                \centering
                \includegraphics[width=\linewidth]{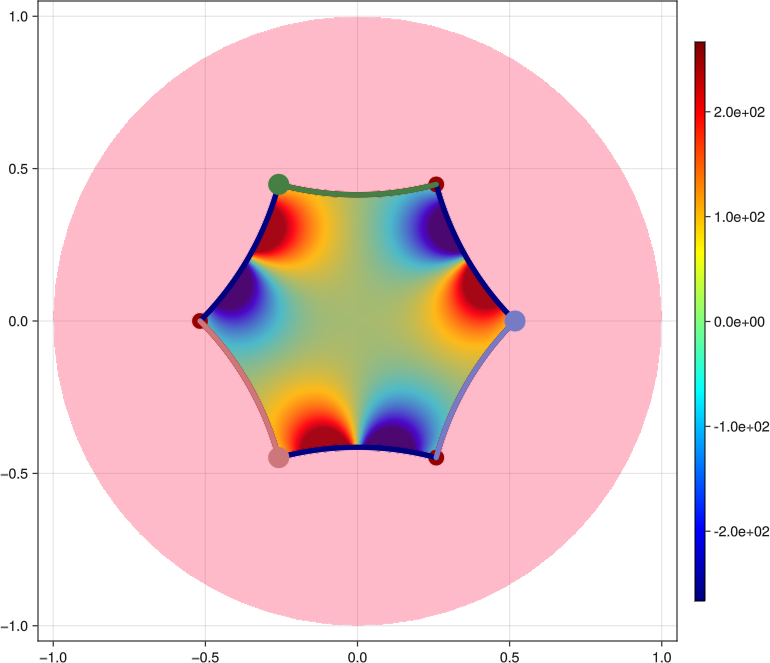}
            \end{subfigure}
        \end{tabular}
        &
        \begin{tabular}{c}
            \hspace{-1cm}
            \begin{subfigure}[t]{0.22\textwidth}
                \centering
                \includegraphics[width=\linewidth]{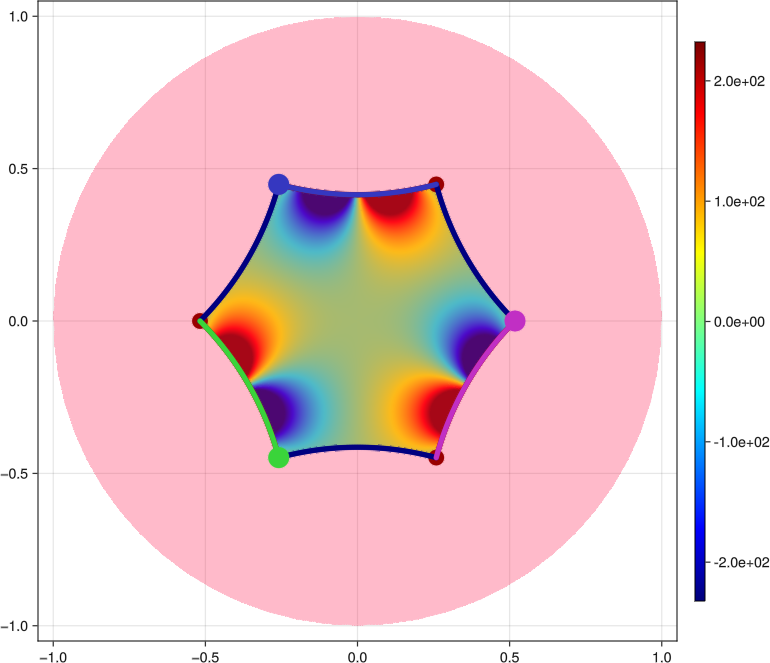}
            \end{subfigure}\\
            \hspace{-1cm}
            \begin{subfigure}[t]{0.22\textwidth}
                \centering
                \includegraphics[width=\linewidth]{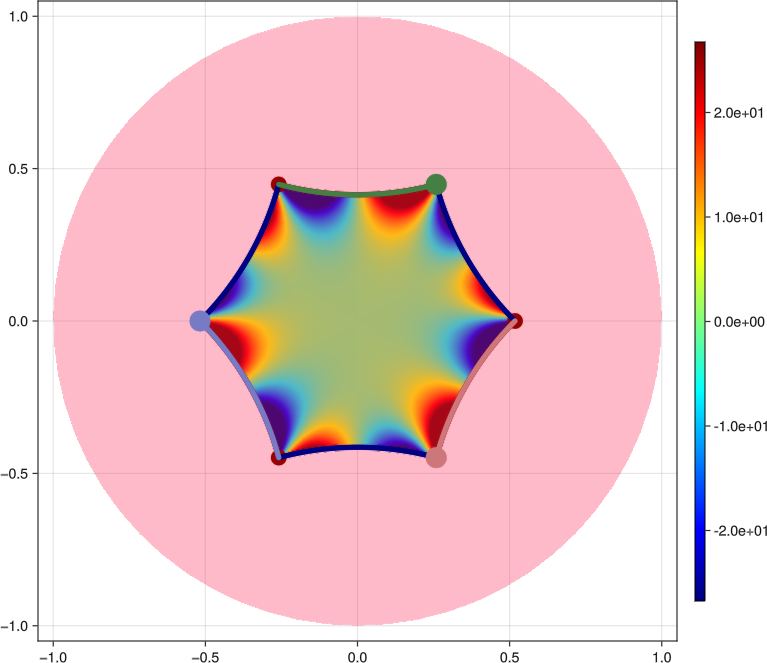}
            \end{subfigure}
        \end{tabular}
        &
        \begin{tabular}{c}
            \hspace{-0.75cm}
            \begin{subfigure}[t]{0.45\textwidth}
                \centering
                \includegraphics[width=\linewidth]{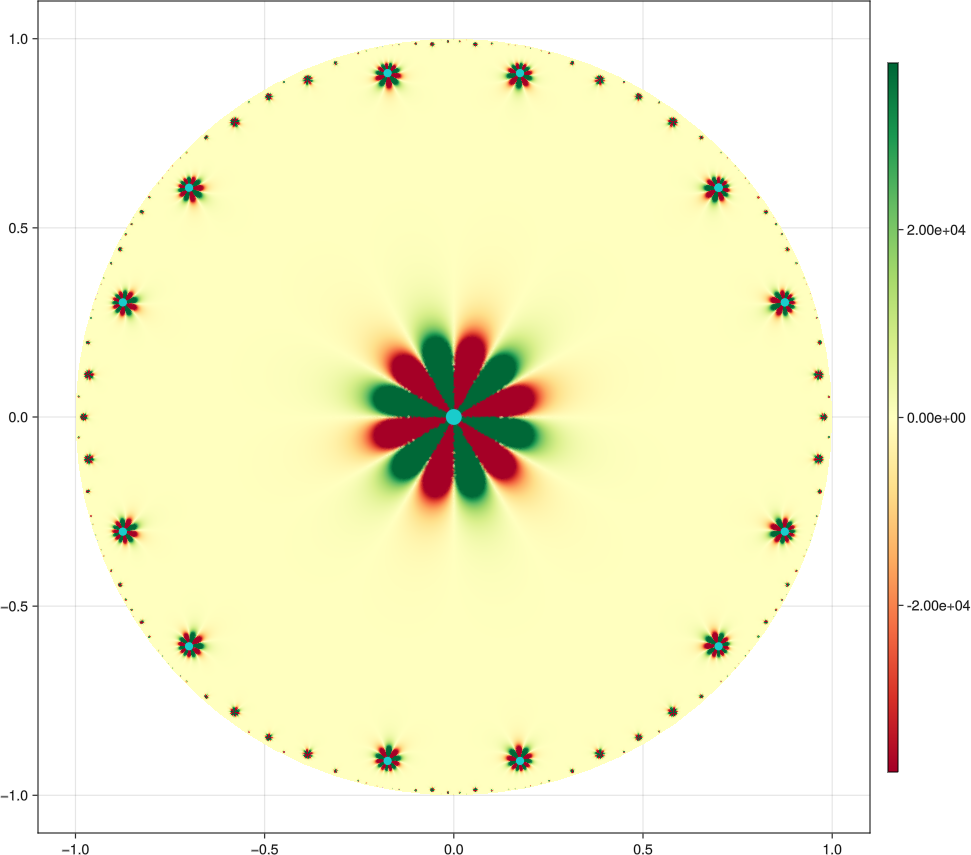}
            \end{subfigure}
        \end{tabular}
    \end{tabular}
    \caption{Pole of order $6$ on $D_6 \times \mathbb{Z}_2$ surface. The Imaginary part.}
    \label{fig_Weierstrass6D6I}
\end{figure}

\newpage 
\

\newpage 

\bibliographystyle{plain}
\bibliography{biblio}

\begin{thebibliography}{10}

\bibitem{A93}
Vladimir Andrievskii.
\newblock Uniform harmonic approximation on compact sets in {{\(\mathbb{R}^
  k\)}}, {{\(k{{\geqq}} 3\)}}.
\newblock {\em SIAM J. Math. Anal.}, 24(1):216--222, 1993.

\bibitem{BL07}
Thomas Bagby and Norman Levenberg.
\newblock Bernstein theorems for harmonic functions.
\newblock In {\em Methods of Approximation Theory in Complex Analysis and
  Mathematical Physics: Leningrad, May 13--24, 1991}, pages 7--18. Springer,
  2007.

\bibitem{BT05}
Timo Betcke and Lloyd~N Trefethen.
\newblock Reviving the method of particular solutions.
\newblock {\em SIAM review}, 47(3):469--491, 2005.

\bibitem{Lemniscate}
Christopher~J. Bishop, Alexandre Eremenko, and Kirill Lazebnik.
\newblock On the shapes of rational lemniscates.
\newblock {\em Geom. Funct. Anal.}, 35(2):359--407, 2025.

\bibitem{bolza1887binary}
Bolza.
\newblock On binary sextics with linear transformations into themselves.
\newblock {\em Am. J. Math.}, 10:47--70, 1887.

\bibitem{BT22}
Pablo~D Brubeck and Lloyd~N Trefethen.
\newblock Lightning stokes solver.
\newblock {\em SIAM Journal on Scientific Computing}, 44(3):A1205--A1226, 2022.

\bibitem{arbtheta}
Noam~D Elkies and Jean Kieffer.
\newblock Fast evaluation of riemann theta functions in any dimension.
\newblock {\em arXiv preprint arXiv:2505.22382}, 2025.

\bibitem{FKH92}
Hershel~M Farkas, Irwin Kra, Hershel~M Farkas, and Irwin Kra.
\newblock {\em Riemann surfaces}.
\newblock Springer, 1992.

\bibitem{frauendiener2015computational}
J{\"o}rg Frauendiener and Christian Klein.
\newblock Computational approach to hyperelliptic {Riemann} surfaces.
\newblock {\em Lett. Math. Phys.}, 105(3):379--400, 2015.

\bibitem{GT19_pnas}
Abinand Gopal and Lloyd~N Trefethen.
\newblock New laplace and helmholtz solvers.
\newblock {\em Proceedings of the National Academy of Sciences},
  116(21):10223--10225, 2019.

\bibitem{GT19}
Abinand Gopal and Lloyd~N Trefethen.
\newblock Solving laplace problems with corner singularities via rational
  functions.
\newblock {\em SIAM Journal on Numerical Analysis}, 57(5):2074--2094, 2019.

\bibitem{hubbard2016teichmuller}
John~H Hubbard.
\newblock {\em Teichm{\"u}ller theory and applications to geometry, topology,
  and dynamics}, volume~2.
\newblock Matrix Editions, 2016.

\bibitem{Johansson2017arb}
Fredrik Johansson.
\newblock Arb: efficient arbitrary-precision midpoint-radius interval
  arithmetic.
\newblock {\em IEEE Trans. Comput.}, 66(8):1281--1292, 2017.

\bibitem{KOO23}
Chiu-Yen Kao, Braxton Osting, and {\'E}douard Oudet.
\newblock Harmonic functions on finitely connected tori.
\newblock {\em SIAM Journal on Numerical Analysis}, 61(6):2795--2812, 2023.

\bibitem{NguyenDjebbar}
Thanh~Van Nguyen and Bachir Djebbar.
\newblock Propri{\'e}t{\'e}s asymptotiques d'une suite orthonormale de
  polyn{\^o}mes harmoniques. ({Asymptotic} properties of an orthonormal
  sequence of harmonic polynomials).
\newblock {\em Bull. Sci. Math., II. S{\'e}r.}, 113(2):239--251, 1989.

\bibitem{plesniak1984criterion}
Wieslaw Plesniak.
\newblock A criterion for polynomial conditions of leja’s type in cn.
\newblock {\em Univ. Iagel. Acta Math}, 24:139--142, 1984.

\bibitem{R95}
Thomas Ransford.
\newblock {\em Potential theory in the complex plane}, volume~28 of {\em Lond.
  Math. Soc. Stud. Texts}.
\newblock Cambridge: Univ. Press, 1995.

\bibitem{S62}
Jozef Siciak.
\newblock On some extremal functions and their applications in the theory of
  analytic functions of several complex variables.
\newblock {\em Trans. Am. Math. Soc.}, 105:322--357, 1962.

\bibitem{strohmaier2013algorithm}
Alexander Strohmaier and Ville Uski.
\newblock An algorithm for the computation of eigenvalues, spectral zeta
  functions and zeta-determinants on hyperbolic surfaces.
\newblock {\em Communications in Mathematical Physics}, 317:827--869, 2013.

\bibitem{T18}
Lloyd~N. Trefethen.
\newblock Series solution of {Laplace} problems.
\newblock {\em ANZIAM J.}, 60(1):1--26, 2018.

\bibitem{T24}
Lloyd~N Trefethen.
\newblock Polynomial and rational convergence rates for laplace problems on
  planar domains.
\newblock {\em Proceedings of the Royal Society A}, 480(2295):20240178, 2024.

\bibitem{W29}
J.~L. Walsh.
\newblock The approximation of harmonic functions by harmonic polynomials and
  by harmonic rational functions.
\newblock {\em Bull. Am. Math. Soc.}, 35:499--544, 1929.

\bibitem{W35}
J.~L. Walsh.
\newblock {\em Interpolation and approximation by rational functions in the
  complex domain.}, volume~20 of {\em Colloq. Publ., Am. Math. Soc.}
\newblock American Mathematical Society (AMS), Providence, RI, 1935.

\bibitem{walsh1937maximal}
J.~L. Walsh.
\newblock Maximal convergence of sequences of harmonic polynomials.
\newblock {\em Ann. Math. (2)}, 38:321--354, 1937.

\end{thebibliography}

\end{document}